\newcommand{\PP}{\mathcal{P}}
\newcommand{\RR}{\mathbb{R}}
\newcommand{\Ri}{\mathcal{R}}
\newcommand{\Z}{\mathbb{Z}}
\newcommand{\R}{\mathbb{R}}
\newcommand{\ZZ}{\mathbb{Z}}
\newcommand{\T}{\mathcal{T}}
\newcommand{\NN}{\mathbb{N}}
\newcommand{\NNs}{\mathbb{N}^*}
\newcommand{\MM}{(\mathcal{M})}
\newcommand{\bd}{\mathrm{bd}\,}
\newcommand{\argmax}{\mathrm{argmax}\,}
\newcommand{\dom}{\mathrm{dom}\,}
\newcommand{\rpiz}{\RR/2\pi\Z \,}
\newcommand{\dist}{\mathrm{dist}}
\newcommand{\inte}{\mathrm{int}\,}
\DeclareMathOperator*{\argmin}{argmin}
\newtheorem{theorem}{Theorem}
\newtheorem*{theorem*}{Theorem}
\newtheorem{lemma}{Lemma}
\newtheorem{proposition}{Proposition}
\newtheorem{corollary}{Corollary}
\newtheorem{definition}{Definition}
\newtheorem{assumption}{Assumption}
\newtheorem{remark}{Remark}
\renewenvironment{proof}[1][]{\noindent {\bf Proof #1:\;}}{\hfill $\Box$}
\title{Curiosities and counterexamples in smooth convex optimization}
\author{
   J\'{e}r\^{o}me Bolte\footnote{Toulouse School of Economics, Universit\'{e} Toulouse 1 Capitole,   France.}   \, and Edouard Pauwels\footnote{IRIT, ANITI, Universit\'e de Toulouse, CNRS. DEEL, IRT Saint Exupery, Toulouse, France.}
}
\begin{document}
\date{Draft of \today}

\maketitle

\begin{abstract}
Counterexamples to some old-standing optimization  problems in the smooth convex coercive setting are provided. We show that block-coordinate, steepest descent with exact search or Bregman descent methods do not generally converge. Other  failures of various desirable features are established: directional convergence of Cauchy's gradient curves, convergence of   Newton's flow, finite length of Tikhonov path, convergence of  central paths, or smooth Kurdyka-\L ojasiewicz inequality. All examples are  planar.\\
These examples are based on general smooth convex interpolation results. Given a decreasing sequence of  positively curved $C^k$ convex compact sets in the plane, we provide a level set interpolation of a $C^k$ smooth convex function where $k\geq2$ is arbitrary. If the intersection is reduced to one point our interpolant has positive definite Hessian, otherwise it is positive definite out of the solution set. Furthermore, given a sequence of decreasing polygons we provide an interpolant agreeing with the vertices and whose  gradients coincide with prescribed normals. 
\end{abstract}
\newpage
\tableofcontents
\newpage

\section{Introduction}

\subsection{Questions and method} One of the goals of convex optimization is to provide a solution to a problem with  stable and fast algorithms. The quality of a method is generally assessed by the  convergence of sequences, rate estimates, complexity bounds, finite length of relevant quantities and  other quantitative or qualitative ways.

Positive results in this direction are numerous and have been the object of intense research since decades. To name but a few: gradient methods e.g., \cite{newmirovsky1983problem,Nesterov,Boyd}, proximal methods e.g., \cite{PLC}, alternating methods e.g., \cite{Beck,wright2015coordinate}, path following methods e.g., \cite{Aus99,NN}, Tikhonov regularization e.g. \cite{Golub}, semi-algebraic optimization e.g., \cite{jon,BNPS}, decomposition methods e.g., \cite{PLC,Beck}, augmented Lagrangian methods e.g., \cite{Bertsek} and many others.

Despite this vast literature, some simple questions remain unanswered or just partly tackled, even for {\em smooth convex coercive} functions.
Does the alternating minimization method, aka Gauss-Seidel method, converge? Does the steepest descent method with exact line search converge? Do mirror descent or Bregman methods converge? Does Newton's flow converge? Do central paths converge? Is the gradient flow directionally stable? Do smooth convex functions have the Kurdyka-\L ojasiewicz property?

In this article we provide a negative answer to all  these questions.

Our work draws inspiration from early works of de Finetti \cite{definetti1949stratificazioni}, Fenchel \cite{fenchel51}, on convex interpolation, but also from  Torralba's PhD thesis \cite{torralba96} and the more recent \cite{bolte2010characterization}, where some counterexamples on the Tikhonov path and \L ojasiewicz inequalities are provided. The convex interpolation problem, see   \cite{definetti1949stratificazioni}, is as follows: given a monotone sequence of convex sets\footnote{In the sense of sets inclusion the sequence being indexed on $\NN$ or $\ZZ$} may we find a convex function interpolating each of these sets, i.e., having these sets as sublevel sets? Answers to these questions for {\em continuous} convex functions were provided by de Finetti, and improved by Fenchel \cite{fenchel51}, Kannai \cite{kannai77}, and then used in \cite{torralba96,bolte2010characterization}  for building counterexamples. 

We  improve this work by providing, for $k\geq 2$ arbitrary, a general $C^k$ interpolation theo\-rem for positively curved convex sets, imposing at the same time the positive definiteness of its Hessian out of the solution set. An abridged version could be as follows.
\begin{theorem*}[Smooth  convex interpolation\footnote{See Theorem \ref{th:smoothinterp} for the full version.}]
Let  $\left( T_i \right)_{i\in \ZZ}$ be a sequence of compact convex subsets of $\R^2$, with positively curved $C^k$ boundary, such that $T_i\subset\inte T_{i+1}$ for all $i$ in $\ZZ$. Then there exists a $C^k$ convex function $f$ having the $T_i$ as sublevel sets with positive definite Hessian outside of the set:  $$\argmin f=\bigcap_{i\in\ZZ} T_i.$$
\end{theorem*}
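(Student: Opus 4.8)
\emph{Strategy.} The plan is to prescribe the sublevel sets of $f$ as a smooth, strictly increasing family $(T_t)_{t\in I}$ of positively curved $C^k$ convex bodies interpolating the data — $T_t=T_i$ for $t$ equal to a suitable increasing sequence of levels $c_i$, on an interval $I$ which is all of $\RR$ in the ``fat'' case and merely a half-line when $\bigcap_iT_i$ is a single point (concavity, below, forces this) — and then to recover $f$ from the family; throughout I take $\bigcup_iT_i=\RR^2$, as in the full statement, so that $f$ is finite and coercive on $\RR^2$. Fix a base point $o\in\bigcap_iT_i$, interior to it in the main case, and encode each body by its support function $h_i$, a $C^k$ function on the circle $\RR/2\pi\Z$ obeying the \emph{linear} positive-curvature condition $h_i''+h_i>0$, with $T_i\subset\inte T_{i+1}$ reading $h_i<h_{i+1}$. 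Two things are needed from the interpolating family. First, for $C^k$ regularity of $f$: since $o$ is interior, $T_t$ is equally described by its radial function $\rho(t,\theta)>0$, which is $C^k$ and strictly increasing in $t$, so the relation $\rho(f(x),\theta)=r$ for $x=o+r\,u_\theta$ defines, by the implicit function theorem (which preserves the $C^k$ class since $\partial_t\rho>0$), a $C^k$ function $f$ on $\RR^2\setminus\bigcap_iT_i$ with $\{f\le t\}=T_t$; one then sets $f\equiv\inf f$ on $\bigcap_iT_i$. Second, for convexity: by the classical fact that $f$ built this way is convex iff $t\mapsto h_{T_t}(u)$ is concave for each $u$ (Minkowski-concavity of the sublevel sets), I will impose that $t\mapsto h_t(u)$ be \emph{strictly} concave — and it is this strictness, together with $h_t''+h_t>0$, that will force $\nabla^2f\succ0$ off $\argmin f$.

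\emph{Building the family.} The task reduces to constructing $h(t,\theta)$, jointly $C^k$, $2\pi$-periodic in $\theta$, with $h(c_i,\cdot)=h_i$, with $\partial_th>0$, $\partial^2_{tt}h<0$ and $h''_{\theta\theta}+h>0$, plus prescribed $C^k$-flat (resp.\ model) behaviour as $t\to\inf I$. The obstruction is that the naive affine interpolation $h_t=(1-\lambda(t))h_i+\lambda(t)h_{i+1}$ is not even $C^1$ across the levels $c_i$ for generic data: the one-sided $t$-derivatives there equal $a\,(h_i-h_{i-1})$ and $b\,(h_{i+1}-h_i)$ for positive constants $a,b$, which agree for all $\theta$ only in degenerate cases. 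Instead I would, near each $c_i$, fix a \emph{germ} of the foliation — a $C^k$ map $\Psi_i(t,\theta)$ on a short $t$-interval around $c_i$ with $\Psi_i(c_i,\cdot)=h_i$, a prescribed positive speed $\partial_t\Psi_i(c_i,\cdot)$, and $\partial^2_{tt}\Psi_i<0$, $(\Psi_i)''_{\theta\theta}+\Psi_i>0$ (open conditions, met by a small perturbation of the affine germ through $h_i$) — and then glue consecutive germs by a flat partition of unity in $t$: $h_t=\beta(t)\Psi_i(t,\cdot)+(1-\beta(t))\Psi_{i+1}(t,\cdot)$ on $[c_i,c_{i+1}]$, $\beta$ decreasing from $1$ to $0$ and flat at the ends. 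Near $c_i,c_{i+1}$ this is $C^k$ by construction; on the overlap, $\partial_th_t>0$ and $h''_{\theta\theta}+h_t>0$ are convex-combination estimates (favourable once the germs are narrow, so $\Psi_i<\Psi_{i+1}$ and $\beta'(\Psi_i-\Psi_{i+1})$ has the right sign), while keeping \emph{strict} concavity in $t$ is delicate, the blending term $\beta''(\Psi_i-\Psi_{i+1})$ working against $\beta\,\partial^2_{tt}\Psi_i+(1-\beta)\,\partial^2_{tt}\Psi_{i+1}$ and needing to be dominated by it — arranged by taking the germs sufficiently concave in $t$ on their short intervals. An analogous germ at $\bigcap_iT_i$ handles $t\to\inf I$: a $C^k$-flat one in the fat case (so that $\partial_th_t\to\infty$ there), a model (asymptotically elliptic, for the full theorem) one in the singleton case.

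\emph{Conclusion and the Hessian.} Granting such a family, the reconstruction gives $f\in C^k(\RR^2)$ with $\{f\le c_i\}=T_i$, $\argmin f=\bigcap_iT_i$, and $f$ convex. For positive definiteness of $\nabla^2f$ off $\argmin f$, the cleanest route is the Legendre conjugate $f^*(u)=\sup_t\,(h_t(u)-t)$: by strict concavity the maximiser $\tau(u)$ is unique, interior for every $u\ne0$ (thanks to $\partial_th_t\to\infty$ at $\inf I$), and smooth, so $f^*=h(\tau(\cdot),\cdot)-\tau$ and, at $(\tau(u),u)$, the second-order envelope (Danskin) formula reads
\[
\nabla^2f^*(u)=\nabla^2_{uu}h-\frac{(\nabla_u\partial_th)(\nabla_u\partial_th)^{\!\top}}{\partial^2_{tt}h},
\]
a sum of two positive semidefinite matrices: the support-function Hessian $\nabla^2_{uu}h$, whose kernel is $\RR u$ and whose transversal eigenvalue is the positive quantity $h''_{\theta\theta}+h$; and the rank-one term, positive because $\partial^2_{tt}h<0$, with kernel $(\nabla_u\partial_th)^{\perp}$. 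These kernels meet only at $0$, since Euler's identity gives $\langle\nabla_u\partial_th,u\rangle=\partial_th=1$ at the maximiser, so $\nabla^2f^*(u)\succ0$, whence $\nabla^2f(x)=[\nabla^2f^*(\nabla f(x))]^{-1}\succ0$ wherever $\nabla f(x)\ne0$, i.e.\ on $\RR^2\setminus\argmin f$. (Equivalently, and covering $k=2$ where $f^*$ is barely differentiable, argue in a frame adapted to a level curve: the tangential Hessian is $|\nabla f|$ times the curve's curvature, and the transversal one is forced positive by strict concavity in $t$.)

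\emph{Main obstacle.} The crux — where the genuine work lies — is the construction of the family: making the interpolation $C^k$ across the infinitely many prescribed level curves $\partial T_i$ forces a ``flat'' blending in the level variable, yet the family must stay \emph{strictly} monotone and \emph{strictly} Minkowski-concave in that same variable for the reconstruction to be $C^k$ and its Hessian positive definite, and the blending fights strict concavity; controlling this trade-off — and, relatedly, choosing the speeds $\partial_t\Psi_i$ so that $f$ is genuinely convex rather than merely quasiconvex — is the heart of the matter. A secondary difficulty is the asymptotic analysis at $\argmin f$: in the singleton case of the full theorem, positive definiteness of the Hessian at the minimizer itself requires the innermost bodies $T_i$ to converge, after rescaling, to a fixed ellipse, a genuine restriction on the data which the construction must exploit.
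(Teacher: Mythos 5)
Your overall frame (prescribe a jointly $C^k$ family of support functions $h(t,\theta)$ with $\partial_t h>0$, concave in $t$ and $h''_{\theta\theta}+h>0$, reconstruct $f$ by inversion, get convexity from the de Finetti--Fenchel--Crouzeix criterion, and read off $\nabla^2 f$ from a Danskin-type formula for $f^*$) is close in spirit to the paper's, and the conjugate/Danskin computation is a legitimate alternative to the paper's direct computation of the Hessian through its parametrization $G$. But the proposal has a genuine gap exactly at the point you yourself call ``the heart of the matter'': the existence of the interpolating family is asserted, not proved, and the two difficulties you leave open are the ones requiring real work. First, you never say how the levels $c_i$ are chosen. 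Concavity of $t\mapsto h_t(u)$ with $h_{c_i}=h_i$ forces the difference quotients $\bigl(h_{i+1}(u)-h_i(u)\bigr)/(c_{i+1}-c_i)$ to be nonincreasing in $i$ \emph{simultaneously for every direction} $u$, which fails for a generic choice of levels; the paper resolves this by its value-assignation step, $\lambda_{i+1}-\lambda_i=K_i(\lambda_i-\lambda_{i-1})$ with $K_i=\max_{\|x\|=1}\frac{\sigma_{S_{i+1}}(x)-\sigma_{S_i}(x)}{\sigma_{S_i}(x)-\sigma_{S_{i-1}}(x)}$, and without some such device your family cannot even be concave in $t$, let alone strictly so. Second, your gluing by a flat cutoff $\beta$ between germs $\Psi_i,\Psi_{i+1}$ introduces into $\partial^2_{tt}h$ the terms $2\beta'(\partial_t\Psi_i-\partial_t\Psi_{i+1})+\beta''(\Psi_i-\Psi_{i+1})$, which must be dominated at infinitely many junctions while keeping $\partial_t h>0$ and $h''_{\theta\theta}+h>0$; ``take the germs sufficiently concave on short intervals'' is not an argument, since sharpening the germ increases the jump of $\partial_t$ fed into the very same cross terms at the next junction. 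This uniform infinite-junction control is precisely what the paper's machinery delivers: it preconditions with the homothets $(1\pm\epsilon_i)T_i$ to prescribe first-order data at each junction, interpolates the support values by a concave piecewise-affine function of $\lambda$, and replaces the partition of unity by a Bernstein approximation, which is shape preserving (keeps monotonicity and concavity), reproduces the prescribed first derivative at the junctions and annihilates the derivatives of orders $2,\dots,k$ there, so the pieces glue to a $C^k$ family with no cutoff at all.

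A further, fixable, point: you insist on strict concavity $\partial^2_{tt}h<0$ because your duality argument needs the rank-one term, but the paper shows this is unnecessary. Plain concavity suffices for convexity of $f$, and positive definiteness off $\argmin f$ is recovered by post-composing with $g(t)=\sqrt{t^2+1}+t$ (after the $C^k$ flattening near the solution set when $I=\ZZ$), since $\nabla^2(g\circ f)=g'\nabla^2 f+g''\nabla f\nabla f^T$ and tangential positivity already follows from the positive curvature of the level sets; dropping strictness would ease your blending problem but would not close the two gaps above. Also, your closing remark that positive definiteness \emph{at} the minimizer requires the rescaled inner bodies to converge to an ellipse addresses a stronger conclusion than the quoted statement, which only asks for positive definiteness outside $\argmin f$.
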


We provide several additional tools (derivatives computations)  and variants (status of the solution set, Legendre functions, Lipschitz continuity). Whether our result is generalizable to general smooth convex sequences, i.e.,  with vanishing curvature,  seems to be a very delicate question whose answer might well be negative.

Our central theoretical result is complemented by a discrete approximate interpolation result ``of order one" which is particularly well adapted for building counterexamples. Given a nested collection of polygons, one can indeed build a smooth convex function having level sets interpolating its vertices and whose gradients agree with  prescribed normals.   

Our results are obtained by blending parametrization techniques, Minkovski summation, Bernstein approximations and convex analysis.

As sketched below, our results offer the possibility of building counterexamples in convex  optimization  by restricting oneself to the construction of  countable collections of nested convex sets satisfying some desirable properties. In all cases  failures of good properties are  caused by some curvature oscillations.

\subsection{A digest of counterexamples}
 Counterexamples provided in this article can be classified along three axes: structural counterexamples\footnote{By structural, we include  homotopic deformations by mere summation}, counterexamples for convex optimization algorithms and ordinary differential equations.

 In the following, the term ``nonconverging'' sequence or trajectory means, a sequence or a trajectory with at least two distinct accumulation points. Unless otherwise stated, convex functions  have full domain.

\medskip

{\em The following results are proved for  $C^k$ convex functions on the plane with $k\geq 2$.}
\paragraph{Structural counterexamples} 
\begin{itemize}
    \item[(i)] \textbf{Kurdyka-\L ojasiewicz:} There exists a $C^k$ convex function  whose Hessian is po\-si\-tive definite outside its solution set and which does not satisfy the Kurdyka-\L ojasiewicz inequality. This is an improvement on  \cite{bolte2010characterization}.
    \item[(ii)] \textbf{Tikhonov regularization path:} There exists a $C^k$ convex function $f$ such that the regularization path 
    \begin{align*}
        x(r)= \argmin\left\{ f(y) + r \|y\|^2:y\in \R^2\right\}, \,\,r\in(0,1)
    \end{align*}
    has infinite length. This  strengthens  a theorem by Torralba \cite{torralba96}.
    \item[(iii)] \textbf{Central path:} 
    There exists a continuous Legendre function $h \colon [-1,1]^2 \mapsto \RR$,  $C^k$ on the interior, and $c$ in $\RR^2$ such that the central path
    \begin{align*}
        x(r) =
    \argmin \left\{ \left\langle c, y \right\rangle + r h(y):y\in D\right\}
    \end{align*}
    does not have a limit as $r \to 0$.
\end{itemize}

\paragraph{Algorithmic counterexamples:}

\begin{itemize}
    \item[(iv)] \textbf{Gauss-Seidel method (block coordinate descent):} There exists a $C^k$ convex function with  positive definite Hessian outside its solution set and an initialization $ (u_0,v_0)$ in $\RR^2$, such that the alternating minimization algorithm
    \begin{align*}
        u_{i+1} &= \argmin_{u \in \RR} f(u, v_i) \\
        v_{i+1} &= \argmin_{v \in \RR}f(u_{i+1}, v) 
    \end{align*}
    produces a bounded nonconverging sequence $((u_i,v_i))_{i\in \NN}$.
    \item[(v)] \textbf{Gradient descent with exact line search:} There exists a $C^k$ convex function $f$ with  positive definite Hessian outside its solution set and an initialization $x_0$ in $\RR^2$, such that the gradient descent algorithm with exact line search
    \begin{align*}
        x_{i+1} &= \argmin_{t \in \RR} f(x_i + t \nabla f(x_i))
    \end{align*}
    produces a bounded nonconvergent sequence.
    \item[(vi)] \textbf{Bregman or mirror descent method:} There exists a continuous Legendre function $h \colon [-1,1]^2 \mapsto \RR$, $C^k$ on the interior, a vector $c$ in $\RR^2$ and an initialization $x_0$ in $(-1,1)^2$ such that the Bregman recursion
    \begin{align*}
        x_{i+1} = \nabla h^*(\nabla h(x_i) - c)
    \end{align*}
    produces a nonconverging sequence. The couple $(h,\langle c,\cdot\rangle$) satisfies the smoothness  properties introduced in \cite{bauschke2016descent}.
\end{itemize}

\paragraph{Continuous time ODE counterexamples:}
\begin{itemize}
    \item[(vii)] \textbf{Continuous time Newton method:} There exists a $C^k$ convex function with positive definite Hessian outside its solution set, and an initialization $x_0$ in $\RR^2$ such that the continuous time Newton's system
    \begin{align*}
        \dot{x}(t) &= - \left[\nabla^2 f(x(t))\right]^{-1} \nabla f(x(t)), \,\,t\geq 0,\\
        x(0) &= x_0
    \end{align*}
    has a  solution approaching $\argmin f$  which does not converge.
    \item[(viii)] \textbf{Directional convergence for gradient curves:} There exists a $C^k$ convex function with $0$ as a unique minimizer and a positive definite Hessian on $\R^2\setminus\{0\}$, such that for any non stationary solution to the gradient system
    \begin{align*}
        \dot{x}(t) &= - \nabla f(x(t)) 
    \end{align*}
    the direction $x(t) / \|x(t)\|$ does not converge.
    \item[(ix)] \textbf{Hessian Riemannian gradient dynamics:} There exists a continuous Legendre function $h \colon [-1,1]^2 \mapsto \RR$, $C^k$ on the interior, a linear function $f$ and a nonconvergent solution to the following system
    \begin{align*}
        \dot{x}(t) = - \nabla_H f(x(t)),
    \end{align*}
    where $H = \nabla^2 h$ is the Hessian of $h$ and $\nabla_H f = H^{-1}  \nabla f$ is the gradient of $f$ in the Riemannian metric induced by $H$ on $(-1,1)^2$.
\end{itemize}

\paragraph{Pathological sequences and curves}
Our counterexamples lead to sequences or paths in $\RR^2$ which are related to a function $f$ by a certain property (see examples above) and have a certain type of pathology. For illustration purposes, we provide sketches of the pathological behaviors we met in~Figure \ref{fig:pathologicalbehaviors}.
\begin{figure}[H]
    \centering
    \includegraphics[width = \textwidth]{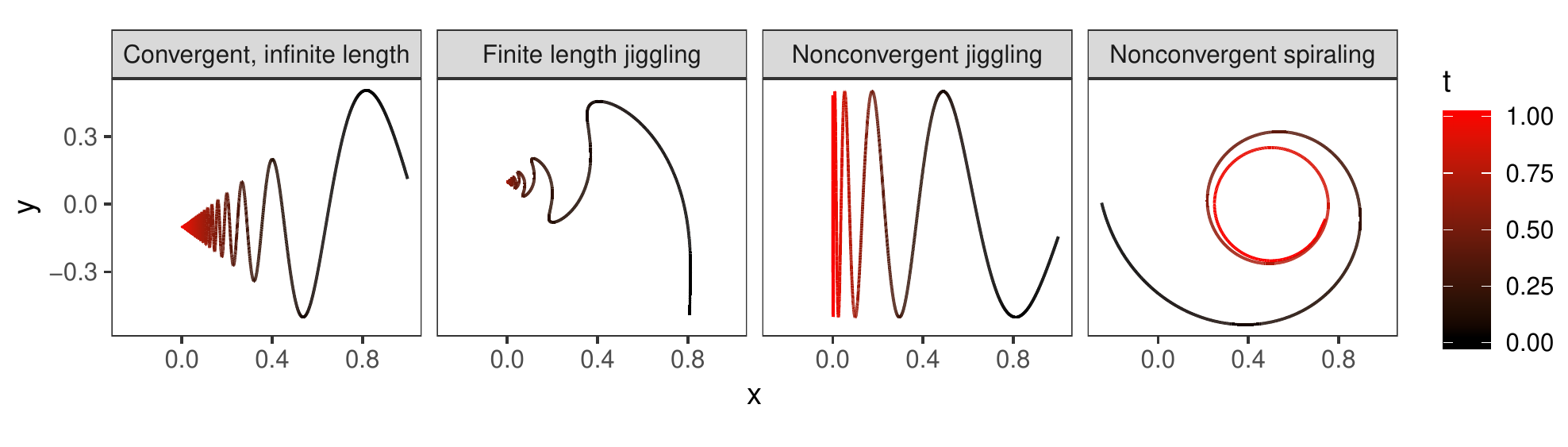}
    \caption{Rough sketches of pathological behavior for curves in the plane; for sequences similar figures would be obtained. Red colors indicate proximity with the solution set. Convergence with infinite length corresponds to counterexample (ii), finite length jiggling corresponds to counterexample (viii) (recall that gradient curves have to be self-contractant as well, see \cite{dan}), nonconvergent jiggling corresponds to counterexamples (iii), (vi), (vii), (ix) and nonconvergent spiraling corresponds to counterexamples (iv), (v) and somehow (i). }
    \label{fig:pathologicalbehaviors}
\end{figure}

\section{Preliminaries}

 Let us set $\NNs=\NN\setminus\{0\}$. For $p$ in $\NN^*$, the Euclidean scalar product in  $\R^p$ is denoted by $\langle\cdot,\cdot\rangle$, otherwise stated the norm is denoted by $\|\cdot\|$. 
Given subsets $S,T$ in $\R^p$, and $x$ in $\R^p$,  we define
$$\dist(x,S):=\inf \left\{\|x-y\|:y\in S\right\},$$ and the Hausdorff distance between $S$ and $T$,
$$\dist(S,T)=\max\left(\sup_{x\in S}\dist(x,T),\sup_{x\in T}\dist(x,S)\right).$$

Throughout this note, the assertion ``$g$ is $C^k$ on $D$'' where $D$ is not an open set is to be understood as ``$g$ is $C^k$ on an open neighborhood of $D$''. Given a map $G \colon X \mapsto A \times B$ for some space $X$, $[G]_1 \colon X \mapsto A$ denotes the first component of $G$.

\subsection{Continuous convex interpolations}
We consider a sequence of compact convex subsets of $\RR^p$, $\left(T_i \right)_{i \in \Z}$ such that  $T_{i+1} \subset \mathrm{int}\  T_i$. 
Finding a continuous convex interpolation of $\left(T_i \right)_{i \in \Z}$   is finding a convex continuous function which makes the $T_i$ a sequence of level sets. We call this process {\em continuous convex interpolation}. This questioning was present in Fenchel \cite{fenchel51} and dates back to de Finetti \cite{definetti1949stratificazioni}, let us also mention the work of Crouzeix \cite{crouzeix80} revolving around this issue.

Such constructions have been shown to be realizable by Torralba \cite{torralba96}, Kannai \cite{kannai77},  using ideas based on Minkowski sum. The validity of this construction can be proved easily using the result of Crouzeix \cite{crouzeix80} which was already present under  different and weaker forms in the works of de Finetti and Fenchel.

\begin{theorem}[de Finetti-Fenchel-Crouzeix]
				Let $f \colon \RR^p \to \RR$ be a quasiconvex function. The functions
				\begin{align*}
								F_x \colon \lambda \mapsto \sup\left\{  \langle z,x\rangle: f(z)\leq \lambda\right\}
				\end{align*}
				are concave for all $x$ in $\R^p$, if and only $f$ is convex.
				\label{th:crouzeix}
\end{theorem}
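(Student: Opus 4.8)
The plan is to establish the two implications of Crouzeix's characterization separately, working with the family $F_x(\lambda) = \sup\{\langle z,x\rangle : f(z)\le \lambda\}$, which for each fixed $x$ is a nondecreasing function of $\lambda$ taking values in $[-\infty,+\infty]$. Throughout one keeps in mind that $F_x(\lambda)$ is nothing but the support function of the sublevel set $[f\le\lambda]$ evaluated at $x$, so all the machinery of support functions of convex sets is available once convexity of $f$ is known, and conversely the sublevel sets can be recovered from the $F_x$ by intersecting the half-spaces $\{z : \langle z, x\rangle \le F_x(\lambda)\}$ over all $x$.

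For the ``only if'' direction (the easy one), assume $f$ is convex and hence each sublevel set $S_\lambda := [f\le\lambda]$ is convex; I want to show $\lambda \mapsto F_x(\lambda)$ is concave for each fixed $x$. Fix $\lambda_0,\lambda_1$ and $t\in[0,1]$, set $\lambda_t = (1-t)\lambda_0 + t\lambda_1$, and take points $z_0\in S_{\lambda_0}$, $z_1\in S_{\lambda_1}$ nearly achieving the respective suprema (or exactly, on a dense set of directions, by compactness of the sublevel sets — here coercivity/compactness, available in our setting, makes this clean). Convexity of $f$ gives $f((1-t)z_0 + tz_1) \le (1-t)\lambda_0 + t\lambda_1 = \lambda_t$, so $(1-t)z_0 + tz_1 \in S_{\lambda_t}$, whence $F_x(\lambda_t) \ge \langle (1-t)z_0 + tz_1, x\rangle = (1-t)\langle z_0,x\rangle + t\langle z_1,x\rangle$. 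Passing to the supremum over $z_0$ and $z_1$ yields $F_x(\lambda_t) \ge (1-t)F_x(\lambda_0) + tF_x(\lambda_1)$, which is concavity.

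For the ``if'' direction, assume every $F_x$ is concave and that $f$ is quasiconvex, so each $S_\lambda$ is convex, closed, and (being a sublevel set of the functions in our setting) compact; hence $F_x(\lambda) = \sigma_{S_\lambda}(x)$ is the support function, finite-valued, and $S_\lambda = \bigcap_x \{z : \langle z,x\rangle \le F_x(\lambda)\}$. I want to show $f$ is convex, i.e. that its epigraph is convex, equivalently that $f((1-t)z_0 + tz_1) \le (1-t)f(z_0) + tf(z_1)$ for all $z_0,z_1$. Write $\lambda_i = f(z_i)$ and $\lambda_t = (1-t)\lambda_0 + t\lambda_1$; it suffices to check $(1-t)z_0 + tz_1 \in S_{\lambda_t}$, and for that it suffices to check, for every direction $x$, that $\langle (1-t)z_0 + tz_1, x\rangle \le F_x(\lambda_t)$. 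But $\langle z_i, x\rangle \le \sigma_{S_{\lambda_i}}(x) = F_x(\lambda_i)$ since $z_i \in S_{\lambda_i}$, and concavity of $F_x$ gives $F_x(\lambda_t) \ge (1-t)F_x(\lambda_0) + tF_x(\lambda_1) \ge (1-t)\langle z_0,x\rangle + t\langle z_1,x\rangle = \langle (1-t)z_0+tz_1, x\rangle$. This is exactly what was needed, so $(1-t)z_0+tz_1\in S_{\lambda_t}$ and $f$ is convex.

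The main obstacle — and the only place requiring care — is the handling of infinite values and non-compact sublevel sets in the fully general quasiconvex statement: $F_x(\lambda)$ can be $+\infty$ (if $S_\lambda$ is unbounded in direction $x$) or the sup may fail to be attained. The clean way around this is to note concavity statements are stable under pointwise suprema of affine-in-$\lambda$ quantities and to argue with $\varepsilon$-maximizers rather than exact maximizers, or alternatively to first reduce to the coercive/compact case relevant to this paper (where $S_\lambda$ is always compact and the sup is attained), which is all we shall actually use. I would present the compact case as the main argument and remark that the general case follows by the same computation with suprema in place of maxima, citing Crouzeix \cite{crouzeix80} for the delicate bookkeeping.
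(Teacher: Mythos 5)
The paper itself does not prove this statement (it is quoted as the classical de Finetti--Fenchel--Crouzeix result), so your proposal can only be judged on its own merits. Your ``only if'' direction is correct and complete up to routine bookkeeping with $\varepsilon$-maximizers and infinite values. The genuine gap is in the ``if'' direction: you invoke $S_{\lambda_t}=\bigcap_x\{z:\langle z,x\rangle\le F_x(\lambda_t)\}$, which is legitimate only when the sublevel set is \emph{closed}. Quasiconvexity of $f$ gives convexity of $S_{\lambda_t}$, but the theorem assumes neither lower semicontinuity nor continuity of $f$, and certainly not compactness of sublevel sets (your parenthetical ``being a sublevel set of the functions in our setting'' imports hypotheses absent from the statement; even for convex $f$ the sublevel sets are typically unbounded). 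What your computation actually shows is $(1-t)z_0+tz_1\in\overline{S_{\lambda_t}}$, which bounds a lower-semicontinuous envelope of $f$ at that point, not $f$ itself. Your proposed repair --- ``reduce to the coercive/compact case'' and cite Crouzeix ``for the delicate bookkeeping'' --- is not a reduction (no argument is given that the general case follows from the compact one), and citing Crouzeix is circular, since the statement being proved \emph{is} Crouzeix's theorem.

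The gap is fillable with the hypotheses at hand, and the fix is precisely where concavity, rather than compactness, must do the work. Suppose $w:=(1-t)z_0+tz_1\in\overline{S_{\lambda_t}}$ but $f(w)>\lambda_t$, and pick $\lambda\in(\lambda_t,f(w))$. Then $w\in\overline{S_\lambda}\setminus S_\lambda$; since for a convex set $C$ one has $\inte\overline{C}=\inte C\subset C$, the point $w$ lies on the boundary of the closed convex set $\overline{S_\lambda}$, so there is $x\neq 0$ with $\sigma_{S_\lambda}(x)=\langle w,x\rangle$, i.e. $F_x(\lambda)=\langle w,x\rangle$; moreover $F_x(\lambda_t)\ge\langle w,x\rangle$ because $w\in\overline{S_{\lambda_t}}$, while $F_x(\lambda_t)\le F_x(\lambda)$ by monotonicity, so $F_x(\lambda_t)=F_x(\lambda)=\langle w,x\rangle$ with $\lambda_t<\lambda$. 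Concavity then forces $F_x(\nu)\le\langle w,x\rangle$ for every $\nu>\lambda$ (write $\lambda$ as a convex combination of $\lambda_t$ and $\nu$). But $f$ is finite everywhere, so taking $\nu\ge\max\{\lambda,f(w+x)\}$ gives $w+x\in S_\nu$ and $F_x(\nu)\ge\langle w,x\rangle+\|x\|^2>\langle w,x\rangle$, a contradiction; hence $f(w)\le\lambda_t$ after all. Note that finiteness of $f$ on all of $\RR^p$ is essential to this step. With this lemma inserted in place of your appeal to closedness/compactness, your overall scheme becomes a complete proof of the stated equivalence.
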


Our goal is to build smooth convex interpolation for sequences of smooth convex sets. To make such a construction we shall  use {\em nonlinear} {\em Minkowski} {\em interpolation} bet\-ween level sets. 

We shall also rely on Bernstein approximation which we now describe.

\subsection{Bernstein approximation}

We refer to the monograph \cite{lorentz1954bernstein} by G.G. Lorentz. 

The main properties of Bernstein polynomials to be used in this paper are the following:
\begin{itemize}
				\item[---] Bernstein approximation is linear in its functional argument $f$ and ``monotone'' which allows to construct an approximation using only positive combination of a finite number of function values.
				\item[---] There are precise formulas for derivatives of Bernstein approximation. They involve repeated finite differences. So approximating piecewise affine function with high enough degree leads to an approximation for which corner values of derivatives are controlled while the remaining derivatives are vanishing (up to a given order).
				\item[---] Bernstein approximation is shape preserving, which means in particular that approximating a concave function preserves concavity.
\end{itemize}

The main idea to produce a smooth interpolation which preserves level sets is depicted in  Figure \ref{fig:ideaBernstein} where we use Bernstein approximation to interpolate smoothly between three points  and controlling the successive derivatives at the end points of the interpolation.

\begin{figure}[H]
				\centering
				\includegraphics[width = .6\textwidth]{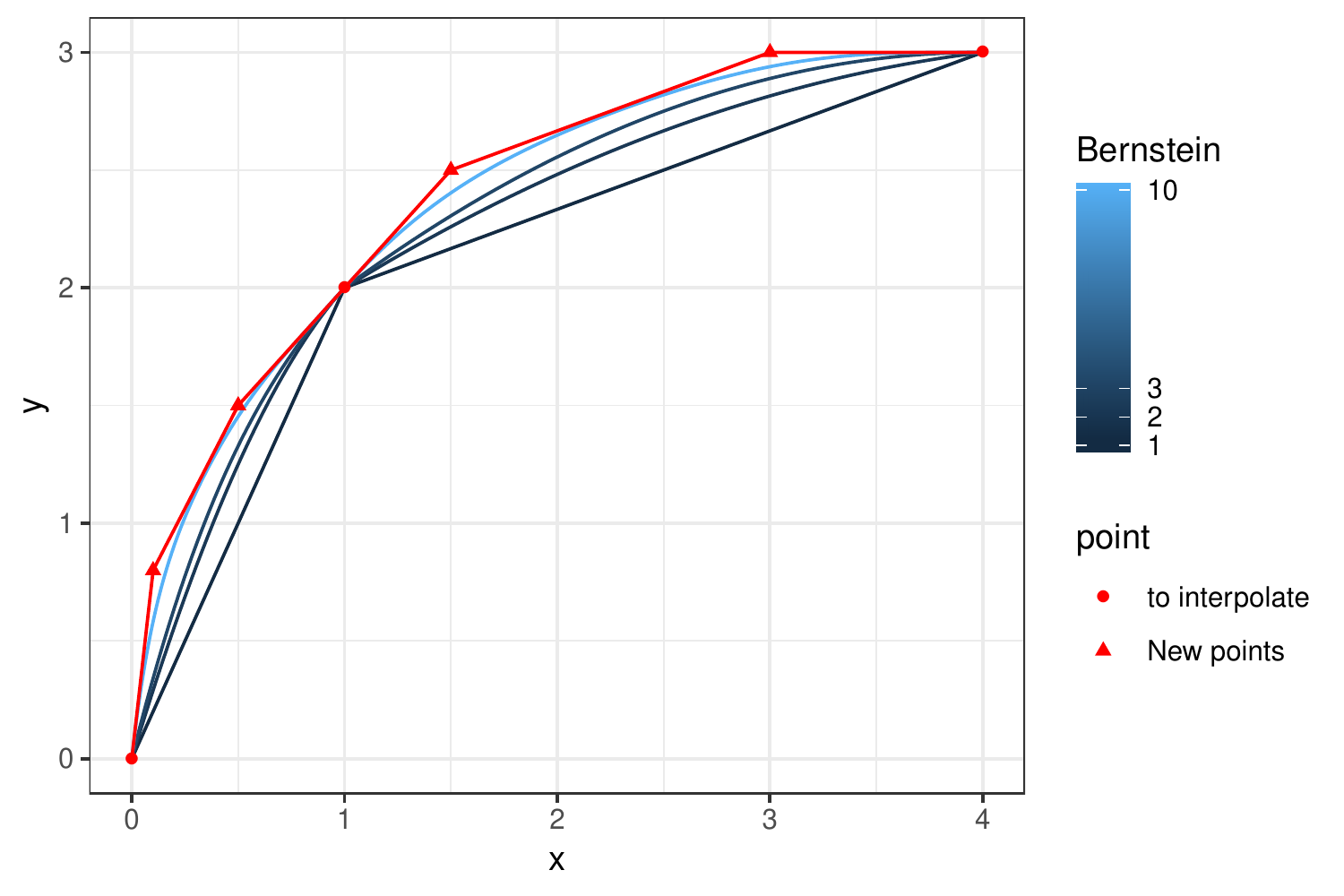}
				\caption{Illustration of Bernstein's smooth interpolation. We consider at first three points, to which we add four extra points to control derivatives at the junctions. We then build a concave, piecewise linear function and make a Bernstein approximation between the first two original points and the last two original points.}
				\label{fig:ideaBernstein}
\end{figure}

Let us now be specific. Given $f$ defined on the interval $[0,1]$, the {\em Bernstein polynomial} of order $d \in \NNs$ associated to $f$ is given by
\begin{align}
				B_d(x) = B_{d,f}(x) = \sum_{k=0}^d f\left( \frac{k}{d} \right) {d \choose k} x^k(1-x)^{d-k}, \mbox{ for $x\in [0,1].$}
				\label{eq:bernsteinPolynomial}
\end{align}

\paragraph{Derivatives and shape preservation:}
For any $h $ in $ (0,1)$ and $x $ in $ [0,1-h]$, we set $\Delta_h^1 f(x) = f(x+h) - f(x)$ and recursively for all $k $ in $ \NNs$, $\Delta_h^k f(x) = \Delta\left( \Delta_h^{k-1} f(x) \right)$. We fix $d \neq0$ in $\NN$ and for $h = \frac{1}{d}$ write $\Delta_h^k = \Delta^k$. Then for any $m \leq d$, we have
\begin{align}
				B_d(x)^{(m)} &= d(d-1)\ldots(d-m+1) \sum_{k=0}^{d - m} \Delta^m f\left( \frac{k}{d} \right) {{d-m} \choose k} x^k(1-x)^{d-k-m},
				\label{eq:bernsteinDerivative}
\end{align}
for any $x$ in $[0,1]$. If $f$ is increasing (resp. strictly increasing), then $\Delta^1f(x) \geq 0$ (resp. $\Delta^1f(x) > 0$) for all $x$ and $B'_d$ is positive (resp. strictly positive) and $B_d$ is increasing (resp. strictly increasing). Similarly, if $f$ is concave, then $\Delta^2 f(x) \leq 0$ for all $x$ so that $B^{(2)}_d \leq 0$ and $B_d$ is concave. 
From \eqref{eq:bernsteinDerivative}, we infer
\begin{equation}\label{e:born}
  \left|B_d(x)^{(m)}\right|\leq d(d-1)\ldots(d-m+1)\sup_{k\in\{0,\ldots,d-m\}}  \left|\Delta^m f\left( \frac{k}{d} \right)\right|
\end{equation}
for $x$ in $[0,1]$.
\paragraph{Approximation of  piecewise affine functions:} The following lemma will be extensively used throughout the proofs.
\begin{lemma}[Smoothing of piecewise lines in $\R^p$] \label{lem:convexInterpol}
				Let $q_0,q_1 \in \RR^p$, $\lambda_-<\lambda_0< \lambda_1 < \lambda_+$ and $0<e_1,e_0<1$. Set $\Theta = \left( q_0,q_1,,\lambda_-,\lambda_0,\lambda_1,\lambda_+, e_0,e_1 \right)$ and	define $\gamma_\Theta \colon [0,1] \mapsto \RR^p$ through 
				\begin{align*}\small
								\gamma_\Theta(t)=
								\begin{cases}
												q_0 \left( 1 + \frac{e_0}{\lambda_0 - \lambda_-} \left( t (\lambda_+- \lambda_-) \right) \right) & \text{ if } 0\leq t \leq \frac{\lambda_0 - \lambda_-}{\lambda_+ - \lambda_-} \\
												q_0(1 + e_0) \left( \frac{\lambda_1 - \lambda_- - t (\lambda_+ - \lambda_-)}{\lambda_1 - \lambda_0} \right) + q_1(1 - e_1) \left( \frac{ \lambda_- - \lambda_0  + t (\lambda_+ - \lambda_-)}{\lambda_1 - \lambda_0} \right)& \text{ if } \frac{\lambda_0 - \lambda_-}{\lambda_+ - \lambda_-} \leq t \leq \frac{\lambda_1 - \lambda_-}{\lambda_+ - \lambda_-} \\
												q_1\left( 1 + \frac{(t-1)(\lambda_+-\lambda_-)e_1}{\lambda_+ - \lambda_1} \right) & \text{ if } \frac{\lambda_1 - \lambda_-}{\lambda_+ - \lambda_-}  \leq t \leq 1.
								\end{cases}
				\end{align*}
				The curve $\gamma_\Theta$ in $\RR^{p+1}$ is the affine interpolant between the points $q_0$, $ (1 + e_0) q_0$, $(1 - e_1) q_1$ and $ q_1$. For any $m $ in $ \NN$, we choose  $d$ in $\NN^*$ such that
				\begin{align}
	\label{e:deg}	\frac{m}{d} \leq \min\left\{ \frac{\lambda_0 - \lambda_-}{\lambda_+ - \lambda_-}, 1 - \frac{\lambda_1 - \lambda_-}{\lambda_+ - \lambda_-} \right\}.
				\end{align}
				We consider a Bernstein-like reparametrization of $\tilde\gamma_\Theta $ given by 
				\begin{align*}
								\tilde\gamma_\Theta \colon [\lambda_-,\lambda_+] &\mapsto \RR^p\\
								\lambda & \mapsto \sum_{k=0}^d \tilde\gamma_\Theta \left( \frac{k}{d} \right){d \choose k} \left(\frac{\lambda - \lambda_-}{\lambda_+ - \lambda_-}  \right)^k\left(1 - \frac{\lambda - \lambda_-}{\lambda_+ - \lambda_-}  \right)^{d-k}.
				\end{align*}
				Then the following holds, for any $2 \leq l \leq m$, $\tilde\gamma_\Theta $ is $C^m$ and
				\begin{align*}
								\tilde\gamma_\Theta (\lambda_-)&=q_0&
								\tilde\gamma_\Theta (\lambda_+)&=q_1\\
								\tilde\gamma_\Theta '(\lambda_-)&=\frac{e_0}{\lambda_0- \lambda-} q_0&
								\tilde\gamma_\Theta '(\lambda_+)&=\frac{e_1}{\lambda_+ - \lambda_1} q_1\\
								\tilde\gamma_\Theta ^{(l)}(\lambda_-) &=0  &\tilde\gamma_\Theta ^{(l)}(\lambda_+) &= 0.
				\end{align*}
				Furthermore, if $\gamma_\Theta $ has monotone coordinates (resp. strictly monotone, resp. concave, resp. convex), then so has $\tilde\gamma_\Theta $.
				
\end{lemma}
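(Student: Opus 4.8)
The plan is to reduce the statement to a coordinatewise claim about the classical Bernstein operator $B_d$ of \eqref{eq:bernsteinPolynomial} and then to exploit that $\gamma_\Theta$ is \emph{affine} on its first and third pieces, so that the finite differences at the interpolation nodes lying near the endpoints are as simple as possible. Write $\phi\colon\lambda\mapsto x:=(\lambda-\lambda_-)/(\lambda_+-\lambda_-)$, an increasing affine bijection of $[\lambda_-,\lambda_+]$ onto $[0,1]$, so that componentwise $\tilde\gamma_\Theta=B_{d,\gamma_\Theta}\circ\phi$. In particular each coordinate of $\tilde\gamma_\Theta$ is a polynomial of degree at most $d$, hence $C^\infty$ and a fortiori $C^m$. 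Since the Bernstein basis polynomial $\binom{d}{k}x^k(1-x)^{d-k}$ equals $1$ at $x=0$ when $k=0$ and vanishes there otherwise, and equals $1$ at $x=1$ when $k=d$ and vanishes there otherwise, one immediately gets $\tilde\gamma_\Theta(\lambda_-)=\gamma_\Theta(0)=q_0$ and $\tilde\gamma_\Theta(\lambda_+)=\gamma_\Theta(1)=q_1$. By the chain rule $\tilde\gamma_\Theta^{(l)}=(\lambda_+-\lambda_-)^{-l}\,(B_{d,\gamma_\Theta}^{(l)}\circ\phi)$ for $l\le d$, so it remains to evaluate $B_{d,\gamma_\Theta}^{(l)}$ at $0$ and $1$ for $1\le l\le m$.

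The key point is the following. Evaluating the derivative formula \eqref{eq:bernsteinDerivative} at $x=0$ retains only the $k=0$ summand and at $x=1$ only the $k=d-l$ summand, whence
\[
B_{d,\gamma_\Theta}^{(l)}(0)=\frac{d!}{(d-l)!}\,\Delta^l\gamma_\Theta(0),\qquad B_{d,\gamma_\Theta}^{(l)}(1)=\frac{d!}{(d-l)!}\,\Delta^l\gamma_\Theta\Big(\tfrac{d-l}{d}\Big),
\]
where $\Delta^l$ denotes the $l$-fold forward difference of step $1/d$. Now $\Delta^l\gamma_\Theta(0)$ involves $\gamma_\Theta$ only at the nodes $0,\tfrac1d,\dots,\tfrac ld$ and $\Delta^l\gamma_\Theta(\tfrac{d-l}{d})$ only at $\tfrac{d-l}{d},\dots,1$, and condition \eqref{e:deg} is precisely the assertion that, for every $l\le m$, the nodes $0,\dots,\tfrac ld$ lie in the first sub-interval $[0,(\lambda_0-\lambda_-)/(\lambda_+-\lambda_-)]$ and the nodes $\tfrac{d-l}{d},\dots,1$ lie in the third sub-interval $[(\lambda_1-\lambda_-)/(\lambda_+-\lambda_-),1]$ of the definition of $\gamma_\Theta$. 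On the first sub-interval $\gamma_\Theta$ is the affine map $t\mapsto q_0+q_0\,\tfrac{e_0(\lambda_+-\lambda_-)}{\lambda_0-\lambda_-}\,t$, and on the third it is $t\mapsto q_1+q_1\,\tfrac{e_1(\lambda_+-\lambda_-)}{\lambda_+-\lambda_1}\,(t-1)$. Forward differences of order $\ge2$ of an affine map vanish, while those of order $1$ return step times slope; hence for $2\le l\le m$ one gets $\Delta^l\gamma_\Theta(0)=\Delta^l\gamma_\Theta(\tfrac{d-l}{d})=0$, and $\Delta^1\gamma_\Theta(0)=\tfrac1d\,q_0\,\tfrac{e_0(\lambda_+-\lambda_-)}{\lambda_0-\lambda_-}$, $\Delta^1\gamma_\Theta(\tfrac{d-1}{d})=\tfrac1d\,q_1\,\tfrac{e_1(\lambda_+-\lambda_-)}{\lambda_+-\lambda_1}$. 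Substituting into the two displayed identities and dividing by the appropriate power of $\lambda_+-\lambda_-$ yields $\tilde\gamma_\Theta'(\lambda_-)=\tfrac{e_0}{\lambda_0-\lambda_-}q_0$, $\tilde\gamma_\Theta'(\lambda_+)=\tfrac{e_1}{\lambda_+-\lambda_1}q_1$, and $\tilde\gamma_\Theta^{(l)}(\lambda_-)=\tilde\gamma_\Theta^{(l)}(\lambda_+)=0$ for $2\le l\le m$, as claimed.

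For the shape-preservation part I would argue coordinatewise, invoking the facts recalled just before the lemma: the operator $f\mapsto B_{d,f}$ sends increasing (resp.\ strictly increasing, resp.\ concave) functions on $[0,1]$ to increasing (resp.\ strictly increasing, resp.\ concave) functions, and by the linearity relation $B_{d,-f}=-B_{d,f}$ it likewise preserves convexity; composition with the increasing affine map $\phi$ then preserves monotonicity, strict monotonicity, concavity and convexity, so each coordinate of $\tilde\gamma_\Theta$ inherits the corresponding property of $\gamma_\Theta$.

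I expect the only genuinely delicate bookkeeping to be translating the single inequality \eqref{e:deg} into the statement ``the first $m+1$ and the last $m+1$ interpolation nodes each sit inside one affine piece of $\gamma_\Theta$''; once that is made precise, everything reduces to the standard endpoint behaviour of Bernstein polynomials and of their derivatives via \eqref{eq:bernsteinDerivative}.
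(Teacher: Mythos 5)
Your argument is correct and follows essentially the same route as the paper's own proof: reduce coordinatewise to $p=1$, view $\tilde\gamma_\Theta$ as the Bernstein polynomial of $\gamma_\Theta$ composed with the affine rescaling, evaluate the endpoint derivatives through the finite-difference formula \eqref{eq:bernsteinDerivative} using that condition \eqref{e:deg} places the relevant nodes inside the affine end pieces, and conclude shape preservation from the standard shape-preserving properties of the Bernstein operator. Your write-up simply makes explicit the bookkeeping that the paper leaves implicit, so nothing further is needed.
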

\begin{proof}
				Note that the dependence of $\tilde\gamma_\Theta $ in $(q_0,q_1)$ is linear so that the dependence of $\tilde\gamma_\Theta $ in $(q_0,q_1)$ is also linear. Hence $\tilde\gamma_\Theta $ is of the form $\lambda \mapsto a(\lambda) q_0 + b(\lambda)q_1$.  
				We can restrict ourselves to $p = 1$ since the general case follows from the univariate case applied coordinatewise.

				If $p=1$, then $\tilde\gamma_\Theta  = B_{f_{\Theta,d}} \circ A$, where $A \colon \lambda \mapsto \frac{\lambda - \lambda_-}{\lambda_+-\lambda_-}$. We have $\tilde\gamma_\Theta (0) = q_0$, $\tilde\gamma_\Theta (1) = q_1$ and $\Delta^1 f_{\Theta}(0) = \frac{\lambda_+ - \lambda_-}{\lambda_0 - \lambda_-}\frac{e_0}{d} q_0$, $\Delta^1 f_{\Theta}\left( 1 - \frac{1}{d} \right) = \frac{\lambda_+ - \lambda_-}{\lambda_+ - \lambda_1}\frac{e_1}{d} q_1$ and $\Delta^{(l)} f_{\Theta}(0) = \Delta^{(l)} f_{\Theta}\left( 1 - \frac{l}{d} \right) =0$. The results follow from the expressions in \eqref{eq:bernsteinPolynomial} and \eqref{eq:bernsteinDerivative} and the chain rule for $\tilde\gamma_\Theta  = B_{f_{\Theta,d}} \circ A$.

				The last property of $\tilde\gamma_\Theta $ is due to the shape preserving property of Bernstein approximation and the fact that $\tilde\gamma_\Theta  = B_{f_{\Theta,d}} \circ L$.
\end{proof}

\begin{remark}\label{rem:alignedInterpolation}
{\rm (a) {\bf [Affine image]} Using the notation of Lemma \ref{lem:convexInterpol}, if $(\lambda_-,q_0)$, $(\lambda_0, (1 + e_0) q_0)$, $(\lambda_1, (1 - e_1) q_1)$ and $(\lambda_+, q_1)$ are aligned, then the interpolation is actually an affine function.	\\
(b) {\bf [Degree of the interpolants]} Observe that the degree of the Bernstein interpolant is connected to the slopes of the piecewise path $\lambda$ by \eqref{e:deg}.}
\end{remark}

\section{Smooth convex interpolation}

Being given a subset $S$ of  $\R^p$, we denote by $\mathrm{int}(S)$ its  interior, $\bar S$ its  closure and $\bd S=\bar S\setminus \mathrm{int}(S)$  its boundary. Let us recall that the {\em support function} of $S$ is defined through
$$\sigma_S(x)=\sup\left\{\langle y,x\rangle :y\in S\right\}\in \R\cup\{+\infty\}.$$

\subsection{Smooth parametrization of convex rings}

A {\em convex ring} is a set of the form $C_1\setminus C_2$ where $C_1\subset C_2$ are convex sets. Providing adequate parameterizations for such objects is key for interpolating $C_1$ and $C_2$ by some (regular) convex function.

The following assertion plays a fundamental role.
\begin{assumption}
				Let $T_-,T_+ \subset \RR^2$ be convex, compact with $C^k$ boundary ($k \geq 2$) and positive curvature. Assume that, $T_- \subset \mathrm{int} (T_+)$ and $0 \in \mathrm{int}(T_-)$. 
				\label{ass:curvature}
\end{assumption}
The positive curvature assumption ensures that the boundaries can be parametrized by their normal, that is, for $i=-,+$, there exists 
\begin{align*}
				c_i \colon \RR / 2 \pi \Z  \mapsto \bd(T_i)
\end{align*}
such that the normal to $T_i$ at $c_i(\theta)$ is the vector $n(\theta) = (\cos(\theta),\sin(\theta))^T$ and $\dot{c}_i(\theta) = \rho_{i}(\theta) \tau(\theta)$ where $\rho_i > 0$ and $\tau(\theta) = (-\sin(\theta),\cos(\theta))$. In this setting, it holds that $c_i(\theta) = \mbox{argmax}_{y \in T_i} \left\langle n(\theta), y\right\rangle$. The map $c_i$ is the inverse of the Gauss map and is $C^{k-1}$ (see \cite{schneider1993convex} Section 2.5).

\begin{lemma}[Minkowski sum of convex sets with positive curvature]\label{l:curv}
Let $T_-,T_+$ be as in Assumption \ref{ass:curvature} with  normal parametrizations as above. For $a,b\geq0$ with $a+b>0$ set $T=aT_-+bT_+$.

Then $T$ has positive curvature and its boundary is given by
$$\bd T=\{a\,c_-(\theta)+b\,c_+(\theta): \theta \in  \RR / 2 \pi \Z \},$$
with the natural parametrization $\rpiz \ni \theta \to a\,c_-(\theta)+b\,c_+(\theta).$
\end{lemma}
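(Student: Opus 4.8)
The plan is to reduce everything to the description of $T=aT_-+bT_+$ via support functions, and then to use the known correspondence between $C^k$ support functions with positive second derivative (in the relevant sense) and convex bodies with positively curved $C^{k-1}$ boundary. First I would recall that the support function of a Minkowski sum is the sum of support functions: $\sigma_T = a\,\sigma_{T_-} + b\,\sigma_{T_+}$. Since $T_\pm$ have $0$ in their interior and $C^k$, positively curved boundaries, their support functions restricted to the unit circle, written as functions $h_\pm(\theta)=\sigma_{T_\pm}(n(\theta))$ with $n(\theta)=(\cos\theta,\sin\theta)$, are $C^k$ (one more degree than the Gauss map inverse, by the standard duality in \cite{schneider1993convex}, Section 2.5) and satisfy the positive-curvature condition $h_\pm(\theta)+h_\pm''(\theta)>0$ for all $\theta$. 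Then $h(\theta):=a\,h_-(\theta)+b\,h_+(\theta)$ satisfies $h+h''=a(h_-+h_-'')+b(h_++h_+'')>0$ as soon as $a+b>0$ and $a,b\ge 0$, so $T$ is a convex body with $C^k$, positively curved boundary; in particular its curvature is positive everywhere.

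Next I would identify the boundary parametrization. For a convex body with support function $h$ satisfying $h+h''>0$, the boundary point with outer normal $n(\theta)$ is classically given by the formula $p(\theta)=h(\theta)\,n(\theta)+h'(\theta)\,\tau(\theta)$ where $\tau(\theta)=(-\sin\theta,\cos\theta)$; this is precisely the inverse Gauss map. Applying this to $T_-$ and $T_+$ gives $c_\pm(\theta)=h_\pm(\theta)n(\theta)+h_\pm'(\theta)\tau(\theta)$, and by linearity of the formula in $h$, the inverse Gauss map of $T$ is
\[
  p_T(\theta)=h(\theta)n(\theta)+h'(\theta)\tau(\theta)=a\bigl(h_-(\theta)n(\theta)+h_-'(\theta)\tau(\theta)\bigr)+b\bigl(h_+(\theta)n(\theta)+h_+'(\theta)\tau(\theta)\bigr)=a\,c_-(\theta)+b\,c_+(\theta).
\]
Hence $\bd T=\{a\,c_-(\theta)+b\,c_+(\theta):\theta\in\RR/2\pi\Z\}$ with the stated natural parametrization, and one checks the normal/tangent relations ($\dot p_T(\theta)=(h(\theta)+h''(\theta))\tau(\theta)$, which is a positive multiple of $\tau(\theta)$, confirming both positive curvature $\rho_T=h+h''>0$ and that $n(\theta)$ is indeed the outer normal at $p_T(\theta)$).

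An alternative, more self-contained route avoids support functions altogether: define directly $p(\theta)=a\,c_-(\theta)+b\,c_+(\theta)$ and verify it is the boundary of $aT_-+bT_+$. One shows $p(\theta)\in aT_-+bT_+$ trivially, that $n(\theta)$ supports $aT_-+bT_+$ at $p(\theta)$ because $\langle n(\theta),p(\theta)\rangle=a\,\sigma_{T_-}(n(\theta))+b\,\sigma_{T_+}(n(\theta))=\sigma_{aT_-+bT_+}(n(\theta))$, so $p(\theta)=\argmax_{y\in aT_-+bT_+}\langle n(\theta),y\rangle$; then differentiating gives $\dot p(\theta)=a\rho_-(\theta)\tau(\theta)+b\rho_+(\theta)\tau(\theta)=(a\rho_-(\theta)+b\rho_+(\theta))\tau(\theta)$ with $a\rho_-+b\rho_+>0$, which both exhibits positive curvature and shows $\theta\mapsto p(\theta)$ traces out all of $\bd T$ injectively as $\theta$ runs over $\RR/2\pi\Z$ (the normal sweeps every direction exactly once). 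Since $c_\pm$ are $C^{k-1}$, so is $p$, consistent with $T$ having $C^k$ boundary.

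The only delicate point — and the main obstacle — is the bookkeeping on degrees of smoothness and on the exact meaning of "positive curvature" in the degenerate weights case: when one of $a,b$ is zero the statement must still hold (e.g. $T=aT_-$ is just a dilate), and one must make sure the curvature of $T$, namely $1/(a\rho_-+b\rho_+)$ in the arclength normalization or equivalently $1/(h+h'')$, is genuinely bounded away from zero on the compact circle, which follows from continuity and strict positivity of $h_\pm+h_\pm''$. I would also double-check that $0\in\inte T$ (needed if this lemma feeds into further constructions), which holds since $0\in\inte T_-\cap\inte T_+$ and $a+b>0$. With these checks in place the lemma follows.
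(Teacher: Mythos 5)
Your proposal is correct, and you actually offer two valid routes. The second, ``more self-contained'' route is essentially the paper's own argument: the paper also observes that the support of $T=aT_-+bT_+$ in direction $n(\theta)$ is attained precisely at $a\,c_-(\theta)+b\,c_+(\theta)$ (by splitting $\max_{y\in T}\langle y,n(\theta)\rangle$ over the Minkowski sum and invoking extremality), so the two write-ups differ only in bookkeeping direction --- the paper starts from an arbitrary boundary point $x$ and shows it must equal $a\,c_-(\theta)+b\,c_+(\theta)$, whereas you start from $\theta$ and verify that the candidate point is the unique maximizer. Your first route, via support-function calculus and the inverse-Gauss-map formula $p(\theta)=h(\theta)n(\theta)+h'(\theta)\tau(\theta)$ with $h=a\,h_-+b\,h_+$, is a genuinely different packaging: it is heavier on classical machinery (one must recall the formula and the $h+h''>0$ characterization of positive curvature), but it makes the linearity in $(a,b)$ structural and gives the curvature verification $\rho_T=h+h''=a(h_-+h_-'')+b(h_++h_+'')>0$ for free. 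The paper, by contrast, deliberately avoids this machinery and merely remarks ``the other results follow immediately,'' so your explicit derivative computation $\dot p(\theta)=(a\rho_-+b\rho_+)\tau(\theta)$ is actually a useful supplement to what the paper wrote. One minor imprecision to watch: the support function $h$ is $C^k$ and the inverse Gauss map $c$ is $C^{k-1}$, but the boundary as a submanifold is still $C^k$ (it is only the normal-angle \emph{parametrization} that loses a derivative); your phrase ``positively curved $C^{k-1}$ boundary'' slightly conflates these.
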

\begin{proof} We may assume $ab>0$ otherwise the result is obvious. Let $x$ be in $\bd T$ and denote by $n(\theta)$ the  normal vector at $x$ for a well chosen $\theta$, so that $x=\argmax \left\{\langle y,n(\theta)\rangle\right\}: y \in T\}$. Observe that the definition of the Minkowski sum yields
\begin{eqnarray*}
\max_{y\in T}\langle y,n(\theta)\rangle & = & \max_{(v,w)\in T_-\times T_+}\langle a v+bw,n(\theta)\rangle \\
&=& a \max_{v\in T_-} \langle v,n(\theta)\rangle+b\max_{w\in T_+} \langle w,n(\theta)\rangle 
 \end{eqnarray*}
 so that 
 \begin{eqnarray*}
\langle x,n(\theta)\rangle & =& a \langle c_-(\theta),n(\theta)\rangle+b\langle c_+(\theta),n(\theta)\rangle \\
& = &  \langle a c_-(\theta)+b c_+(\theta),n(\theta)\rangle \end{eqnarray*}
 which implies by extremality of $x$ that $x=  a c_-(\theta)+b c_+(\theta)$. Conversely, for any such $x$, $n(\theta)$ defines a supporting hyperplane to $T$ and $x$ must be on the boundary of $T$. The other results follow immediately.
  \end{proof} 
  
  In the  following fundamental proposition, we provide a smooth parametrization of the convex ring $T_+\setminus \inte T_-$. The major difficulty is to control tightly the derivatives at the boundary so that the parametrizations can be glued afterward to build smooth  interpolants. 
\begin{proposition}[$C^k$ parametrization of convex rings]
                Let $T_-,T_+$ be as in Assumption~\ref{ass:curvature} with their normal parametrization as above.
				Fix $k\geq 2$,  $\lambda_-< \lambda_0 < \lambda_1 < \lambda_+$ and $0 < e_0,e_1<1$. Choose $d $ in $ \NN^*$, such that
				\begin{align*}
								\frac{k}{d} \leq \min\left\{ \frac{\lambda_0 - \lambda_-}{\lambda_+ - \lambda_-}, 1 - \frac{\lambda_1 - \lambda_-}{\lambda_+ - \lambda_-} \right\}.
				\end{align*}
				Consider the map
				\begin{align}
								G \colon [\lambda_-,\lambda_+] \times \RR/2\pi\Z  &\mapsto \RR^2 \nonumber\\
								(\lambda, \theta) \mapsto \tilde\gamma_\Theta(\lambda)
								\label{eq:mapG}
				\end{align}
				with $\Theta = (c_-(\theta), c_+(\theta), \lambda_-, \lambda_0, \lambda_1, \lambda_+, e_0,e_1)$ and $\tilde\gamma$ as given by Lemma \ref{lem:convexInterpol}. Assume further that:
\begin{center}
$\MM$\, For any $\theta \in \RR / 2\pi\Z $, $\lambda \mapsto \left\langle G(\lambda, \theta), n(\theta)\right\rangle$ has strictly positive derivative on $[\lambda_-,\lambda_+]$.
\end{center}
				
				Then the image of $G$ is $\Ri:=T_+ \setminus \mathrm{int}(T_-)$, $G$ is $C^k$ and satisfies, for any $2 \leq l \leq k$ and any $m $ in $ \NN^*$,
				\begin{align*}
								\frac{\partial^m G}{\partial \theta^m}(\lambda_-,\theta) &= c_-^{(m)}(\theta)&
								\frac{\partial^m G}{\partial \theta^m}(\lambda_+,\theta) &= c_+^{(m)}(\theta)\\
								\frac{\partial^{m+1} G}{\partial \lambda\partial \theta^m} (\lambda_-,\theta) &= c_-^{(m)}(\theta) \frac{e_0}{\lambda_0 - \lambda_-}&
								\frac{\partial^{m+1} G}{\partial \lambda\partial \theta^m} (\lambda_+,\theta) &= c_+^{(m)}(\theta) \frac{e_1}{\lambda_+-\lambda_1}\\
								\frac{\partial^{l+m} G}{\partial \lambda^l \partial \theta^m }(\lambda_-,\theta) &= 0&
								\frac{\partial^{l+m} G}{\partial \lambda^l \partial \theta^m }(\lambda_+,\theta) &= 0.
				\end{align*}
				 Besides $G$ is a diffeomorphism from its domain on to its image. Set $\Ri\ni x \mapsto (f(x), \theta(x))$ to be the inverse of $G$.
				Then $f$ is $C^k$ and in addition, for all $x$ in $ \RR$, 
				\begin{align}
					\nabla f(x) =\quad& \frac{1}{\left\langle \frac{\partial G}{\partial \lambda} (f(x), \theta(x)), n(\theta(x)) \right\rangle} \, n(\theta(x))\nonumber\\
				    \nabla \theta(x) =\quad& \frac{1}{\left\langle\frac{\partial G}{\partial \theta}(f(x),\theta(x)),\tau(\theta(x))\right\rangle} \tau(\theta(x)) \nonumber\\
				    &-  \frac{\left\langle \frac{\partial G}{\partial \lambda} (f(x),\theta(x)), \tau(\theta(x))\right\rangle}{\left\langle \frac{\partial G}{\partial \lambda} (f(x),\theta(x)), n(\theta(x))\right\rangle \left\langle\frac{\partial G}{\partial \theta}(f(x),\theta(x)),\tau(\theta(x))\right\rangle   } n(\theta(x)) \nonumber\\
				    \nabla^2 f(x)=\quad&\frac{\left\langle\frac{\partial G}{\partial \theta}(f(x),\theta(x)),\tau(\theta(x))\right\rangle}{\left\langle \frac{\partial G}{\partial \lambda} (f(x), \theta(x)), n(\theta(x)) \right\rangle}
        			         \nabla \theta(x) \nabla \theta (x)^T \nonumber\\
        			         &-   \frac{\left\langle  \frac{\partial^2 G}{\partial \lambda^2}(f(x),\theta(x)), n(\theta(x))\right\rangle}{\left\langle \frac{\partial G}{\partial \lambda} (f(x), \theta(x)), n(\theta(x)) \right\rangle} \nabla f(x) \nabla f(x)^T,
        			\label{eq:gradient} 
				\end{align}
			    where all denominators are positive.
                \label{prop:diffeoMorphism}
				
\end{proposition}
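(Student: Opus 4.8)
The plan rests on two structural observations. First, by the linear dependence of the Bernstein reparametrization of Lemma~\ref{lem:convexInterpol} on its endpoints, $G$ has the product form $G(\lambda,\theta)=a(\lambda)\,c_-(\theta)+b(\lambda)\,c_+(\theta)$, where $a,b\colon[\lambda_-,\lambda_+]\to\RR$ are the scalar functions obtained from Lemma~\ref{lem:convexInterpol} with $(q_0,q_1)=(1,0)$ and $(q_0,q_1)=(0,1)$ respectively; they are polynomials. Second, for fixed $\lambda$ the curve $\theta\mapsto a(\lambda)c_-(\theta)+b(\lambda)c_+(\theta)$ is, by Lemma~\ref{l:curv}, precisely the normal parametrization of $\bd C_\lambda$, where $C_\lambda:=a(\lambda)T_-+b(\lambda)T_+$ is a Minkowski combination --- provided $a(\lambda),b(\lambda)\ge0$ and $a(\lambda)+b(\lambda)>0$. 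I would first record that $a,b\ge0$ and $a+b\ge 1-e_1>0$ on $[\lambda_-,\lambda_+]$ (along the piecewise path $\gamma_\Theta$ the coefficients of $c_-$ and $c_+$ are nonnegative with sum $\ge 1-e_1$, and Bernstein approximation preserves nonnegativity and lower bounds), together with the endpoint data $a(\lambda_-)=b(\lambda_+)=1$, $a(\lambda_+)=b(\lambda_-)=0$, $a'(\lambda_-)=\frac{e_0}{\lambda_0-\lambda_-}$, $b'(\lambda_+)=\frac{e_1}{\lambda_+-\lambda_1}$, $a'(\lambda_+)=b'(\lambda_-)=0$, $a^{(l)}(\lambda_\pm)=b^{(l)}(\lambda_\pm)=0$ for $2\le l\le k$, all read off from Lemma~\ref{lem:convexInterpol}. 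Then $G$ is $C^k$ (polynomial in $\lambda$, with $\theta$-regularity inherited from $c_\pm$), and differentiating $G=a(\cdot)c_-(\cdot)+b(\cdot)c_+(\cdot)$ $m$ times in $\theta$ and evaluating at $\lambda_\pm$ immediately gives the six boundary identities.

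Next I would identify the image. By Lemma~\ref{l:curv}, $\{G(\lambda,\theta):\theta\}=\bd C_\lambda$ and $\langle G(\lambda,\theta),n(\theta)\rangle=\sigma_{C_\lambda}(n(\theta))$ for all $\theta$; since $C_{\lambda_-}=T_-$ and $C_{\lambda_+}=T_+$, Assumption~$\MM$ says exactly that $\lambda\mapsto\sigma_{C_\lambda}(n)$ is strictly increasing for every unit $n$, which makes $(C_\lambda)_\lambda$ a strictly nested family ($C_\lambda\subset\inte C_{\lambda'}$ for $\lambda<\lambda'$) interpolating $T_-$ and $T_+$. The inclusion $\bigcup_\lambda\bd C_\lambda\subseteq T_+\setminus\inte T_-=\Ri$ is then immediate. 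For the reverse inclusion, given $x\in\Ri$ I would consider $\psi(\lambda):=\max_{\|n\|=1}\bigl(\langle x,n\rangle-\sigma_{C_\lambda}(n)\bigr)$, which is continuous and strictly decreasing in $\lambda$, with $\psi(\lambda_-)\ge0$ (because $x\notin\inte T_-$) and $\psi(\lambda_+)\le0$ (because $x\in T_+$); its unique zero $\lambda^*$ satisfies $x\in\bd C_{\lambda^*}$, hence $x\in\mathrm{Im}\,G$. So $\mathrm{Im}\,G=\Ri$.

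For the diffeomorphism claim, injectivity follows because $G(\lambda,\theta)=G(\lambda',\theta')$ puts the common point in $\bd C_\lambda\cap\bd C_{\lambda'}$, forcing $\lambda=\lambda'$ (strict nesting) and then $\theta=\theta'$ (injectivity of the normal parametrization of the positively curved body $C_\lambda$). For the differential, $\frac{\partial G}{\partial\theta}=\bigl(a(\lambda)\rho_-(\theta)+b(\lambda)\rho_+(\theta)\bigr)\,\tau(\theta)$ is a strictly positive multiple of $\tau(\theta)$, while $\langle\frac{\partial G}{\partial\lambda},n(\theta)\rangle>0$ by $\MM$; so in the orthonormal frame $(n(\theta),\tau(\theta))$ the Jacobian of $G$ is triangular with positive diagonal $\bigl(\langle\frac{\partial G}{\partial\lambda},n(\theta)\rangle,\ a\rho_-+b\rho_+\bigr)$, hence invertible everywhere. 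A $C^k$ bijection with everywhere invertible differential is a $C^k$-diffeomorphism onto its image (a homeomorphism by compactness of the domain; the inverse function theorem --- applied on a slightly enlarged open domain where $DG$ stays invertible by continuity --- produces local $C^k$-inverses that patch together with $G^{-1}$ by injectivity), so $f=[G^{-1}]_1$ is $C^k$.

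Finally, the derivative formulas come from implicitly differentiating $G(f(x),\theta(x))=x$, which gives $\frac{\partial G}{\partial\lambda}\,\nabla f(x)^T+\frac{\partial G}{\partial\theta}\,\nabla\theta(x)^T=I_2$ at $(f(x),\theta(x))$: left-multiplying by $n(\theta(x))^T$ annihilates $\frac{\partial G}{\partial\theta}$ and yields $\nabla f$, and left-multiplying by $\tau(\theta(x))^T$ (then substituting $\nabla f$) yields $\nabla\theta$. The Hessian follows by differentiating $\nabla f=\phi^{-1}n(\theta(\cdot))$, $\phi:=\langle\frac{\partial G}{\partial\lambda},n(\theta)\rangle$: using $n'(\theta)=\tau$, the collinearity of $\frac{\partial^2 G}{\partial\lambda\partial\theta}$ with $\tau$ (so that $\langle\frac{\partial^2 G}{\partial\lambda\partial\theta},n\rangle=0$ and $\nabla\phi=\langle\frac{\partial^2G}{\partial\lambda^2},n\rangle\nabla f+\langle\frac{\partial G}{\partial\lambda},\tau\rangle\nabla\theta$), and $n(\theta)=\phi\,\nabla f$, the expression collapses --- after inserting the formula for $\nabla\theta$ --- to \eqref{eq:gradient}; positivity of the denominators $\langle\frac{\partial G}{\partial\lambda},n(\theta)\rangle$ and $\langle\frac{\partial G}{\partial\theta},\tau(\theta)\rangle=a\rho_-+b\rho_+$ has already been noted. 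The one genuinely delicate step is the image identification: recognizing the $\lambda$-level curves of $G$ as boundaries of a Minkowski-interpolating family and using $\MM$ to make that family strictly monotone --- everything else is linearity of the Bernstein construction plus the chain rule.
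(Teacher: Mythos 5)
Your proposal is correct and takes essentially the same route as the paper's proof: the product form $G(\lambda,\theta)=a(\lambda)c_-(\theta)+b(\lambda)c_+(\theta)$, the Minkowski family $T_\lambda=a(\lambda)T_-+b(\lambda)T_+$ analyzed via Lemma~\ref{l:curv}, strict nesting from $\MM$, injectivity plus the triangular Jacobian in the frame $(n(\theta),\tau(\theta))$, and implicit differentiation of $G(f(x),\theta(x))=x$ for the gradient and Hessian formulas. The only harmless variations are your intermediate-value argument on the support-function gap for surjectivity, where the paper instead defines $f(x)$ as an infimum over $\lambda$, and your direct bound $a+b\geq 1-e_1>0$ in place of the paper's use of $0\in\inte T_-$.
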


\begin{remark}{\rm 
Note that $G$ is actually well defined and smooth on an open set containing its domain. As we shall see it is also a diffeomorphism from an open set containing its domain onto its image.}
 \end{remark}

\begin{proof}
Note that by construction, we have $G(\lambda,\theta) = a(\lambda) c_-(\theta) + b(\lambda)c_+(\theta)$ for some polynomials $a$ and $b$ which are nonnegative on $[\lambda_-,\lambda_+]$. The formulas for the derivatives follow easily from this remark, the form of $a$ and $b$ and Lemma \ref{lem:convexInterpol}.

				Set, for any $\lambda $ in $ [\lambda_-,\lambda_+]$, $T_\lambda = a(\lambda) T_- + b(\lambda) T_+$. The resulting set $T_\lambda$ is convex and has a positive curvature by Lemma~\ref{l:curv}, and for $\lambda$ fixed $G(\lambda,\cdot)$ is the inverse of the Gauss map of $T_\lambda$, which constitutes a parametrization by normals of the boundary.

				Assume that $\lambda < \lambda'$, using the monotonicity assumption $\MM$, we have for any $\theta,\theta'$,
				\begin{align*}
								\left\langle n(\theta'), G(\lambda,\theta) \right\rangle& \leq \sup_{y \in T_{\lambda}} \left\langle n(\theta'),y \right\rangle \\
								& = \left\langle n(\theta'), G(\lambda,\theta') \right\rangle\\
								&< \left\langle n(\theta'), G(\lambda',\theta') \right\rangle
				\end{align*}
				so that $G(\lambda,\theta) \neq G(\lambda',\theta')$. Furthermore, we have by definition of $G(\lambda',\theta')$
				\begin{align*}
								G(\lambda,\theta) \in \bigcap_{\theta' \in \RR/2\pi\Z } \left\{ y,\; \left\langle y, n(\theta')\right\rangle \leq \left\langle n(\theta'), G(\lambda',\theta') \right\rangle \right\} = T_{\lambda'},	
				\end{align*}
				where the equality follows from the convexity of $T_{\lambda'}$.
				By convexity and compactness, this entails that $T_\lambda = \mathrm{conv}(\bd(T_\lambda)) \subset \inte T_{\lambda'}$.

				Let us show that the map $G$ is bijective, first consider proving surjectivity. 
				Let $f$ be defined on $T_+ \setminus \mathrm{int}(T_-)$ through
				\begin{align}
								f \colon x \mapsto \inf_{}\left\{ \lambda : \lambda\geq \lambda_-, \; x \in T_\lambda = a(\lambda) T_- + b(\lambda) T_+\right\}.
			\label{eq:defIntropol}
				\end{align}
				Since $a(\lambda_+)=0,  b(\lambda_+)=1$ this function is well defined and by compactness and continuity the infimum is achieved. It must hold that $x $ belongs to $ \bd(T_{f(x)})$, indeed, if $f(x) = \lambda_-$, then $x $ belongs to $ \bd(T_-)$ and otherwise, if $x $ is in $ \mathrm{int}(T_{\lambda'})$ for $\lambda' > \lambda_-$, then $f(x) < \lambda'$.
				 We deduce that $x$ is of the form $G(f(x),\theta)$ for a certain value of $\theta$, so that $G$ is surjective.

				As for injectivity, we have already seen a first case, the monotonicity assumption $\MM$  ensures that $\lambda \neq \lambda'$ implies $G(\lambda,\theta) \neq G(\lambda',\theta')$ for any $\theta,\theta'$. Furthermore, we have the second case, for any $\lambda $ in $ [\lambda_-,\lambda_+]$ and any $\theta$, $G(\lambda,\theta) = \arg\max_{y\in T_\lambda} \left\langle y,n(\theta) \right\rangle$ so that $\theta \neq \theta'$ implies $G(\lambda,\theta) \neq G(\lambda,\theta')$. So in all cases, $(\lambda,\theta) \neq (\lambda',\theta')$ implies that $G(\lambda,\theta) \neq G(\lambda',\theta')$ and $G$ is injective.

				Let us now show that the map $G$ is a local diffeomorphism by estimating its Jacobian map. 
				
				Since $0 \in \mathrm{int}(T_-)$, we have for any $\lambda,\theta$,
				\begin{align*}
								0 &< \sup_{y \in T_-} \left\langle y, n(\theta) \right\rangle \\								
								&= \left\langle  G(\lambda_-,\theta),n(\theta) \right\rangle \\								
								& \leq a(\lambda) \left\langle  c_-(\theta), n(\theta)\right\rangle + b(\lambda) \left\langle c_+(\theta), n(\theta)  \right\rangle,
				\end{align*}
				and both scalar products are positive so that $a(\lambda) + b(\lambda)  > 0$. Hence, for any $\theta $ in $ \RR / 2\pi \Z $,
				\begin{align}
				\label{eq:derivativeGtheta}
								\frac{\partial G}{\partial \theta}(\lambda,\theta) = (a(\lambda) \rho_-(\theta) + b(\lambda) \rho_+ (\theta)) \tau(\theta),
				\end{align}
				with $a(\lambda) \rho_-(\theta) + b(\lambda) \rho_+ (\theta) > 0$. Furthermore by assumption $\lambda\to  \max_{y \in T_\lambda} \left\langle y, n(\theta) \right\rangle=\left\langle  G(\lambda,\theta),n(\theta) \right\rangle$ has strictly positive derivative in $\lambda$,  whence
				\begin{align*}
								\left\langle \frac{\partial G}{\partial \lambda} (\lambda,\theta), n(\theta)\right\rangle > 0.
				\end{align*}
				We deduce that for any fixed $\theta$, in the basis $(n(\theta), \tau(\theta))$, the Jacobian of $G$, denoted $J_G$,  is triangular with positive diagonal entries. More precisely fix $\lambda, \theta$ and set $x = G(\lambda, \theta)$ such that $\lambda = f(x)$, $\theta = \theta(x)$. In the basis $(n(\theta), \tau(\theta))$, we deduce from \eqref{eq:derivativeGtheta} that the Jacobian of $G$ is of the form
				\begin{align*}
				    J_G(\lambda, \theta) = 
				    \begin{pmatrix}
				        \alpha &0\\
				        \gamma&\beta
				    \end{pmatrix},
				\end{align*}
				where  
				\begin{align*}
				        \alpha &= \left\langle \frac{\partial G}{\partial \lambda} (\lambda,\theta), n(\theta)\right\rangle>0  \\
				        \beta &= \left\langle \frac{\partial G}{\partial \theta} (\lambda,\theta), \tau(\theta)\right\rangle = a(\lambda) \rho_-(\theta) + b(\lambda) \rho_+ (\theta)   >0\\
				        \gamma &=  \left\langle \frac{\partial G}{\partial \lambda} (\lambda,\theta), \tau(\theta)\right\rangle.
				\end{align*}
				It is thus invertible and we have a local diffeomorphism. We deduce that 
				\begin{align*}
				    J_G(\lambda, \theta)^{-1} = 
				    \begin{pmatrix}
				        \alpha^{-1} &0\\
				        \frac{-\gamma}{\alpha\beta}&\beta^{-1}
				    \end{pmatrix}.
				\end{align*}
			    We have $J_G(\lambda,\theta)^{-1} = J_{G^{-1}}(x)$ so that the first line is $\nabla f(x)$ and second line is $\nabla \theta(x)$, which proves the claimed expressions for gradients. 
			    
			    We also have $d n(\theta) / d\theta = \tau(\theta)$ so that 
				\begin{align*}
				    J_{n \circ \theta}(x) = \tau(\theta(x)) \nabla \theta(x)^T.
				\end{align*}
			    Differentiating the gradient expression, we obtain ($\nabla$ denotes gradient with respect to $x$):
			    \begin{align*}
			        &\nabla^2 f(x)\\
			        =\quad& \frac{\partial}{\partial  x} \left(\frac{1}{\left\langle \frac{\partial G}{\partial \lambda} (f(x), \theta(x)), n(\theta(x)) \right\rangle} \, n(\theta(x))\right)\\
			        =\quad& \frac{1}{\left\langle \frac{\partial G}{\partial \lambda} (f(x), \theta(x)), n(\theta(x)) \right\rangle} \, J_{n\circ \theta}(x) \\
			        &+ n(\theta(x)) \nabla \left(\frac{1}{\left\langle \frac{\partial G}{\partial \lambda} (f(x), \theta(x)), n(\theta(x)) \right\rangle }\right)^T  \\
			        =\quad& \frac{1}{\left\langle \frac{\partial G}{\partial \lambda} (f(x), \theta(x)), n(\theta(x)) \right\rangle} \, J_{n\circ \theta}(x) \\
			        &- n(\theta(x)) \nabla \left(\left\langle \frac{\partial G}{\partial \lambda} (f(x), \theta(x)), n(\theta(x)) \right\rangle \right)^T \frac{1}{\left\langle \frac{\partial G}{\partial \lambda} (f(x), \theta(x)), n(\theta(x)) \right\rangle^2} \\
			        =\quad & \frac{1}{\left\langle \frac{\partial G}{\partial \lambda} (f(x), \theta(x)), n(\theta(x)) \right\rangle} \left( \tau(\theta) \nabla \theta(x)^T - \nabla f(x)  \nabla \left(\left\langle \frac{\partial G}{\partial \lambda} (f(x), \theta(x)), n(\theta(x)) \right\rangle \right)^T\right).
			    \end{align*}
			    We have
			    \begin{align*}
			         =\quad&\nabla \left(\left\langle \frac{\partial G}{\partial \lambda} (f(x), \theta(x)), n(\theta(x)) \right\rangle \right)^T \\
			         =\quad & \frac{\partial G}{\partial \lambda} (f(x), \theta(x))^T J_{n\circ\theta}(x) + n(\theta(x))^T J_{\frac{\partial G}{\partial \lambda}}(f(x),\theta(x)) J_{G^{-1}} (x)  \\
			         =\quad & \left\langle \frac{\partial G}{\partial \lambda}(f(x), \theta(x)), \tau(\theta(x)) \right\rangle \nabla \theta(x)^T + \left\langle n (\theta(x)), \frac{\partial^2 G}{\partial \lambda^2}(\lambda,\theta)\right\rangle \nabla f(x)^T,
			    \end{align*}
			    where, for the last identity, we have used the fact that
			    \begin{align*}
			        n(\theta(x))^T J_{\frac{\partial G}{\partial \lambda}}(f(x),\theta(x)) &= \left\langle n (\theta(x)), \frac{\partial^2 G}{\partial \lambda^2}(\lambda,\theta)\right\rangle n(\theta(x))^T + \left\langle n (\theta(x)), \frac{\partial^2 G}{\partial \lambda \partial \theta}(\lambda,\theta)\right\rangle \tau(\theta(x))^T\\
			        &= \left\langle n (\theta(x)), \frac{\partial^2 G}{\partial \lambda^2}(\lambda,\theta)\right\rangle n(\theta(x))^T\\
			        n(\theta(x))^T J_{G^{-1}}(x) &= n(\theta(x))^T  J_G(f(x),\theta(x))^{-1}\\
			        &= \frac{1}{\left\langle \frac{\partial G}{\partial \lambda} (f(x),\theta(x)), n(\theta(x))\right\rangle} n(\theta(x))^T = \nabla f(x)^T.
			    \end{align*}
			    We deduce that 
			    \begin{align*}
			        &\nabla^2 f(x)\\
			        =\quad & \frac{1}{\left\langle \frac{\partial G}{\partial \lambda} (f(x), \theta(x)), n(\theta(x)) \right\rangle}\\
			        &\Bigg( \tau(\theta(x)) \nabla \theta(x)^T - \left\langle \frac{\partial G}{\partial \lambda}(f(x), \theta(x)), \tau(\theta(x)) \right\rangle \nabla f(x) \nabla \theta(x)^T  \\
			        &-  \nabla f(x) \nabla f(x)^T \left\langle n (\theta(x)), \frac{\partial^2 G}{\partial \lambda^2}(\lambda,\theta)\right\rangle \Bigg).
			    \end{align*}
			    We have 
			    \begin{align*}
			       & \tau(\theta(x)) \nabla \theta(x)^T - \left\langle \frac{\partial G}{\partial \lambda}(f(x), \theta(x)), \tau(\theta(x)) \right\rangle \nabla f(x) \nabla \theta(x)^T \\
			        =\quad& \left(\tau(\theta(x)) - \frac{\left\langle \frac{\partial G}{\partial \lambda}(f(x), \theta(x)), \tau(\theta(x)) \right\rangle}{\left\langle \frac{\partial G}{\partial \lambda}(f(x), \theta(x)), n(\theta(x)) \right\rangle} n(\theta(x)) \right) \nabla \theta(x)^T\\
			        =\quad& (a(\lambda) \rho_-(\theta) + b(\lambda) \rho_+ (\theta)) \nabla \theta(x) \nabla \theta (x)^T
			    \end{align*}
			    So that we actually get
			    \begin{align*}
			        &\nabla^2 f(x) \left\langle \frac{\partial G}{\partial \lambda} (f(x), \theta(x)), n(\theta(x)) \right\rangle\\
			        =\quad &
			        (a(\lambda) \rho_-(\theta) + b(\lambda) \rho_+ (\theta)) \nabla \theta(x) \nabla \theta (x)^T -   \left\langle n (\theta(x)), \frac{\partial^2 G}{\partial \lambda^2}(\lambda,\theta)\right\rangle \nabla f(x) \nabla f(x)^T\\
			        =\quad &\left\langle\frac{\partial G}{\partial \theta}(\lambda,\theta),\tau(\theta(x))\right\rangle
			         \nabla \theta(x) \nabla \theta (x)^T -   \left\langle n (\theta(x)), \frac{\partial^2 G}{\partial \lambda^2}(\lambda,\theta)\right\rangle \nabla f(x) \nabla f(x)^T.
			    \end{align*}
			    This concludes the proof.
\end{proof}

\subsection{Smooth convex interpolation of smooth positively curved convex sequences}

In this section, we consider an indexing set $I$ with either $I = \NN$ or $I = \ZZ$, and an increasing sequence of compact convex sets $\left( T_i \right)_{i\in I}$ such that for any $i $ in $ I$, the couple  $T_+ := T_{i+1}$, $T_-: = T_i$ satisfies Assumption~\ref{ass:curvature}. In particular, for each $i $ in $ I$, $T_i$ is compact, convex with $C^k$ boundary and positive  curvature.  We denote by $c_i$ the corresponding parametrization by the normal, $T_i \subset \mathrm{int}\, T_{i+1}$. With no loss of generality we assume  $\displaystyle 0 \in \cap_{i \in I} T_i$. 

This is our main theoretical result.
\begin{theorem}[Smooth convex interpolation]
				\label{th:smoothinterp}
        Let $I = \NN$ or $I = \ZZ$ and $\left( T_i \right)_{i\in I}$ such that for any $i \in I$, $T_i \subset \RR^2$ and the couple  $T_+ := T_{i+1}$, $T_-: = T_i$ satisfies Assumption \ref{ass:curvature}.
                Then there exists a $C^k$ convex function 
				\begin{align*}
								f \colon \T:= \mathrm{int}\left(\bigcup_{i\in I} T_i   \right) \mapsto \RR
				\end{align*}
				such that 
				
				(i) $T_i$ is a sublevel set of $f$ for all $i $ in $ I$.

				(ii) We have 
				\begin{align*}
				    \argmin f= \begin{cases}
				    \bigcap_I T_i &\quad \text{ if } I = \ZZ \\
				    \{0\}&\quad \text{ if } I = \NN.
				\end{cases}
				\end{align*}
				
				(iii) $\nabla^2 f $ is positive definite on $\T \setminus \argmin f$, and if $I = \NN$, it is positive definite throughout $\T$. 	
				\end{theorem}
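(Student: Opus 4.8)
The plan is to build $f$ by gluing together, for each consecutive pair $T_i \subset \inte T_{i+1}$, a $C^k$ parametrization of the convex ring $\Ri_i := T_{i+1}\setminus\inte T_i$ furnished by Proposition~\ref{prop:diffeoMorphism}, and then to take an inductive limit if $I=\NN$ (with special care at the accumulation point $\{0\}$), or a two-sided union if $I=\ZZ$. Concretely, I would fix once and for all a strictly increasing sequence of reals $(\lambda_i)_{i\in I}$ (playing the role of the target level values; for $I=\NN$ with $\lambda_i\to 0^-$ or $\lambda_i \uparrow$ some finite value, for $I=\ZZ$ with $\lambda_i$ strictly increasing), and for each $i$ choose auxiliary parameters $\lambda_-=\lambda_i < \lambda_0^{(i)}<\lambda_1^{(i)}<\lambda_+=\lambda_{i+1}$, $e_0^{(i)},e_1^{(i)}\in(0,1)$ and a degree $d_i$ satisfying the degree bound \eqref{e:deg}. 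The key point is that the derivative formulas at the endpoints in Proposition~\ref{prop:diffeoMorphism} show that the $C^k$ pieces $f_i\colon \Ri_i\to[\lambda_i,\lambda_{i+1}]$ match to order $k$ across each shared boundary $\bd T_{i+1}$: at $\lambda_+$ the $\theta$-derivatives of $G_i$ reduce to $c_{i+1}^{(m)}$, the mixed $\partial_\lambda\partial_\theta^m$ derivatives are $c_{i+1}^{(m)} e_1^{(i)}/(\lambda_{i+1}-\lambda_1^{(i)})$, and all higher pure $\lambda$-derivatives vanish; the same with $\lambda_-$ and $c_i$ for $G_i$ restricted near $\bd T_i$. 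One must only choose the junction parameters of consecutive rings compatibly so that the first $\lambda$-derivative also matches, i.e. $e_1^{(i)}/(\lambda_{i+1}-\lambda_1^{(i)}) = e_0^{(i+1)}/(\lambda_0^{(i+1)}-\lambda_{i+1})$; this is a free scalar constraint and is easily arranged. The vanishing of all derivatives of order $\ge 2$ in $\lambda$ at the junctions makes the glued function $C^k$ regardless of the other choices.

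Next I would verify the three conclusions. For (i), by construction $f_i^{-1}([\lambda_-,\mu]) = T_\mu$ with $T_{\lambda_i}=T_i$, so $T_i=\{f\le \lambda_i\}$ is a sublevel set. For (iii), on each ring the Hessian formula \eqref{eq:gradient} expresses $\nabla^2 f$ as a sum of two rank-one terms: a strictly positive multiple of $\nabla\theta\nabla\theta^T$ (the coefficient $\langle \partial_\theta G,\tau\rangle = a\rho_- + b\rho_+>0$ over a positive denominator) plus $-\langle n,\partial^2_\lambda G\rangle/\langle \partial_\lambda G,n\rangle \cdot \nabla f\nabla f^T$; since $\nabla f$ and $\nabla\theta$ are linearly independent (the Jacobian $J_G$ is invertible), positive definiteness on the ring amounts to showing the coefficient of $\nabla f\nabla f^T$ is strictly positive, i.e. $\langle n(\theta),\partial^2_\lambda G(\lambda,\theta)\rangle < 0$ — equivalently the map $\lambda\mapsto\langle G(\lambda,\theta),n(\theta)\rangle$ is strictly concave. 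This is where the parameters $\lambda_0^{(i)},\lambda_1^{(i)},e_0^{(i)},e_1^{(i)}$ must be tuned: the piecewise-affine profile $\gamma_\Theta$ should be chosen with strictly decreasing slopes in the relevant direction, and then the shape-preserving property of Bernstein approximation (Lemma~\ref{lem:convexInterpol}) keeps the Bernstein reparametrization strictly concave in $\langle\cdot,n(\theta)\rangle$; one also needs to recheck that condition $\MM$ (strict positivity of that same derivative) is compatible with strict concavity, which forces the slope to stay positive throughout, a matter of taking $e_0^{(i)}$ small enough and $\lambda_1^{(i)}$ close enough to $\lambda_{i+1}$. For (ii), in the $I=\ZZ$ case $\inf f = \inf_i\lambda_i$ and $\argmin f$ is exactly $\bigcap_i T_i$ because every $x$ in that intersection has $f(x)\le\lambda_i$ for all $i$; in the $I=\NN$ case the sets $T_i$ shrink to $\{0\}$ and one extends $f$ to $\{0\}$ by the limiting value, checking $C^k$-ness at $0$ and positive definiteness there.

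The main obstacle I expect is the behavior at the accumulation point in the $I=\NN$ case: one must show the glued function, defined so far only on $\bigcup_i(T_{i+1}\setminus\inte T_i) = \T\setminus\{0\}$, extends to a $C^k$ function on a neighborhood of $0$ with positive definite Hessian there. This requires quantitative control: as $i\to\infty$ one must choose the ring data $(\lambda_i,\lambda_0^{(i)},\lambda_1^{(i)},e_0^{(i)},e_1^{(i)},d_i)$ so that all derivatives of $f$ up to order $k$ stay bounded and, for the Hessian, bounded below by a positive multiple of the identity uniformly as $x\to 0$; since $c_i\to 0$ in $C^{k-1}$ and the rings degenerate, one needs the $\lambda$-increments $\lambda_{i+1}-\lambda_i$ and the curvature radii $\rho_i$ to shrink at compatible geometric rates, and one invokes the derivative bound \eqref{e:born} to absorb the growth coming from the degrees $d_i$. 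A secondary but routine difficulty is bookkeeping the compatibility constraints between consecutive rings simultaneously with the concavity/monotonicity constraints within each ring; I would handle this by first fixing the $\lambda_i$'s and the junction slopes, then choosing the remaining parameters ring by ring with enough slack, postponing the uniform estimates to a final lemma.
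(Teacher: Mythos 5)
Your overall skeleton (ring-by-ring use of Proposition \ref{prop:diffeoMorphism}, matching first $\lambda$-derivatives and killing higher ones at the junctions) is the same as the paper's, but your route to conclusion (iii) contains a genuine contradiction. You propose to get positive definiteness inside each ring by making $\lambda\mapsto\langle G(\lambda,\theta),n(\theta)\rangle$ \emph{strictly} concave, yet your own gluing requirement forces all pure $\lambda$-derivatives of order $\geq 2$ of $G$ to vanish at $\lambda_-$ and $\lambda_+$; hence $\langle n,\partial^2_\lambda G\rangle=0$ on every junction level set $\bd T_i$ (and, by Lemma \ref{lem:convexInterpol}, essentially on a whole band near it), so the Hessian formula \eqref{eq:gradient} gives a rank-one, merely positive semidefinite Hessian there. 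Strict concavity of the profile is incompatible with $C^k$ gluing, so the parameter tuning you describe cannot exist. The paper resolves this differently: it accepts a PSD Hessian and composes with $g(t)=\sqrt{t^2+1}+t$, using $\nabla^2(g\circ f)=g'\nabla^2 f+g''\nabla f\nabla f^T$ to supply the missing $\nabla f\nabla f^T$ direction outside $\argmin f$. Relatedly, you never actually prove convexity of the glued $f$: the paper gets it from the de Finetti--Fenchel--Crouzeix criterion (Theorem \ref{th:crouzeix}), which requires the support profile to be concave \emph{simultaneously for every} $\theta$ and across all junctions; this is exactly what the value assignment $\lambda_{i+1}=\lambda_i+K_i(\lambda_i-\lambda_{i-1})$ with $K_i=\max_{\|x\|=1}\frac{\sigma_{S_{i+1}}(x)-\sigma_{S_i}(x)}{\sigma_{S_i}(x)-\sigma_{S_{i-1}}(x)}$ (together with the inserted homothets $(1\pm\epsilon_i)T_i$) achieves, and it is not the "easily arranged free scalar constraint" you describe: it is a sup over $\theta$ that also has to keep $(\lambda_i)$ bounded below when $I=\ZZ$, which the paper checks by a summability comparison.

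You have also inverted the geometry of the two index sets. Since $T_i\subset\inte T_{i+1}$, the sequence \emph{increases}; for $I=\NN$ there is no cascade of shrinking rings accumulating at $0$ — the only task is to fill the interior of the smallest set $T_0$, which the paper does by reducing (WLOG) to $S_0=B$, $S_1=\tfrac53 B$ and inserting an explicit radial core proportional to $\|x\|^2$ near $0$ (Lemmas \ref{lem:interpolationAroundZero} and \ref{lem:diffGauge2}); this is where the everywhere-positive-definite Hessian for $I=\NN$ comes from. The genuinely delicate degeneration occurs for $I=\ZZ$ as $i\to-\infty$: you say nothing about why $f$ is $C^k$ up to $\bd\bigl(\bigcap_i T_i\bigr)$ (the paper flattens all derivatives there by composing with the function $\phi$ of Lemma \ref{lem:CkSmoothing} before applying the $g$-composition), nor about why $\inf_i\lambda_i>-\infty$ can coexist with the concavity needed for convexity. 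As written, your plan leaves conclusion (ii)--(iii) unproved in the $\ZZ$ case and misidentifies the difficulty in the $\NN$ case.
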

\begin{proof}$\:$\\ 
\noindent
{\em Preconditionning.}	We have that $0 \in \cap_{i \in I } \mathrm{int}(T_i)$. Hence for any $i $ in $ I$ and $0 \leq \alpha <1$, $\alpha T_i \in \mathrm{int}(T_i)$. Furthermore, for $\alpha>0$, small enough, $(1 + \alpha)T_{i} \subset (1-\alpha) \mathrm{int}(T_{i+1})$. 
Set $\alpha_0$ such that $(1 + \alpha_0)T_0 \subset (1-\alpha_0) \mathrm{int} (T_1)$.  By forward (and backward if $I = \ZZ$) induction, for all $i  $ in $ I$, we obtain $\alpha_i>0$ such that
				\begin{align*}
								(1 + \alpha_i) T_i &\subset (1-\alpha_{i}) \mathrm{int} (T_{i+1}).
				\end{align*}
				Setting for all $i $ in $ I$, $\epsilon_{i+1} = \min\{\alpha_i,\alpha_{i+1}\}$ ($\epsilon_0 = \alpha_0$ if $I = \NN$), we have for all $i $ in $ I$
				\begin{align*}
				    (1 + \epsilon_i) T_i \subset (1 + \alpha_i) T_i &\subset (1-\alpha_{i}) \mathrm{int} (T_{i+1}) \subset(1-\epsilon_{i+1}) \mathrm{int} (T_{i+1}).
				\end{align*}
For all $i $ in $ I$, we introduce
\begin{align*}
				S_{3i} &= T_i,\\
				S_{3i + 1} &= (1 + \epsilon_i) T_i\\
				S_{3i + 2} &= (1 - \epsilon_{i+1}) T_{i+1}.
\end{align*} 

We have a new sequence of strictly increasing compact convex sets $\left( S_i\right)_{i \in I }$. 

\smallskip
\noindent
{\em Value assignation.} For each $i $ in $ I $, we set
\begin{align*}
				K_i = \max_{\|x\| = 1} \frac{\sigma_{S_{i+1}}(x)  - \sigma_{S_i}(x)}{ \sigma_{S_{i}}(x)  - \sigma_{S_{i-1}}(x)} \in \left( 0, + \infty \right).
\end{align*}
Note that for all $i $ in $ I$, $K_{3i} = 1$.  We choose $\lambda_{1} = 2$, $\lambda_0 = 1$ and for all $i $ in $ I $, 
\begin{equation}\label{value}
				\lambda_{i+1} = \lambda_i + K_i(\lambda_i - \lambda_{i-1}).
\end{equation}
By construction, we have for all $i $ in $ I $ and all $\theta \in \RR / 2 \pi \Z $, 
\begin{align*}
				\frac{\sigma_{S_{i+1}}(n(\theta))   - \sigma_{S_{i}}(n(\theta))}{\lambda_{i+1} - \lambda_{i}} \leq	\frac{\sigma_{S_{i}}(n(\theta)) - \sigma_{S_{i-1}}(n(\theta))  }{\lambda_{i} - \lambda_{i-1}}.
\end{align*}
If $I = \ZZ$, this entails
\begin{align*}
				0 < \lambda_i - \lambda_{i - 1} \leq \frac{\lambda_{1}- \lambda_0}{\sigma_{S_{1}}(n(\theta)) - \sigma_{S_{0}}(n(\theta))} (\sigma_{S_{i}}(n(\theta)) - \sigma_{S_{i-1}}(n(\theta))),
\end{align*}
and the right-hand side is summable over negative indices $i\leq 0$, so that $\lambda_{i} \to \underline{\lambda} \in \RR$ as $i \to -\infty$. In all cases $(\lambda_i)_{i\in I}$ is an increasing sequence bounded from below.

\smallskip
\noindent
{\em Local interpolation.} We fix $i $ in $ I $ and consider the function $G_i$ described in Proposition~\ref{prop:diffeoMorphism} with $T_+ = S_{3i + 3} = T_{i+1}$, $T_- = S_{3i} = T_i$, $\lambda_+ = \lambda_{3i+3}$, $\lambda_{1} = \lambda_{3i+2}$, $\lambda_0 = \lambda_{3i+1}$, $\lambda_- = \lambda_{3i}$, $e_0 = \epsilon_i$, $e_1 = \epsilon_{i+1}$. By linearity, we have for any $(\lambda,\theta) \in [\lambda_-,\lambda_+] \times \RR / 2\pi \Z $,
\begin{align*}
				\left\langle G_i(\lambda,\theta),n(\theta)\right\rangle = \tilde \gamma_{\Theta}(\lambda)
\end{align*}
where $ \tilde \gamma_{\Theta}$ is as in Lemma~\ref{lem:convexInterpol} with input data $q_0 = \left\langle c_{3i}(\theta), n(\theta)\right\rangle = \sigma_{S_{3i}}(n(\theta))$, $q_1 = \left\langle c_{3i+3}(\theta), n(\theta)\right\rangle = \sigma_{S_{3i+3}}(n(\theta))$, and  $\lambda_-,\lambda_0,\lambda_1,\lambda_+,e_1,e_0$ as already described. This corresponds to the Bernstein approximation of the piecewise affine interpolation between the points
\begin{align}
                &\left( \lambda_{3i}, \sigma_{S_{3i}}(n(\theta))\right)\nonumber\\
                &\left( \lambda_{3i+1}, \sigma_{S_{3i+1}}(n(\theta))\right),\nonumber \\
                &\left( \lambda_{3i+2}, \sigma_{S_{3i+2}}(n(\theta))\right),\nonumber \\
				&(\lambda_{3i+3}, \sigma_{3i+3}(n(\theta))),	\label{eq:piecewiseAffineInterpolExplicit}
\end{align}
By construction of $\left( K_i \right)_{i \in I }$, we have for all $\theta$,
\begin{align*}
				0 < \frac{\sigma_{S_{3i+3}}(n(\theta)) - \sigma_{S_{3i+2}}(n(\theta))  }{\lambda_{3i+3} - \lambda_{3i + 2}} \leq	\frac{\sigma_{S_{3i+2}}(n(\theta)) - \sigma_{S_{3i+1}}(n(\theta))  }{\lambda_{3i+2} - \lambda_{3i + 1}} \leq \frac{\sigma_{S_{3i+1}}(n(\theta)) - \sigma_{S_{3i}}(n(\theta))  }{\lambda_{3i+1} - \lambda_{3i}}.
\end{align*}
Whence the affine interpolant  between points in \eqref{eq:piecewiseAffineInterpolExplicit} is strictly increasing and concave, and by using the shape preserving properties of Bernstein polynomials, $\left\langle G_i(\lambda,\theta),n(\theta)\right\rangle$ has strictly positive derivative. As a consequence $G_i$ is a diffeomorphism and its derivatives are as  in Proposition \ref{prop:diffeoMorphism}. Furthermore
\begin{align*}
				\lambda \mapsto \left\langle G_i(\lambda,\theta),n(\theta)\right\rangle  
\end{align*}
is a $C^k$ concave function of $\lambda$.

\smallskip
\noindent
{\em Global interpolation.} Recall that $\underline{\lambda} = \inf_{i \in I} \lambda_i>-\infty$ and set  $\bar{\lambda} = \sup_{i \in I} \lambda_i\in (-\infty,+\infty]$. For any $\lambda \in (\underline{\lambda},\bar{\lambda})$, there exists a unique $i_\lambda \in I$ such that $\lambda \in [\lambda_{3i_\lambda}, \lambda_{3i_\lambda+3})$. Define 
\begin{align*}
    G \colon (\underline{\lambda}, \bar{\lambda}) \times \RR / 2\pi \Z &\mapsto \RR^2 \\
    (\lambda,\theta) &\mapsto G_{i_\lambda} (\lambda, \theta).
\end{align*}
Fix $i$ in $I$. The boundary of $T_{i+1}$ is given by $G_{i+1}(\lambda_{3i+3}, \RR / 2 \pi \Z ) = G_{i}(\lambda_{3i+3}, \RR / 2 \pi \Z )$ with actually
\begin{equation}\label{coinc}
G_{i+1}(\lambda_{3i+3}, \theta) = G_{i}(\lambda_{3i+3}, \theta ), \mbox{ for all }\theta \mbox{ in }\RR / 2 \pi \Z.\end{equation}

Since $K_{3i} = 1$, we have
\begin{align*}
				\lambda_{3i+1} - \lambda_{3i} = \lambda_{3i} - \lambda_{3i-1}.
\end{align*}
The expressions of the derivatives in Proposition \ref{prop:diffeoMorphism} and \eqref{coinc} ensure that the derivatives of $G_{i+1}$ and $G_{i}$ agree on ${\lambda_{3i+3}} \times \RR / 2 \pi \Z $ up to order $k$. Hence $G$ is a local diffeomorphism. Bijectivity of each $G_i$ ensure that $G$ is also bijective and thus $G$ is a diffeomorphism.
 Furthermore
\begin{align*}
				\lambda \mapsto \left\langle G(\lambda,\theta),n(\theta)\right\rangle  
\end{align*}
is  $C^k$ piecewise concave and thus concave. 

\smallskip
\noindent
{\em Extending $G$.} If $I = \NN$, we may assume without loss of generality that $S_0 = B$ the Euclidean ball and $S_1 = 5/3 S_0$, which corresponds to $\epsilon_0 = 2/3$, eventually after adding a set in the list and rescaling. Let $\phi$ denote the function described in Lemma \ref{lem:interpolationAroundZero} and $G_{-1}$ be described as in Lemma \ref{lem:diffGauge2}. This allows to extend $G$ for $\lambda \in [0,1]$, $G$ is then $C^k$ on $(0,\bar{\lambda}) \times\RR / 2\pi \ZZ$ by using Lemma \ref{lem:diffGauge2} and Proposition \ref{prop:diffeoMorphism}. This does not affect the differentiability, monotonicity and concavity properties of $G$. 

\smallskip
\noindent
{\em Defining the interpolant $f$.} We assume without loss of generality that $\underline{\lambda} = 0$. We set $f$ to be the first component of the inverse of $G$ so that it is defined on $G^{-1}\left( (0,\bar{\lambda}) \times \RR / 2\pi \Z \right)$. We extend $f$ as follows:
\begin{itemize}
				\item $f(0) = 0$ if $I = \NN$,
				\item $f = 0$ on $\cap_{i \in I } T_i$ if $I = \ZZ$.
\end{itemize}
Since $G$ is $C^k$ and non-singular on $ (0,\bar{\lambda}) \times \RR / 2\pi \Z$, the inverse mapping theorem ensures that $f$ is $C^k$ on $\mathrm{int}(\T) \setminus \arg\min_{\T} f$.

\smallskip
\noindent
{\em Convexity of $f$.} For any $\theta $ in $ \RR/ 2\pi \Z $,
\begin{align*}
(0, \bar{\lambda}) &\mapsto \RR_+\\
				\lambda &\mapsto \sup_{z\in[f \leq \lambda]} n(\theta)^Tz 
\end{align*}
is equal to $\left\langle G(\lambda,\theta),n(\theta)\right\rangle$ which is concave. It can be extended at  $\lambda = 0$ by continuity. This preserves concavity hence, using Theorem~\ref{th:crouzeix}, we have proved that $f$ is convex and $C^k$ on $\T\setminus \argmin_{\T}f$.

\smallskip
\noindent
{\em Smoothness around the argmin and Hessian positivity.} If $I = \NN$, then the interpolant defined in Lemma \ref{lem:diffGauge2} ensures that $f$ is proportional to the norm squared around $0$. Hence it is $C^k$ around $0$ with positive definite Hessian. We may compose $f$ with the function $g \colon t \mapsto \sqrt{t^2 + 1} + t$ which is increasing and has positive second derivative. This ensures that the resulting Hessian is positive definite outside $\argmin f$ and thus everywhere since
\begin{align*}
				\nabla^2 g \circ f = g' \nabla^2 f + g'' \nabla f \nabla f^T
\end{align*}
 is positive definite thanks to the expressions for the Hessian of $f$ in Proposition \ref{prop:diffeoMorphism} 

If $I = \ZZ$, we let all the derivatives of $f$  vanish around the solution set. The smoothing Lemma \ref{lem:CkSmoothing} applies and provides a function $\phi$ with positive derivative on $(0,+\infty)$, such that $\phi \circ f$ is convex,  $C^k$ with prescribed sublevel sets. Furthermore, we remark that 
\begin{align*}
				\nabla^2 \phi \circ f = \phi' \nabla^2 f + \phi'' \nabla f \nabla f^T.
\end{align*}
We may compose $\phi \circ f$ with the function $g \colon t \mapsto \sqrt{t^2 + 1} + t$ which is increasing with positive second derivative, the expressions for the Hessian of $f$ in Proposition \ref{prop:diffeoMorphism} ensure that the resulting Hessian is positive definite out of $\argmin f$.\end{proof}

\begin{remark}[Interpolation of symmetric rings]\label{rem:alignedLevelSets}{\rm In view of Remark~\ref{rem:alignedInterpolation}, if we have $T_{i+1} = \alpha T_i$ for some $0<\alpha < 1$ and $i $ in $ \Z $, then the interpolated level sets between $T_i$ and $T_{i+1}$ are all of the form $s T_i$ for $\alpha \leq s \leq 1$.}
\end{remark}

\begin{remark}[Strict convexity]\label{rem:strictConvexity}{\rm 
				Recall that strict convexity of a differentiable function amounts to the injectivity of its gradient. In Theorem \ref{th:smoothinterp} if there is a unique minimizer, then the  invertibility of the Hessian outside  argmin~$f$ ensures that our interpolant is strictly convex (note that this is automatically the case if $I = \NN$).}
\end{remark}

\subsection{Considerations on Legendre functions}\label{s:legendre}

The following proposition provides some interpolant with additional properties as global Lipschitz continuity and finiteness properties for the dual function. At this stage these results appear as 
merely technical but they happen 
to be decisive in the 
construction of counterexamples 
involving  Legendre 
functions. The properties of Legendre functions can be found in  \cite[Chapter 6]{rockafellar1970convex}. We simply recall here that, given a convex body $C$ of $\R^p$, a convex function $h:C\to \R$ is {\em Legendre} if it is differentiable on $\inte C$ and if $\nabla h$ defines a bijection from $\inte C$ to $\nabla h(\inte C)$ with in addition 
$$\lim_{\begin{array}{l}
x\in \inte C\\
x\to z\end{array}} \|\nabla h(x)\|=+\infty,$$
for all $z$ in $\bd C$. We also assume that $\mbox{epi}\,f:=\{(x,\lambda):f(x)\leq \lambda\}$ is closed in $\R^{p+1}$. The {\em Legendre conjugate} or {\em dual function} of $h$ is defined through
$$h^*(z)=\sup\left\{\langle z,x\rangle -h(x):x \in C\right\},$$
for $z$ in $\R^p$, and its domain is $D:=\left\{z\in\R^p:h(z)<+\infty\right\}.$ The function $h^*$ is differentiable on the interior of $D$,  and the inverse of $\nabla h:\inte C \to \inte D$ is $\nabla h^*:\inte D \to \inte C$.

We start with a simple technical lemma on the compactness of the domain of a Legendre function.
\begin{lemma}
				Let $h \colon \RR^2 \mapsto \RR$ be a globally Lipschitz continuous Legendre function, and set $D = \mathrm{int}(\mathrm{dom}(h^*))$ where $h^* \colon \RR^2 \mapsto \RR$ is the conjugate of $h$. For each $\lambda \geq \min_{\RR^2}h$ let  $\sigma_\lambda$ be the support function associated to the set $\left\{ z \in \RR^2, \, h(z) \leq \lambda \right\}$. The following are equivalent
				\begin{itemize}
								\item[(i)] $h^*(x)\leq 0$ for all $x \in D$.
								\item[(ii)] For all $y \in \RR^2$, $\sigma_{h(y)}(\nabla h(y)) \leq h(y)$.
				\end{itemize}
				In both cases $h^*$ has compact domain.
				\label{lem:legendreBounded}
\end{lemma}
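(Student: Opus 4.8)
The plan is to reduce both (i) and (ii) to the single pointwise inequality
\begin{align*}
\langle \nabla h(y), y\rangle \le h(y) \qquad \text{for all } y \in \RR^2,
\end{align*}
and then to establish compactness of $\mathrm{dom}(h^*)$ separately. For the first reduction I would invoke the standard Legendre duality facts recalled in Section~\ref{s:legendre} (see \cite[Ch.~26]{rockafellar1970convex}): since $\mathrm{dom}(h)=\RR^2$, the map $\nabla h$ is a bijection from $\RR^2$ onto $D=\mathrm{int}(\mathrm{dom}(h^*))$, and the Fenchel--Young equality holds at conjugate pairs $(y,\nabla h(y))$, i.e.\ $h^*(\nabla h(y))=\langle \nabla h(y),y\rangle - h(y)$. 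As $\nabla h(y)$ ranges over all of $D$ when $y$ ranges over $\RR^2$, this identifies (i) with the displayed inequality.

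Next I would treat (ii) via the elementary identity $\sigma_{h(y)}(\nabla h(y))=\langle \nabla h(y),y\rangle$, valid for every $y$. The inequality ``$\ge$'' is clear because $y$ lies in the sublevel set $L_y=\{z:h(z)\le h(y)\}$; the inequality ``$\le$'' follows from the subgradient inequality: for $z\in L_y$ one has $h(y)\ge h(z)\ge h(y)+\langle \nabla h(y),z-y\rangle$, whence $\langle \nabla h(y),z\rangle\le\langle \nabla h(y),y\rangle$, and one takes the supremum over $z\in L_y$. Substituting this identity into the statement of (ii) turns it into exactly the same displayed inequality; hence (i) $\Leftrightarrow$ (ii). The argument is uniform in $y$ (the case $\nabla h(y)=0$ being trivial), so no minimizer of $h$ needs to be assumed to exist.

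Finally, for compactness of $\mathrm{dom}(h^*)$: boundedness is immediate from global Lipschitzness --- if $L$ is a Lipschitz constant for $h$ and $\|x\|>L$, testing $h^*(x)=\sup_z\big(\langle x,z\rangle-h(z)\big)$ along the ray $z=tx/\|x\|$, $t\to+\infty$, forces $h^*(x)=+\infty$, so $\mathrm{dom}(h^*)\subseteq \{\|x\|\le L\}$. For closedness I would use (i) (equivalently (ii)): the set $\mathrm{dom}(h^*)$ is convex with nonempty interior $D=\nabla h(\RR^2)$, hence contained in $\overline{D}$; conversely, given $\bar x\in\overline{D}$, the half-open segment from an interior point $x_0\in D$ to $\bar x$ stays in $D$, so $h^*\le 0$ along it, and lower semicontinuity of the conjugate yields $h^*(\bar x)\le 0<+\infty$, i.e.\ $\bar x\in\mathrm{dom}(h^*)$. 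Thus $\mathrm{dom}(h^*)=\overline{D}$ is closed and bounded, hence compact. I expect the only mildly delicate point to be this closedness step --- namely invoking cleanly that $\mathrm{int}(\mathrm{dom}(h^*))=\nabla h(\RR^2)$ and passing to the boundary via lower semicontinuity; everything else is bookkeeping with Fenchel duality and the subgradient inequality.
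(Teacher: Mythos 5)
Your proposal is correct and follows essentially the same route as the paper: the Fenchel--Young equality at conjugate pairs combined with the identity $\sigma_{h(y)}(\nabla h(y))=\langle\nabla h(y),y\rangle$ (normality of the gradient to the sublevel set) and the bijectivity of the gradient map yields the equivalence, exactly as in the paper's computation of $h^*(z)=\sigma_{h(y)}(\nabla h(y))-h(y)$ with $y=\nabla h^*(z)$. Your compactness argument (boundedness from the Lipschitz constant, closedness from boundedness above plus lower semicontinuity) simply fills in the details the paper states in one line, so there is nothing to object to.
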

\begin{proof} Let us establish beforehand the following formula 
\begin{align}\label{e:h}
								h^*(z) &= \sigma_{h(y)}(\nabla h(y))- h(y),
				\end{align}
				with $y = \nabla h^* (z)$ and $y\in\R^2$. Since	$h^*$ is Legendre, we have for all $y$ in $D$,
				\begin{align*}
								h^*(z) &= \sup_{y \in \RR^2} \left\langle z, y  \right\rangle - h(y) =  \left\langle z, \nabla h^*(z) \right\rangle - h(\nabla h^*(z)).
				\end{align*}
				We have, setting $y = \nabla h^*(z)$
				\begin{align*}
								\left\langle z, \nabla h^*(z) \right\rangle  = \left\langle \nabla h(y),y \right\rangle  = \sigma_{h(y)}(\nabla h(y))
				\end{align*}
				because $\nabla h(y)$ is normal to the sublevel set of $h$ which contains $y$ in its boundary. Hence we have $h^*(z) = \sigma_{h(y)}(\nabla h(y))- h(y)$
				with $y = \nabla h^* (z)$, that is \eqref{e:h} holds. Since $\nabla h^* \colon D \mapsto \RR^2$ is a bijection, the equivalence follows. In this case the domain of $h^*$ is closed because $h^*$ is bounded and lower semicontinuous. The domain  is also bounded by the Lipschitz continuity of $h$, whence compact. 
\end{proof}

\begin{proposition}[On Legendre interpolation]
        \label{th:globallyLipshitz}
				Let $\left( S_i \right)_{i\in \NN}$ be such that for any $i $ in $ I$, $T_- = S_{i}$,  $T_+ = S_{i+1}$ satisfy Assumption~\ref{ass:curvature} and there exists a sequence $\left( \epsilon_i \right)_{i \in \NN}$ in $(0,1)$ such that for all $i\geq 1$, $(1 - \epsilon_i)^{-1} S_{3i-1} = (1 +\epsilon_i)^{-1} S_{3i+1}=S_{3i}$.\\ Assume in addition that,
				\begin{align}
								\inf_{\|x\| = 1}& \sigma_{S_i}(x) - \sigma_{S_{i-1}}(x) =1 +  O\left(\frac{1}{i^3}\right)&\mbox{ (non degeneracy)}\label{nd}, \\
								\sup_{\|x\|=1}& \left|\frac{\sigma_{S_{i+1}}(x) - \sigma_{S_i}(x)}{\sigma_{S_{i}}(x) - \sigma_{S_{i-1}}(x)} - 1\right| = O\left( \frac{1}{i^3} \right)& \mbox{ (moderate growth)}\label{mg}.
				\end{align}
				Then there exists a convex $C^k$  function $h \colon \RR^2 \mapsto \RR$, such that 
				\begin{itemize}
								\item For all $i $ in $ \NN$, $S_{3i}$ is a sublevel set of $h$,
								\item $h$ has positive definite Hessian,
								\item $h$ is globally Lipschitz continuous,
								\item $h^*$ has a compact domain $D$ and is $C^k$ and strictly convex on $\mathrm{int}(D)$. 
				\end{itemize}
\end{proposition}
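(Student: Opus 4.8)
The plan is to run the construction of Theorem~\ref{th:smoothinterp} with $I=\NN$ on the sequence $T_i:=S_{3i}$. The hypothesis $(1-\epsilon_i)^{-1}S_{3i-1}=(1+\epsilon_i)^{-1}S_{3i+1}=S_{3i}$ says exactly that the $S_i$ of the proposition are the sets produced by the \emph{preconditioning} step of that proof (with $T_i=S_{3i}$), so the maps $G_i$, the reparametrizations $\tilde\gamma_\Theta$, and the values $(\lambda_i)_{i\in\NN}$ from \eqref{value} are directly at our disposal, and the output is a $C^k$ convex $f\colon\T\mapsto\RR$ with the $S_{3i}$ as sublevel sets and, by item~(iii) for $I=\NN$, $\nabla^2 f\succ0$ on all of $\T$ (one may further compose with $g\colon t\mapsto\sqrt{t^2+1}+t$; this changes nothing below since $g',g''$ are bounded and positive). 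It then remains to prove: $(a)$ $\T=\RR^2$ and $f$ is globally Lipschitz; $(b)$ $f$ is Legendre, and $f^*$ is $C^k$ and strictly convex on $\mathrm{int}(\dom f^*)$; $(c)$ $\dom f^*$ is compact. The function $h$ of the statement will be $f$ shifted by a suitable constant.

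For $(a)$, from \eqref{value} the increments $\delta_i:=\lambda_{i+1}-\lambda_i$ satisfy $\delta_i=\delta_0\prod_{j=1}^i K_j$ with $K_j=\max_{\|x\|=1}\frac{\sigma_{S_{j+1}}(x)-\sigma_{S_j}(x)}{\sigma_{S_j}(x)-\sigma_{S_{j-1}}(x)}=1+O(1/j^3)$ by the moderate growth assumption \eqref{mg}; hence $\prod_j K_j$ converges and $\delta_i\in[\underline\delta,\overline\delta]\subset(0,\infty)$ uniformly. Chaining the ratios in \eqref{mg} and using the lower bound of \eqref{nd} (together with positivity of $\sigma_{S_1}-\sigma_{S_0}$ on the sphere) gives $\sigma_{S_{j+1}}(x)-\sigma_{S_j}(x)\in[\underline s,\overline s]\subset(0,\infty)$ uniformly over $\|x\|=1$, $j\in\NN$; so $\sigma_{S_i}(x)$ grows linearly in $i$, whence $\bigcup_i S_{3i}=\RR^2$ and $\T=\RR^2$. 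The slopes of the piecewise affine support-function interpolants \eqref{eq:piecewiseAffineInterpolExplicit} are the quotients $(\sigma_{S_{j+1}}(n(\theta))-\sigma_{S_j}(n(\theta)))/\delta_j\in[\underline s/\overline\delta,\overline s/\underline\delta]$, so by shape preservation of Bernstein polynomials the same two-sided positive bound holds for $\partial_\lambda\langle G(\lambda,\theta),n(\theta)\rangle=\tilde\gamma_\Theta'(\lambda)$; since $\|\nabla f(x)\|=1/\tilde\gamma_\Theta'(f(x))$ away from $\argmin f$ by \eqref{eq:gradient} and $\nabla f$ is bounded near $0$ (there $f\propto\|\cdot\|^2$), $f$ is globally Lipschitz.

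For $(b)$, $f$ is finite on $\RR^2$ and $C^k$, strictly convex (as $\nabla^2 f\succ0$), $\epi f$ is closed, and the boundary blow-up condition is vacuous because $\dom f=\RR^2$; thus $f$ is Legendre and $\nabla f\colon\RR^2\to\mathrm{int}(\dom f^*)$ is a bijection, which $\nabla^2 f\succ0$ upgrades to a $C^{k-1}$ diffeomorphism. Then $f^*=\langle\cdot,\nabla f^*\rangle-f\circ\nabla f^*$ is $C^k$ on $\mathrm{int}(\dom f^*)$ and $\nabla^2 f^*=(\nabla^2 f\circ\nabla f^*)^{-1}\succ0$, i.e. $f^*$ is strictly convex there. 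For $(c)$, global Lipschitzness gives $\dom f^*\subseteq\overline B(0,L)$, hence boundedness; closedness will follow from Lemma~\ref{lem:legendreBounded} once, after a constant shift, we have $h^*\le0$. Writing $x=G(\lambda,\theta)$ one computes $\langle\nabla f(x),x\rangle=\tilde\gamma_\Theta(\lambda)/\tilde\gamma_\Theta'(\lambda)$, so $f^*(\nabla f(x))=\tilde\gamma_\Theta(\lambda)/\tilde\gamma_\Theta'(\lambda)-\lambda$. The refined bookkeeping from the convergent products — the tails $\prod_{l\ge i}(1+O(1/l^3))=1+O(1/i^2)$ and $\lambda_i\sim i\delta_\infty$ — gives $\tilde\gamma_\Theta'(\lambda)=s_\infty(\theta)(1+O(\lambda^{-2}))$ and $\tilde\gamma_\Theta(\lambda)=s_\infty(\theta)\lambda+b(\theta)+o(1)$ with $s_\infty,b$ continuous on $\RR/2\pi\Z$; therefore $f^*(\nabla f(x))\to b(\theta)/s_\infty(\theta)$ as $\|x\|\to\infty$, and being continuous on $\nabla f(\RR^2)=\mathrm{int}(\dom f^*)$ it is bounded above, say by $c$. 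Setting $h=f+c$ yields $h^*=f^*-c\le0$, so Lemma~\ref{lem:legendreBounded} applies and $\dom h^*=\dom f^*$ is compact; the shift affects none of the other assertions.

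The main obstacle is step $(c)$: not just proving $\|\nabla f\|$ bounded, but extracting the sharp asymptotics $\tilde\gamma_\Theta(\lambda)=s_\infty(\theta)\lambda+O(1)$ uniformly in $\theta$ — that the level sets of $f$ are, up to a bounded perturbation, homothets of a fixed convex body at infinity. This is precisely what the cubic rates in \eqref{nd}--\eqref{mg} are for: they make the relevant infinite products converge with $O(1/i^2)$ tails, so that after one integration in $\lambda$ the remainder stays $O(1)$; a merely summable-at-the-square (or non-summable) rate would leave a divergent remainder and could break closedness of $\dom f^*$.
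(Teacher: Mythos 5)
Your proposal is correct and follows essentially the same route as the paper: build the interpolant as in Theorem~\ref{th:smoothinterp}, get global Lipschitz continuity from a uniform positive lower bound on $\partial_\lambda\langle G(\lambda,\theta),n(\theta)\rangle$, and obtain compactness of $\dom h^*$ from Lemma~\ref{lem:legendreBounded} by showing $\sigma_{h(y)}(\nabla h(y))-h(y)$ is uniformly bounded above thanks to the cubic rates \eqref{nd}--\eqref{mg} and then shifting values. The only (cosmetic) difference is that you extract the asymptotics $\tilde\gamma_\Theta(\lambda)=s_\infty(\theta)\lambda+b(\theta)+o(1)$, $\tilde\gamma_\Theta'(\lambda)=s_\infty(\theta)(1+O(\lambda^{-2}))$ directly from the convergent products, whereas the paper integrates $-\lambda\langle\partial^2_\lambda G,n\rangle=O(1/i^3)$ via the Bernstein derivative bound \eqref{e:born}; just note that your ``continuous on $\mathrm{int}(D)$ hence bounded above'' step really rests on the uniformity in $\theta$ of these expansions (plus compactness of finite sublevel regions), which your estimates do provide.
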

\begin{proof}
				The construction of $h$ follows  the exact same principle as that of   Theorem~\ref{th:smoothinterp}. This ensures that the first two points are valid. Note that equation \eqref{nd} implies that the sets sequence grows by at least a fixed amount in each direction as $i$ grows. Hence we have $\T = \RR^2$. \\
				\textit{Global Lipschitz continuity of $h$:}
			    The values of $h$ are defined through 
	            \begin{align*}
								\lambda_{i+1}-\lambda_{i}&=K_i(\lambda_{i}-\lambda_{i-1}),\;\forall i\in \NN^*\\
								K_i &= \max_{\|x\| = 1} \frac{\sigma_{S_{i+1}}(x)  - \sigma_{S_i}(x)}{ \sigma_{S_{i}}(x)  - \sigma_{S_{i-1}}(x)} \in \left( 0, + \infty \right),
				\end{align*}
				so that $K_i = 1 + O(1/i^3)$ thanks to equation \eqref{mg}. 
				Note that the moderate growth assumption entails  
                \begin{align}
                        \sup_{i \in \NN} \sigma_{S_{i+1}}(x) - \sigma_{S_{i}}(x) = O\left( \prod_{i \in \NN^*} K_i\right) = O(1). 
                        \label{eq:suportDiffBounded}
                \end{align}
				For $i\geq 1$, one has
\begin{equation}\label{for}
    \lambda_{i+1}-\lambda_{i}=\prod_{1 \leq j \leq i} K_j(\lambda_1-\lambda_0).
\end{equation}

On the other hand using the bounds \eqref{mg}, \eqref{nd} and the identity \eqref{for}, there exists a constant $\kappa>0$ such that for all $i \geq 1$,  all $\theta $ in $ \RR / 2 \pi \Z $, 
\begin{align}
				\frac{\sigma_{S_{i+1}}(n(\theta)) - \sigma_{S_{i}}(n(\theta))  }{\lambda_{i+1} - \lambda_{i}} =	\frac{\left(\sigma_{S_{i+1}}(n(\theta)) - \sigma_{S_{i}}(n(\theta))\right)  }{\prod_{j=1}^i K_j (\lambda_{1} - \lambda_{0})} \geq \kappa>0.
				\label{eq:tempDerivLambda}
\end{align}

By the interpolation properties described in Lemma \ref{lem:convexInterpol} the function $\left\langle G(\lambda,\theta),n(\theta)\right\rangle$ constructed in Theorem \ref{th:smoothinterp} has derivative with respect to $\lambda$ greater than $\kappa$. Recalling the expression of the gradient as given in Proposition \ref{prop:diffeoMorphism} (and the concavity of $G$ with respect to $\lambda$), this shows that 
$$\|\nabla h(x)\|\leq \frac{1}{\kappa}$$
for all $x $ in $ \RR^2$, and by the mean value theorem, $h$ is globally Lipschitz continuous on $\RR^2$.

\textit{Properties of the dual function:} $h$ is Legendre, its conjugate $h^*$ is therefore Legendre. From the definiteness of $\nabla^2 h$ and the fact that $\nabla h \colon \RR^2 \mapsto \mathrm{int}(D)$ is a bijection, we deduce  that $h^*$ is  $C^k$ by the inverse mapping theorem. So the only property which we need to establish is that $h^*$ has a compact domain, in other words, using Lemma~\ref{lem:legendreBounded},   it is sufficient to show that $\sup_{x \in \mathrm{int} D} h^*(x) \leq 0$.  

Using the notation of the proof of Theorem \ref{th:smoothinterp}, we will show that it is possible to verify that, for all $\lambda,\theta$ in the domain of $G$ 
\begin{align}\label{eq:toBeCheckedForBoundedness}
\frac{\left\langle n(\theta), G(\lambda,\theta)\right\rangle}{\left\langle \frac{\partial G}{\partial \lambda}(\lambda,\theta), n(\theta) \right\rangle} \leq \lambda.
\end{align}
Equation \eqref{eq:toBeCheckedForBoundedness} is indeed the coordinate form of the characterization given in Lemma \ref{lem:legendreBounded}. 
 Let us observe that
\begin{align}\label{e:int}\frac{\partial}{\partial \lambda} \left( \left\langle n(\theta), G(\lambda,\theta)\right\rangle - \lambda \left\langle \frac{\partial G}{\partial \lambda}(\lambda,\theta), n(\theta) \right\rangle \right) = -\lambda \left\langle \frac{\partial^2 G}{\partial \lambda^2}(\lambda,\theta), n(\theta) \right\rangle,
\end{align}
 and  since $G$ is concave, the right hand side is positive. 
  
 Assume that we have proved that,
 \begin{equation}
 \label{e:lim}
 \lambda \mapsto \sup_\theta -\lambda \left\langle \frac{\partial^2 G}{\partial \lambda^2}(\lambda,\theta), n(\theta) \right\rangle
 \end{equation}
has finite integral as $\lambda \to \infty$. Since the function 
 \begin{equation}
 \label{e:funTheta}
 \theta \mapsto \lambda \left\langle \frac{\partial^2 G}{\partial \lambda^2}(\lambda,\theta), n(\theta) \right\rangle
 \end{equation}
is continuous on $\R/2\pi\ZZ$ for any $\lambda$, Lebesgue dominated convergence theorem would ensure that 
\begin{align*}
    \theta \mapsto \int_{\lambda \geq \lambda_0} -\lambda \left\langle \frac{\partial^2 G}{\partial \lambda^2}(\lambda,\theta), n(\theta) \right\rangle d \lambda
\end{align*}
is continuous in $\theta$, so that: 
\begin{align*}
				 \sup_{\theta} \left[\lim_{\lambda \to \infty} \left\langle n(\theta), G(\lambda,\theta)\right\rangle - \lambda \left\langle \frac{\partial G}{\partial \lambda}(\lambda,\theta), n(\theta) \right\rangle\right]<+\infty.
\end{align*}
Shifting values if necessary, we could assume that this upper bound is equal to zero to obtain equation \eqref{eq:toBeCheckedForBoundedness}. The latter being the condition required in Lemma \ref{lem:legendreBounded}, we would have reached a conclusion.

Let us therefore establish that \eqref{e:int} is integrable over $\R_+$. Recall that $G$ is constructed using the Bernstein interpolation given in Lemma \ref{lem:convexInterpol} between successive values of $\lambda$. As a result, for a fixed $\theta$, the function $\left\langle n(\theta), G(\lambda,\theta)\right\rangle $ is the interpolation of the piecewise affine function interpolating
\begin{align}
                &\left( \lambda_{3i}, \sigma_{S_{3i}}(n(\theta))\right)\nonumber\\
                &\left( \lambda_{3i+1}, \sigma_{S_{3i+1}}(n(\theta))\right),\nonumber \\
                &\left( \lambda_{3i+2}, \sigma_{S_{3i+2}}(n(\theta))\right),\nonumber \\
								&(\lambda_{3i+3}, \sigma_{3i+3}(n(\theta))),
								\label{eq:piecewiseAffineInterpolExplicit2}
\end{align}
as in equation \eqref{eq:piecewiseAffineInterpolExplicit}. This interpolation is concave and increasing.

Assumption \eqref{mg} ensures that $K_j = 1 + O(1/j^3)$. Then
\begin{align*}
				\prod_{j=1}^m K_j = \bar{K} + O(1 / j^{2})
\end{align*}
where $\bar{K}$ is the finite, positive limit of the product (we can for example perform integral series comparison after taking the logarithm).

The recursion on the values writes for all $i\geq1$
\begin{align*}
				\lambda_{i+1} = \lambda_i + K_i(\lambda_i - \lambda_{i-1}),
\end{align*}
so that 
\begin{align*}
				\lambda_{i+1} - \lambda_i= (\lambda_1 - \lambda_0) \prod_{j=1}^i K_i = (\lambda_1 - \lambda_0) \bar{K} + O(1/i^{2}).
\end{align*}

This means that the gap  between consecutive values 
 tends to be constant. Thus by \eqref{e:deg} in Lemma~\ref{lem:convexInterpol}, see also  Remark~\ref{rem:alignedInterpolation}, the degree of the Bernstein  interpolants is bounded. 
Using this bound together with inequality \eqref{e:born}, providing bounds for the derivatives of Bernstein's polynomial,    ensure that, for all $\lambda $ in $ [\lambda_{3i}, \lambda_{3i+3})$:  
\begin{align*}
				&\left|\left\langle \frac{\partial^2 G}{\partial \lambda^2}(\lambda,\theta), n(\theta) \right\rangle\right| \\
				=\,& O\left(  \max _{j = 3i+2, 3i+1}\left|  \frac{\sigma_{S_{j+1}}(n(\theta))   - \sigma_{S_{j}}(n(\theta))}{\lambda_{j+1} - \lambda_{j}} -	\frac{\sigma_{S_{j}}(n(\theta)) - \sigma_{S_{j-1}}(n(\theta))  }{\lambda_{j} - \lambda_{j-1}}  \right| \right).
\end{align*}
Now for any $j = 3i+2, 3i+1$, 
\begin{align*}
    &\left|  \frac{\sigma_{S_{j+1}}(n(\theta))   - \sigma_{S_{j}}(n(\theta))}{\lambda_{j+1} - \lambda_{j}} -	\frac{\sigma_{S_{j}}(n(\theta)) - \sigma_{S_{j-1}}(n(\theta))  }{\lambda_{j} - \lambda_{j-1}}  \right| \\
    =\,& \frac{1}{\lambda_{j+1} - \lambda_{j}} \left|  (\sigma_{S_{j+1}}(n(\theta))   - \sigma_{S_{j}}(n(\theta))) -	\frac{\lambda_{j+1} - \lambda_{j}}{\lambda_{j} - \lambda_{j-1}} (\sigma_{S_{j}}(n(\theta)) - \sigma_{S_{j-1}(n(\theta))  }) \right|\\
    =\,& \left(1 / ((\lambda_1 - \lambda_0) \bar{K}) + O(1/i^2)\right)\\
    &\times \left|  (\sigma_{S_{j+1}}(n(\theta))   - \sigma_{S_{j}}(n(\theta))) -	(1 + O(1/i^2)) (\sigma_{S_{j}}(n(\theta)) - \sigma_{S_{j-1}(n(\theta))  }) \right|\\
     =\,& \left(1 / ((\lambda_1 - \lambda_0) \bar{K}) + O(1/i^2)\right)\\
    &\times \left|  (\sigma_{S_{j+1}}(n(\theta))   - \sigma_{S_{j}}(n(\theta))) -	 (\sigma_{S_{j}}(n(\theta)) - \sigma_{S_{j-1}(n(\theta))  }) \right|
\end{align*}
where the last identity follows from the triangle inequality because using $\sigma_{S_{j}}(n(\theta)) - \sigma_{S_{j-1}}(n(\theta)) = O(1)$ in \eqref{eq:suportDiffBounded}.
Hence
\begin{align*}
                &\left|\left\langle \frac{\partial^2 G}{\partial \lambda^2}(\lambda,\theta), n(\theta) \right\rangle\right| \\
				=\,& 
				\left(1 / ((\lambda_1 - \lambda_0) \bar{K}) + O(1/i^2)\right)\\
				&\times O\left( \max _{j = 3i+2, 3i+1}\left|  \sigma_{S_{j+1}}(n(\theta))   - \sigma_{S_{j}}(n(\theta)) -	(\sigma_{S_{j}}(n(\theta)) - \sigma_{S_{j-1}}(n(\theta)) ) \right| \right)\\
				=\,& 
				\left(1 / ((\lambda_1 - \lambda_0) \bar{K}) + O(1/i^2)\right)\\
				&\times O\left(  \max _{j = 3i+2, 3i+1}\left|\sigma_{S_{j}}(n(\theta)) - \sigma_{S_{j-1}}(n(\theta))  \right| \times \left|  \frac{\sigma_{S_{j+1}}(n(\theta))   - \sigma_{S_{j}}(n(\theta))}{\sigma_{S_{j}}(n(\theta)) - \sigma_{S_{j-1}}(n(\theta)) } -	1 \right| \right)\\
				=\,& O(1/i^3),
\end{align*}
where the last inequality follows from \eqref{eq:suportDiffBounded} and \eqref{mg}. 
Now as $i \to \infty$, $\lambda_{3i} \sim \lambda_{3i + 3} \sim i c$ for some constant $c>0$ and
\begin{align*}
				\sup_{\lambda \in [\lambda_{3i}, \lambda_{3i+3}], \theta \in [0, 2 \pi]} - \lambda \left\langle \frac{\partial^2 G}{\partial \lambda^2}(\lambda,\theta), n(\theta) \right\rangle = O(1 / i^2)
\end{align*}
and 
\begin{align*}
				\sup_{ \theta \in [0, 2 \pi]} - \lambda \left\langle \frac{\partial^2 G}{\partial \lambda^2}(\lambda,\theta), n(\theta) \right\rangle
\end{align*}
has finite integral as $\lambda \to \infty$. This implies \eqref{e:lim} and it  concludes the proofs.
 \end{proof}

\section{Smooth convex interpolation for sequences of polygons}

Given a sequence of points  $A_1,\ldots,A_n$, we denote by $A_1\ldots A_n$ the polygon obtained by joining successive points ending the loop with the segment $[A_n,A_1]$. In the sequel we consider mainly convex polygons, so that the vertices $A_1,\ldots,A_n$ are also the extreme points.

The purpose of this section is first to show that polygons can be approximated by smooth convex sets with prescribed normals under weak assumptions. Figure~\ref{fig:illustrPolySmooth} illustrates the result we would like to establish: given a target polygon with 
prescribed normals at its vertices, we wish to construct a smooth convex set interpolating the vertices with the desired normals and whose distance to the polygon is small. 

Then given a sequence of nested polygons, we provide a smooth convex function which interpolates the polygons in the sense described just above.

Given a closed nonempty convex subset $S$ of  $\R^p$  and $x$ in $S$, we recall that the {\em normal cone to $S$ at $x$} is 
$$N_S(x)=\left\{z\in \R^p:\langle z,y-x\rangle \leq 0,\forall y\in S\right\}.$$
Such vectors will often simply called normals (to $S$) at $x$.

\subsection{Smooth approximations of polygons}

\begin{lemma}				\label{lem:approxSegment}
				For any $r_-,r_+ > 0$, $t_- > 0$, $t_+<0$ and $\epsilon > 0$, $m \in \NN$, $m \geq 3$, there exists a strictly concave polynomial function $p \colon [0,1] \mapsto [0,\epsilon]$ such that
				\begin{align*}
								p(0) &= 0 &p(1) &= 0\\
								p'(0) &= t_-& p'(1) &= t_+\\
								p''(0) &= -r_- &p''(1) &= -r_+.\\
								p^{(q)}(0) & = 0 \quad  q \in\{3, \ldots, m\}.
				\end{align*}
\end{lemma}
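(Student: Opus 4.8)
The plan is to realise $p$ as a Bernstein approximation $B_{d,f}$ of a suitable piecewise affine function $f$ on $[0,1]$, using the explicit formulas \eqref{eq:bernsteinPolynomial}--\eqref{eq:bernsteinDerivative} for the derivatives at the endpoints and the shape preservation of Bernstein operators. I fix a large integer $d$ (with $d\ge m+4$, and ultimately $d\to\infty$) and prescribe the sample values $v_k:=f(k/d)$, $k=0,\dots,d$, in such a way that $B_{d,f}$ carries the required jet at $0$ and at $1$; then $f$ is the piecewise affine interpolant of the nodes $(k/d,v_k)$ and $p:=B_{d,f}$ is a polynomial of degree $\le d$.

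First I would pin down the forced values. Evaluating \eqref{eq:bernsteinDerivative} at $x=0$ yields $B_{d,f}^{(q)}(0)=d(d-1)\cdots(d-q+1)\,\Delta^q f(0)$, and $\Delta^q f(0)$ depends only on $v_0,\dots,v_q$; hence the conditions $p(0)=0$, $p'(0)=t_-$, $p''(0)=-r_-$, $p^{(q)}(0)=0$ for $3\le q\le m$ prescribe $\Delta^0 f(0),\dots,\Delta^m f(0)$, and inverting this triangular finite-difference system forces
\begin{equation*}
v_k=\frac{k\,t_-}{d}-\frac{k(k-1)\,r_-}{2\,d(d-1)},\qquad k=0,1,\dots,m.
\end{equation*}
Likewise, \eqref{eq:bernsteinDerivative} at $x=1$ gives $B_{d,f}^{(q)}(1)=d(d-1)\cdots(d-q+1)\,\Delta^q f((d-q)/d)$, so $p(1)=0$, $p'(1)=t_+$, $p''(1)=-r_+$ force $v_d=0$, $v_{d-1}=-t_+/d$ and $v_{d-2}=-2t_+/d-r_+/(d(d-1))$. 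Since $t_->0$, $t_+<0$, $r_\pm>0$, $\epsilon>0$ are fixed, for $d$ large all these forced values lie in $(0,\epsilon)$ whenever $0<k<d$ (they are $O(1/d)$), and the associated slopes $s_k:=d(v_{k+1}-v_k)=t_--r_-k/(d-1)$ for $0\le k\le m-1$, together with $s_{d-2}=t_++r_+/(d-1)>s_{d-1}=t_+$, are already strictly decreasing within each end block.

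The heart of the matter is the choice of the free values $v_{m+1},\dots,v_{d-3}$. They must make the whole sequence $(v_k)_{k=0}^{d}$ strictly discretely concave — every second difference negative, i.e. $s_0>s_1>\dots>s_{d-1}$ — and keep $v_k\le\epsilon$. Equivalently I must choose a strictly decreasing slope profile $s_m>\dots>s_{d-3}$ bridging the fixed end slopes $s_{m-1}\approx t_-$ and $s_{d-2}\approx t_+$, subject only to the telescoping identity $\frac1d\sum_{j=m}^{d-3}s_j=v_{d-2}-v_m=O(1/d)$, and so that the running sums $\frac1d\sum_{j=m}^{k-1}s_j$ (the increments of $v_k$ above $v_m$) never exceed $\epsilon$. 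I would use a profile that drops from $\approx t_-$ to $\approx 0$ over a bounded number of indices near $m$, then decreases extremely slowly within a narrow $o(1)$ band around $0$ over the $\Theta(d)$ middle indices, then drops to $\approx t_+$ over a bounded number of indices near $d-3$: the two steep stretches change the running height by only $O(1/d)$ and the flat stretch by $o(1)$, so every $v_k$ stays below $\epsilon$ once $d$ is large, while the telescoping sum (of size $O(1/d)$) is met by an $O(1/d)$ linear tilt of the middle slopes that does not disturb their strict monotonicity. This trade-off between the \emph{fixed} endpoint slopes and curvatures and the \emph{smallness} constraint $p\le\epsilon$ is the only real obstacle, and it is precisely here that one exploits that $t_\pm$, $r_\pm$, $\epsilon$ are fixed while $d\to\infty$.

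Finally I would conclude. With $f$ the piecewise affine interpolant of the $(k/d,v_k)$ and $p=B_{d,f}$: \eqref{eq:bernsteinDerivative} at $x=0$ and $x=1$ shows $p$ has exactly the prescribed endpoint values and derivatives, including $p^{(q)}(0)=0$ for $3\le q\le m$; taking derivative order two in \eqref{eq:bernsteinDerivative}, all coefficients $\Delta^2 f(k/d)<0$ force $B_{d,f}''<0$ on $[0,1]$, so $p$ is strictly concave, and $p(0)=p(1)=0$ then gives $p>0$ on $(0,1)$; finally $B_{d,f}(x)$ is a convex combination of the values $v_k\in[0,\epsilon]$, so $0\le p\le\epsilon$ on $[0,1]$. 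Hence $p$ has all the required properties.
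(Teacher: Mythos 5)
Your proposal is correct in substance and rests on the same two pillars as the paper's proof: the endpoint formula \eqref{eq:bernsteinDerivative} (derivatives of $B_{d,f}$ at $0$ and $1$ read off finite differences of the node values) and shape preservation (all second differences negative forces $B''_{d,f}<0$, and then $p(0)=p(1)=0$ plus the convex-combination bound give the range $[0,\epsilon]$). Indeed your forced values $v_k=\tfrac{k t_-}{d}-\tfrac{k(k-1)r_-}{2d(d-1)}$ for $k\le m$, and $v_{d-1}=-t_+/d$, $v_{d-2}=-2t_+/d-r_+/(d(d-1))$, are exactly the samples of the paper's truncated parabolas $f_d$, $g_d$ at the corresponding nodes. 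Where you genuinely depart is the interior: the paper fills the middle by taking $h=\min\{f_d,g_d,\epsilon\}$, which is concave and $[0,\epsilon]$-valued by construction, so the whole verification collapses to the single condition \eqref{eq:approxSegment1} ensuring $h$ coincides with the parabolas on the two end blocks $[0,m/d]$ and $[1-m/d,1]$ for $d$ large; you instead design the discrete slope profile $s_m>\dots>s_{d-3}$ by hand, and must check three things yourself: strict monotonicity of the full slope sequence, the exact telescoping constraint $\sum_{j=m}^{d-3}s_j=d(v_{d-2}-v_m)$ (note this sum is a fixed $O(1)$ quantity, roughly $-2t_+-mt_-$, so after the two bounded ramps the middle slopes end up of size $O(1/d)$, not merely $o(1)$ -- your bookkeeping is slightly loose here but the orders do work out), and the height bound $v_k\le\epsilon$, which your ramp/flat/ramp profile delivers once $d$ is large. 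So your route is a more combinatorial, degrees-of-freedom-explicit version of the same Bernstein argument: it makes transparent which node values are forced by the jet conditions and which are free, at the cost of leaving the feasibility of the interior profile as a (plausible and completable, but unspelled) construction, whereas the paper's $\min$-of-parabolas trick buys concavity and the $\epsilon$-cap for free and yields a fully explicit $h$. One further small remark: the paper also arranges vanishing higher differences at the right end, which the statement does not require; your choice to impose them only at $0$ is consistent with the lemma as stated.
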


\begin{figure}
				\centering
				\includegraphics[width=.7\textwidth]{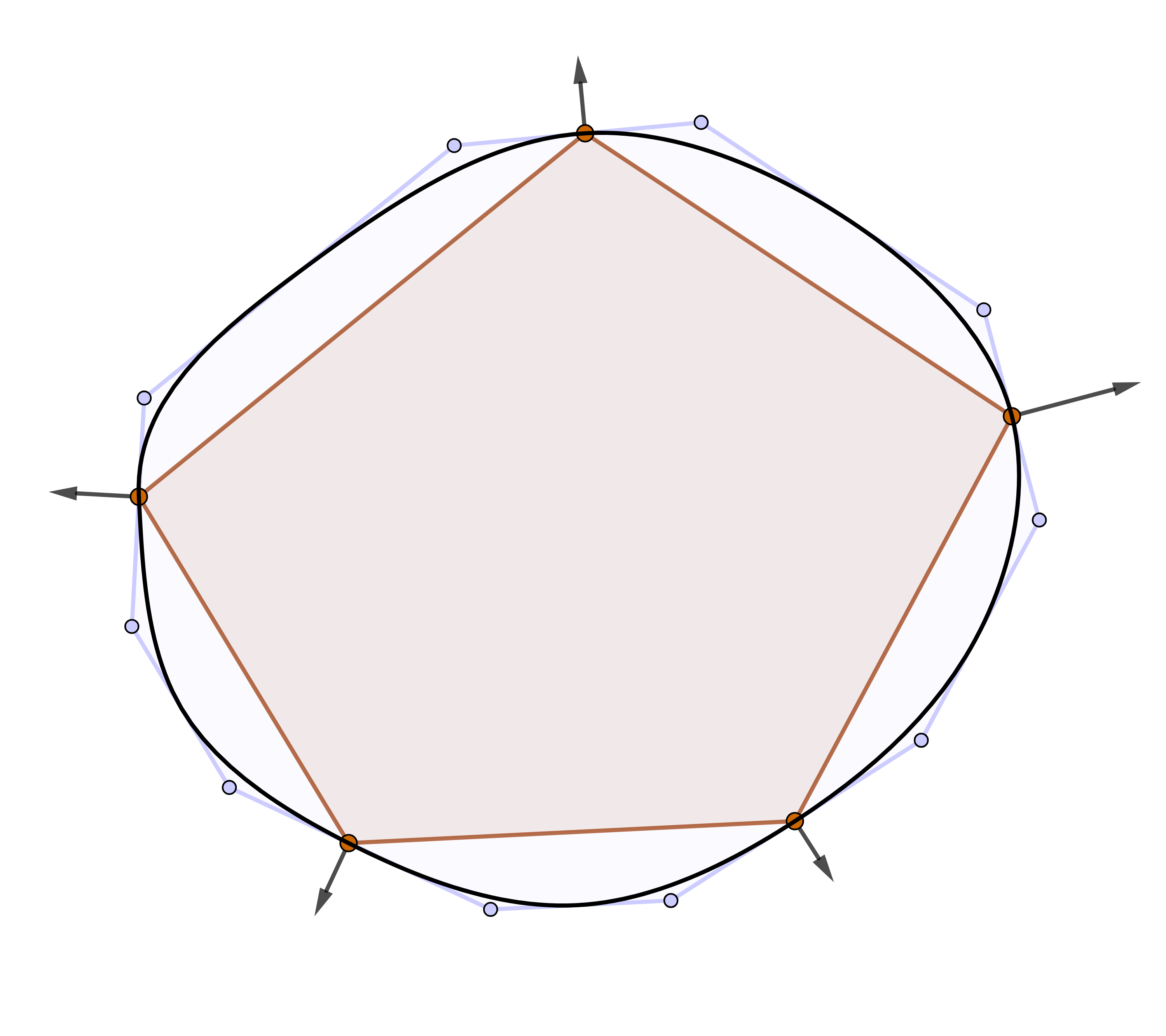}
				\caption{Arrows designate  the prescribed normals. We construct a strictly convex set with smooth boundary entirely contained in the auxiliary (blue) polygon. This set interpolates the normals and the distance to the original (red) polygon can be chosen arbitrarily small. The degree of smoothness of the boundary can be chosen arbitrarily high.}
				\label{fig:illustrPolySmooth}
\end{figure}
\begin{proof} Let us begin with preliminary remarks. 
				Consider for any $a,b $ in $ \RR$, the function 
				\begin{align*}
								f \colon t \mapsto a(t-b)^2.
				\end{align*}
				For any $t $ in $ \RR$, $q $ in $ \NN$, $q > 2$, and any $c > 0$, we have
				\begin{align}
								f(t + c) - f(t) &= \Delta^1_c f(t) = a c ( 2(t-b) + c)\nonumber\\
								f(t + 2 c) - 2 f(t+c) + f(t) &= \Delta^2_c f(t) = a c ( 2 (t + c - b) + c - 2(t-b) - c) = 2ac^2\nonumber\\
								\Delta_c^q f(t) &= 0
								\label{eq:approxSegment00}
				\end{align}
				
				\noindent
				{\em Choosing the degree $d$ and constructing the polynomial. }
				For any $d $ in $ \NN$, $d \geq 2m + 1$, we set
				\begin{align}
								a_-(d) &= \frac{-dr_-}{2(d - 1)}<0&b_-(d) &= \frac{1}{2d} \left( 1 + 2 t_-\frac{d-1}{r_-} \right)> 0\nonumber\\
								a_+(d) &= \frac{-dr_+}{2(d - 1)}<0&b_+(d) &= 1 + \frac{1}{2d} \left( -1 + 2t_+\frac{d-1}{r_+} \right)< 1,
								\label{eq:approxSegment0}
				\end{align}
			 	and define the functions
				\begin{align*}
								f_d \colon s &\mapsto
								\begin{cases}
												a_-(d) (( s - b_-(d))^2 - b_-(d)^2) & \text{ if } s\leq b_-(d)\\
												- a_-(d) b_-(d)^2& \text{ if } s\geq b_-(d)
								\end{cases}\\
								g_d \colon t &\mapsto
								\begin{cases}
												a_+(d) (( s - b_+(d))^2 - (1-b_+(d))^2) & \text{ if } s\geq b_+(d)\\
												- a_+(d) (1-b_+(d))^2& \text{ if } s\leq b_+(d).
								\end{cases}
				\end{align*}
				Furthermore, we set
				\begin{align*}
								f \colon t &\mapsto
								\begin{cases}
												\frac{r_-}{2} \left( \left( \frac{t_-}{r_-}  \right)^2- \left(s - \frac{t_-}{r_-}  \right)^2 \right)  & \text{ if } s\leq \frac{t_-}{r_-}\\
												\frac{r_-}{2} \left( \frac{t_-}{r_-}  \right)^2 & \text{ if } s\geq \frac{t_-}{r_-}
								\end{cases}\\
								g \colon t &\mapsto
								\begin{cases}
												\frac{r_+}{2} \left(\left( \frac{t_+}{r_+}  \right)^2 -  \left(s -1 - \frac{t_+}{r_+}  \right)^2 \right)  & \text{ if } s\geq 1 + \frac{t_+}{r_+}\\
												\frac{r_+}{2} \left( \frac{t_+}{r_+}  \right)^2 & \text{ if } s\leq 1 + \frac{t_+}{r_+}.
								\end{cases}
				\end{align*}
				Note that $b_-(d) \to t_- / r_-$, $b_+(d) \to 1 + t_+ / r_+$, $a_-(d) \to - r_-/2$ and $a_+(d) \to - r_+/2$ as $d \to \infty$ so that $f_d \to f$ and $g_d \to g$ uniformly on $[0,1]$. For any $d$, $f_d$ is concave increasing and $g_d$ is concave decreasing and all of them are Lipschitz continuous on $[0,1]$ with  constants that do not depend on $d$. Note also that $f(0) = 0 < g(0)$ and $g(1) = 0 < f(1)$.
				We choose $d \geq 2m + 1$ such that 
				\begin{align}
								f_d\left( \frac{m}{d} \right) &\leq \min\left( \epsilon, g_d\left( \frac{m}{d} \right) \right)\nonumber\\
								g_d\left(1 -  \frac{m}{d} \right) &\leq \min\left( \epsilon, f_d\left(1 -  \frac{m}{d} \right) \right).
								\label{eq:approxSegment1}
				\end{align}
				Such a $d$ always exists because in both cases, the left hand side converges to $0$ and the right hand side converges to a strictly positive term as $d$ tends to $\infty$.
				For such a $d$, we set $h \colon s \mapsto \min\left\{ f_d(s), g_d(s),\epsilon \right\}$. By construction, $h$ is concave,   agrees with $f_d$ on $\left[ 0,\frac{m}{d} \right] \subset [0,1/2]$ and with $g_d$ on $\left[ 1- \frac{m}{d},1 \right] \subset [1/2,1]$. Using equation \eqref{eq:approxSegment00} with $c = 1/d$, we deduce that
				\begin{align*}
								d (d-1) \Delta^2 h(0) &= d (d-1) \Delta^2 f_d(0) = d(d-1)\frac{2 a_-(d)}{d^2} = -r_-\\
								d \Delta h(0) &= d \Delta f_d(0) =  a_-(d)\left( \frac{1}{d}  -2b_-(d)\right) = t_-\\
								\Delta^{q}h(0) &= 0 = \Delta^{q}f_d(0) \quad \forall m \geq q \geq 3\\
								d (d-1) \Delta^2 h\left(1 - \frac{2}{d}\right) &= d (d-1) \Delta^2 g_d\left(1 - \frac{2}{d}\right)= d(d-1)\frac{2 a_+(d)}{d^2} = -r_+\\
								d \Delta h\left( 1 - \frac{1}{d} \right) &= d \Delta g_d\left( 1 - \frac{1}{d} \right) =  a_+(d) \left( \frac{-1}{d}  + 2 (1 - b_+(d))\right) = t_+\\
								\Delta^{q}h\left( 1 - \frac{m}{d} \right) &= \Delta^{q}g_d\left( 1 - \frac{m}{d} \right) = 0 \quad \forall m \geq q \geq 3.
				\end{align*}
				From the concavity of $h$ and the derivative formula \eqref{eq:bernsteinDerivative}, we deduce that the polynomial $B_{h,d}$ satisfies the desired properties.
\end{proof}
\begin{figure}[H]
				\centering
				\includegraphics[width=\textwidth]{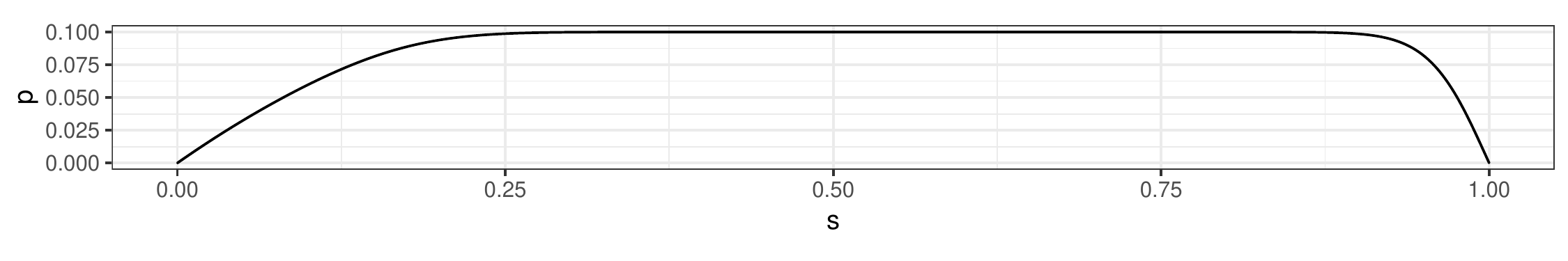}
				\caption{Illustration of the approximation result of Lemma \ref{lem:approxSegment} with $\epsilon = 0.1$, $m = 3$, $t_- =0.7$, $t_+ = -2.2$, $r_- = 2$ and $r_+ = 0.2$. The resulting polynomial is of degree 66. Numerical estimations of the first and second order derivatives at 0 and 1 match the required values up to 3 precision digits. The polynomial is strongly concave, however, this is barely visible because the strong concavity constant is extremely small. }
				\label{fig:illustrApproxSeg}
\end{figure}

We deduce the following result
\begin{lemma}
				Let $a > 0$, $r > 0$, $\epsilon > 0$, and an integer $m \geq 3$. Consider two unit vectors: $v_-$  with strictly positive entries  and $v_+$ with first entry strictly positive and second entry strictly negative. Then there exists a $C^m$ curve $\gamma \colon [0,M] \mapsto \RR^2$, such that 
				\begin{enumerate}
								\item $\|\gamma'\| = 1$.
								\item $\gamma(0) = (-a, 0) := A$ and $\gamma(1) = (0,a) : = B$.
								\item $\gamma'(0) = v_-$ and $\gamma'(1) = v_+$.
								\item $\|\gamma''(0)\| = \|\gamma''(-1)\| = r$.
								\item $\mathrm{det}(\gamma',\gamma'') < 0$ along the curve.
								\item $\gamma^{(q)}(0) = \gamma^{(q)}(1) = 0$ for any $3 \leq q \leq m$.
								\item $\dist(\gamma([0,M]), [A,B]) \leq \epsilon$.
				\end{enumerate}
				\label{lem:existenceCurve}
\end{lemma}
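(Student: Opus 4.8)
The idea is to realize the curve $\gamma$ as a graph over the segment $[A,B]$ in rotated coordinates, and then reparametrize by arc length. First I would set up the geometry: let $u = (B-A)/\|B-A\|$ be the unit direction of the segment and $w$ the unit normal obtained by rotating $u$ by $+\pi/2$, so that $(u,w)$ is a positively oriented orthonormal frame. Writing points as $A + su + \varphi(s) w$ for $s \in [0,L]$ with $L = \|B-A\| = a\sqrt2$, a curve of this form automatically satisfies conditions 2 and 7 (the endpoint conditions and the vanishing higher derivatives) provided $\varphi(0)=\varphi(L)=0$ and $\varphi^{(q)}(0) = \varphi^{(q)}(L) = 0$ for $3 \le q \le m$, since arc-length reparametrization is a smooth change of variable that does not destroy these corner conditions.

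The second step is to translate the prescribed tangent and curvature data into endpoint data for $\varphi$. Since $v_-$ has strictly positive entries it is not parallel to $u$ but makes an acute-ish angle with it; decomposing $v_- = \cos\alpha_-\, u + \sin\alpha_-\, w$ fixes the required slope $\varphi'(0) = \tan\alpha_-$ (one checks $\sin\alpha_- > 0$ from the sign conditions, so the graph starts by moving to the correct side), and similarly $v_+ = \cos\alpha_+\, u + \sin\alpha_+\, w$ gives $\varphi'(L) = \tan\alpha_+$ with $\sin\alpha_+ < 0$. For a graph, the signed curvature is $\varphi''/(1+\varphi'^2)^{3/2}$ and the magnitude of the second derivative of the arc-length parametrization equals $|\kappa|$; so condition 4, $\|\gamma''(0)\| = \|\gamma''(L)\| = r$, translates into $|\varphi''(0)| = r(1+\tan^2\alpha_-)^{3/2}$ and likewise at $L$. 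To get condition 5, $\det(\gamma',\gamma'')<0$ — i.e. the curve is concave, turning clockwise, consistent with bounding a convex region on the correct side — I need $\varphi'' < 0$ throughout, so I prescribe $\varphi''(0) = -r(1+\tan^2\alpha_-)^{3/2} =: -r_-$ and $\varphi''(L) = -r(1+\tan^2\alpha_+)^{3/2} =: -r_+$.

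Now the construction of $\varphi$ is exactly what Lemma~\ref{lem:approxSegment} provides, after the affine rescaling $s = Lt$: that lemma yields a strictly concave polynomial $p\colon[0,1]\mapsto[0,\epsilon']$ with $p(0)=p(1)=0$, prescribed first derivatives $p'(0) = t_- := L\tan\alpha_-$, $p'(1) = t_+ := L\tan\alpha_+$ (note $t_- > 0$, $t_+ < 0$ as required by the hypotheses of that lemma), prescribed second derivatives $p''(0) = -L^2 r_-$, $p''(1) = -L^2 r_+$, and $p^{(q)}(0) = 0$ for $3 \le q \le m$. Setting $\varphi(s) = p(s/L)$ transfers all these to $\varphi$, strict concavity of $p$ gives $\varphi'' < 0$ on $[0,L]$ hence condition 5, and because $\varphi$ takes values in $[0,\epsilon']$ the graph stays within Hausdorff distance $\epsilon'$ of $[A,B]$; choosing $\epsilon' = \epsilon$ gives condition 7. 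Defining $\tilde\gamma(s) = A + su + \varphi(s)w$ and then $\gamma = \tilde\gamma \circ \rho$ where $\rho$ is the arc-length parametrization of $\tilde\gamma$ (well-defined and $C^m$ since $\tilde\gamma$ is a $C^m$ immersion), we obtain $\|\gamma'\| = 1$ (condition 1), unit speed preserves $\gamma(0) = A$, $\gamma(M) = B$ with $M = \int_0^L\sqrt{1+\varphi'(s)^2}\,ds$ the total length (condition 2, up to the harmless typo $\gamma(1)$ vs $\gamma(M)$ in the statement), $\gamma'(0)$ and $\gamma'(M)$ are the unit tangents, which by our choice of slopes are $v_-$ and $v_+$ (condition 3), $\|\gamma''\|$ at the endpoints equals the curvature magnitude $r$ by the computation above (condition 4), and the clockwise turning and vanishing higher derivatives persist under the $C^m$ reparametrization (conditions 5 and 6).

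\textbf{Main obstacle.} The routine calculations are all elementary; the one point requiring genuine care is bookkeeping the reparametrization, specifically checking that arc-length reparametrization preserves the vanishing of the derivatives $\gamma^{(q)}$ at the endpoints for $3 \le q \le m$. This is where the chain-rule and Fa\`a di Bruno expansions must be organized: since $\varphi^{(q)}(0) = 0$ for $3 \le q \le m$ (and the relevant analogue at $L$ from the lemma, which only asserts it at $0$ — so one should either apply the lemma on the reversed interval or note the construction is symmetric), all the $q$-th derivatives of $\tilde\gamma$ for $q$ in that range are themselves parallel to a fixed vector at the endpoints, and composing with the arc-length map $\rho$ — whose higher derivatives at the endpoints are determined by those of $\varphi$ — one verifies inductively that no nonzero contributions to $\gamma^{(q)}$ survive beyond order $2$. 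The other mild subtlety is confirming the sign and range hypotheses of Lemma~\ref{lem:approxSegment} are met: $t_- > 0$ and $t_+ < 0$ follow from the entrywise sign assumptions on $v_-, v_+$ after verifying that $u$ and $w$ have the expected signs for this particular $A = (-a,0)$, $B = (0,a)$ (indeed $u = (1,1)/\sqrt2$, $w = (-1,1)/\sqrt2$), and $r_-, r_+ > 0$ is immediate.
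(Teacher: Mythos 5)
Your construction is essentially the paper's own proof: build the curve as the graph of the concave polynomial of Lemma \ref{lem:approxSegment}, with slopes and second derivatives chosen so that the endpoint tangents and curvature magnitude come out right, then reparametrize by arc length (the paper passes through $[0,1]$ and an affine rescaling, you scale the data directly; this is immaterial). Two points in your write-up, however, do not hold as stated.

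First, the sign verification you postpone to the end fails under the literal reading $B=(0,a)$. With $u=(1,1)/\sqrt2$ and $w=(-1,1)/\sqrt2$, the hypothesis that $v_-$ has strictly positive entries controls $\langle v_-,u\rangle$ but not the sign of $\langle v_-,w\rangle$: for $v_-$ close to the positive horizontal axis you get $t_-=\tan\alpha_-<0$, so Lemma \ref{lem:approxSegment} does not apply; likewise $v_+$ with small first entry gives $\langle v_+,u\rangle<0$, so the curve cannot even be a monotone graph over $[A,B]$. In fact, with $B=(0,a)$ the statement is false: a curve with $\det(\gamma',\gamma'')<0$ has strictly decreasing tangent angle, and if $v_-$ makes an angle smaller than $\pi/4$ with the horizontal, the displacement $\int\gamma'$ can never point in the direction of $B-A$. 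The endpoint is a typo: the intended segment is the horizontal one, $B=(a,0)$, as both the paper's proof (the affine map $x\mapsto 2a(x-1/2)$, $y\mapsto 2ay$ applied to a graph over $[0,1]$) and the invocation in Lemma \ref{lem:polyGon} (where the chord is normalized to $[-(a,0),(a,0)]$) make clear. With that reading your frame $(u,w)$ is the canonical basis, $t_\pm=v_\pm[2]/v_\pm[1]$ have the right signs immediately, and your argument coincides with the paper's; you should have corrected the typo rather than built on it.

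Second, the step you identify as the main obstacle, namely verifying $\gamma^{(q)}(0)=0$ for $3\le q\le m$ after arc-length reparametrization, cannot be carried out, because item 6 is incompatible with items 1 and 4: differentiating $\|\gamma'\|^2\equiv1$ twice gives $\langle\gamma'(0),\gamma'''(0)\rangle=-\|\gamma''(0)\|^2=-r^2\neq0$, so $\gamma'''(0)\neq0$ for any unit-speed curve with nonzero endpoint curvature. Your promised inductive Fa\`a di Bruno bookkeeping therefore cannot succeed; what survives the reparametrization is only that the endpoint jets of $\gamma$ are determined by the tangent, the curvature value $r$ and the graph data $\varphi',\varphi'',\varphi^{(q)}=0$ ($3\le q\le m$). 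To be fair, this is a defect of the lemma's statement itself, and the paper's one-line proof (``reparametrize by arclength \dots which satisfies the desired assumptions'') glosses over it in exactly the same way; but a proof that claims to verify item 6 as written contains a step that is provably false, so the claim needs to be reformulated (in terms of the graph parametrization, or of prescribed endpoint jets) before the rest of your argument, which is otherwise the paper's, can be considered complete.
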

\begin{proof}
				Consider the graph of a polynomial as given in  Lemma \ref{lem:approxSegment} with $t_- = v_-[2] / v_-[1]> 0$, $t_+ = v_+[2] / v_+[1]< 0$ and $r_- = \frac{r}{2a} (1 + t_-^2)^{\frac{3}{2}}$, $r_+ = \frac{r}{2a} (1 + t_+^2)^{\frac{3}{2}}$ and $\epsilon/2a$ as an appro\-xi\-ma\-tion parameter. This graph is parametrized by $t$. It is possible to reparametrize it by arclength to obtain a $C^m$ curve $\gamma_0$ whose tangents at $0$ is $T_-$,  at $1$ is $T_+$, and whose curvature at $0$ and $1$ is $- \frac{r}{2a}$. Furthermore, $\gamma_0$ has strictly negative curvature whence item 5. Consider the affine transform: $x \mapsto 2a(x - 1/2)$, $y \mapsto 2a y$. This results in a $C^m$ curve $\gamma$, parametrized by arclength which satisfies the desired assumptions. 
\end{proof}

\begin{lemma}[Normal approximations of polygons by smooth convex sets]
				Let $S=A_1...A_n$ be a convex polygon. For each $i$,  let  $V_i$ be in $ N_S(A_i)$ such that the angle between $V_i$ and each of the two neighboring faces is within $\left( \frac{\pi}{2},\pi \right)$. Then for any $\epsilon > 0$ and any $m \geq 2$, there exists a compact convex set $C \subset \RR^2$ such that
				\begin{itemize}
								\item[(i)] the boundary of $C$ is $C^m$ with non vanishing curvature,
								\item[(ii)] $S \subset C$,
								\item[(iii)] for any $i = 1,\ldots, n$, $A_i $ in $ \bd(C)$ and the normal cone to $C$ at $A_i$ is given by $V_i$,
								\item[(iv)] $\max_{y \in C} \dist(y,S) \leq \epsilon$.
				\end{itemize}
				\label{lem:polyGon}
\end{lemma}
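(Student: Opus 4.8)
The strategy is to handle each face of the polygon separately by the curve construction of Lemma~\ref{lem:existenceCurve}, then glue the pieces at the vertices $A_i$ using the prescribed normals $V_i$, and finally take the convex hull of the resulting curve. First I would fix a vertex $A_i$ and its two incident edges $[A_{i-1},A_i]$ and $[A_i,A_{i+1}]$. The hypothesis that the angle between $V_i$ and each neighboring face lies in $\left(\frac{\pi}{2},\pi\right)$ is exactly what is needed so that, after an affine change of coordinates sending $A_i$ to the origin with $V_i$ pointing in a fixed direction, the two edges play the roles of the vectors $v_-$ (entries of one sign) and $v_+$ (mixed signs) in Lemma~\ref{lem:existenceCurve}: the unit tangent along an edge makes an angle strictly between $-\pi/2$ and $\pi/2$ with the normal's orthogonal complement, which is the sign condition required there. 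Thus near each $A_i$ I can produce a short $C^m$ arclength-parametrized arc through $A_i$ with the edge directions as one-sided tangents, with a prescribed (small, to be chosen) curvature value at the junction, strictly negative signed curvature throughout, and all derivatives of order $3,\dots,m$ vanishing at the endpoints.

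The gluing is then the routine part: along each edge $[A_i,A_{i+1}]$ I concatenate the ``right half'' of the arc built at $A_i$ with the ``left half'' of the arc built at $A_{i+1}$. Because both arcs are parametrized by arclength, have matching unit tangent equal to the edge direction at the junction point (which lies on the open edge), and have all derivatives of order $2$ through $m$ equal to zero there — I should arrange the curvature value at that interior junction to be zero, or equivalently choose the ``middle'' piece along the edge to be a genuine straight segment — the concatenated curve is $C^m$. Concretely one builds, for each edge, a curve that starts tangent to the edge at $A_i$ with the right curvature, runs along (or close to) the edge as a straight segment over its middle portion, and arrives tangent to the edge at $A_{i+1}$; Lemma~\ref{lem:existenceCurve} (applied on each half, with the straight portion interpolated trivially, cf.\ Remark~\ref{rem:alignedInterpolation}) gives exactly such a piece with derivatives of order $\geq 2$ vanishing at $A_{i+1}$ and of order $\geq 3$ vanishing at $A_i$ while the second derivative there has the prescribed norm $r$. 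Running around all $n$ edges produces a closed $C^m$ curve $\Gamma$. One checks $\det(\gamma',\gamma'')<0$ along $\Gamma$ (strict on the curved near-vertex pieces, and $\gamma''=0$ on the straight pieces), so the curvature is nonnegative and vanishes only on the flat parts; a final small perturbation, or simply the observation that a curve with nonnegative curvature which is strictly positive near each vertex still bounds a convex set whose support function is smooth, yields item~(i) after passing to $C := \mathrm{conv}(\Gamma)$. In fact, to get genuinely \emph{non-vanishing} curvature as stated, I would instead let the ``middle'' pieces have a tiny strictly negative signed curvature rather than being exactly flat — this is still allowed by Lemma~\ref{lem:existenceCurve} and merely pushes $\Gamma$ slightly outside the edges, which is harmless for the $\epsilon$-estimate.

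Items (ii), (iii), (iv) then follow quickly. For (iii): by construction $A_i\in\Gamma\subset\bd C$, the unit tangent to $\Gamma$ at $A_i$ jumps from the direction of $[A_{i-1},A_i]$ to that of $[A_i,A_{i+1}]$, so $A_i$ is a smooth point of $\Gamma$ but an ``angular'' point of $C$ whose normal cone is the cone spanned by the outward normals to those two edges — and $V_i$ lies in that cone by hypothesis; refining the construction so that the curved arc at $A_i$ has its tangent line perpendicular to $V_i$ exactly at $A_i$ forces $N_C(A_i)=\RR_+ V_i$. For (ii): each edge of $S$ lies on a supporting line of $C$ (the curve stays on the outer side of each edge line because its signed curvature is of the right sign and it is tangent to the edge at both ends), hence $S=\mathrm{conv}(A_1,\dots,A_n)\subset C$. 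For (iv): the near-vertex arcs are within $\epsilon$ of the edges by item~7 of Lemma~\ref{lem:existenceCurve} (choosing the approximation parameter there below $\epsilon$), the straight/nearly-straight middle pieces are within $\epsilon$ of the edges, and taking convex hulls does not increase the one-sided Hausdorff distance $\max_{y\in C}\dist(y,S)$ since $S$ is convex; hence $\max_{y\in C}\dist(y,S)\leq\epsilon$.

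\textbf{Main obstacle.} The delicate point is the compatibility of \emph{all} the local data at each vertex simultaneously: the tangent directions are forced by the edges, but the curvature value $r$ and the length of each near-vertex arc must be chosen small enough, uniformly over the $n$ vertices, so that (a) the arcs emanating from $A_{i-1}$, $A_i$, $A_{i+1}$ do not overlap and leave room for a genuine (nearly-)straight middle segment on each edge, (b) the whole curve $\Gamma$ stays in the $\epsilon$-neighborhood of $S$, and (c) $\Gamma$ bounds a convex region — i.e.\ the signed curvature never changes sign, which is automatic from item~5 of Lemma~\ref{lem:existenceCurve} on the curved parts but requires the middle pieces to be flat or very slightly concave, never convex. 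Since there are finitely many vertices, a single small enough choice of the arc-length scale and of $r$ works for all of them, so this is a bookkeeping issue rather than a genuine difficulty; still, it is where the proof must be written carefully.
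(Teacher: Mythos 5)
There is a genuine gap, and it sits exactly at the point your plan treats as routine: where the junctions are placed and what the tangent at $A_i$ is. The paper's construction uses \emph{one} arc of Lemma~\ref{lem:existenceCurve} per edge, running from $A_i$ all the way to $A_{i+1}$, with endpoint tangents chosen \emph{orthogonal to $V_i$ and $V_{i+1}$} (this is what the angle hypothesis buys) and with the same endpoint curvature norm $r=1$ at every vertex; the gluing then happens \emph{at the vertices}, where the lemma gives full control of all derivatives up to order $m$ (matching tangents, matching second derivatives of norm $r$, vanishing higher ones). Your architecture is different: arcs near the vertices whose one-sided tangents are the \emph{edge directions}, glued mid-edge to straight (or almost straight) middle pieces. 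As written this cannot deliver the lemma. First, if the tangent of $\Gamma$ ``jumps'' at $A_i$, then $\bd C$ is not $C^m$ there and $N_C(A_i)$ is the two-dimensional cone spanned by the two edge normals, whereas (iii) demands $N_C(A_i)=\RR_+V_i$ exactly — $V_i$ merely lying in the normal cone is not what the lemma asserts, and it is the exact normal that is needed downstream (e.g.\ in Corollaries~\ref{cor:polygonDecrease}, \ref{cor:homotopy}). Second, the repaired version (tangent $\perp V_i$ at $A_i$, flat middle pieces lying on the edges) is geometrically impossible together with (i)–(iii): if a segment $[P,Q]$ of the edge lies in $\bd C$, the edge line is a supporting line of $C$, so the face $\bd C\cap L$ is convex and, since $A_i\in \bd C\cap L$, it contains the whole segment $[P,A_i]$; hence the boundary is flat (vanishing curvature) up to $A_i$ and the one-sided tangent at $A_i$ is the edge direction, contradicting nonvanishing curvature and forcing $V_i$ to be the edge normal, which the strict angle condition excludes. (The alternative reading, arcs tangent to the edges at interior points that round the corner, removes $A_i$ from $C$ and kills (ii) and (iii).)

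Your suggested fix — give the middle pieces a tiny strictly negative curvature so they sit slightly outside the edges — does move you toward a correct picture, but it is not supported by the tools you cite: Lemma~\ref{lem:existenceCurve} requires $r>0$ and strictly signed slopes ($t_->0$, $t_+<0$) at \emph{both} endpoints, so it cannot produce an arc arriving exactly tangent to the edge with zero curvature, and it controls derivatives only at the endpoints of the arc it builds, so ``halves'' of such arcs have no reason to have vanishing derivatives of order $3,\dots,m$ at an interior cut point; Remark~\ref{rem:alignedInterpolation} concerns the level-set interpolation of Lemma~\ref{lem:convexInterpol} and does not apply here. Once you prescribe matching position, unit tangent, curvature and vanishing higher derivatives at your two interior junctions, you are redoing precisely the work of Lemma~\ref{lem:existenceCurve} on the whole edge — i.e.\ you collapse onto the paper's proof: one strictly convex outward-bulging arc per edge, through $A_i$ and $A_{i+1}$, tangent there orthogonal to the prescribed normals, glued $C^m$ at the vertices because the curvature norm $r$ is chosen the same on both sides and all higher derivatives vanish. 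The containment (ii) and the $\epsilon$-bound (iv) then follow as you say (the arcs are graphs of nonnegative concave functions of height at most $\epsilon$ over the edges), but the construction itself needs to be rebuilt with the junctions at the vertices, not mid-edge.
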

\begin{proof}
				We assume without loss of generality that $A_1,\ldots,A_n$ are ordered clockwise. For $i = 1, \ldots, n-1$, and each segment $[A_i,A_{i+1}]$, we may perform a rotation and a translation  to obtain $A_i = -(a,0)$ and $A_{i+1} = (a,0)$. Working in this coordinate system, using the angle condition on $V_i$, we may choose $v^-_i$, $v^+_{i+1}$ satisfying the hypotheses of Lemma~\ref{lem:existenceCurve}  respectively orthogonal to $V_i$ and $V_{i+1}$. Choosing $r = 1$, we obtain $\gamma_i \colon [0,M_i] \mapsto \RR^2$ as given by Lemma \ref{lem:existenceCurve}. Rotation and translations affect only the direction of the derivatives of curves, not their length. Hence, it is possible to concatenate curves $\left( \gamma_i \right)_{i=1}^{n-1}$ and to preserve the $C^m$ properties of the resulting curve. At end-points, tangents and second order derivatives coincide while higher derivatives vanish. Furthermore the curvature has constant sign and does not vanish. We obtain a closed $C^m$ curve which defines a convex set which satisfies all the requirements of the lemma.
\end{proof}

\begin{remark}[Bissector]{\rm Given any polygon, choosing normal vectors as given by the direction of the bissector of each angles ensure that the above assumptions are satisfied. Hence all our approximation results hold given polygon without specifying the choice of outer normals.}
\label{rem:noNormal}
\end{remark}

\subsection{Smooth convex interpolants of polygonal sequences}
\begin{definition}[Interpolability]
				{\rm For $n\geq3$, let $A_1\ldots A_n $ be  a convex polygon $S$ and $V_i $ be in $ N_S(V_i)$ for $i=1,\ldots,n$. We say that $\left( A_i,V_i \right)_{i=1}^n $ is  {\em interpolable} if for each $i = 1,\ldots,n$,  the angle between $V_i$ and each of the two neighboring faces of the polygon is in $\left( \frac{\pi}{2},\pi \right)$. 
				The collection $\left( A_i,V_i \right)_{i=1}^n $ is called {\em a polygon-normal pair}.}
				\label{def:interpolable}
\end{definition}
Let $I = \ZZ$ or $I = \NN$. 
Let $\left( PN_i \right)_{i \in I}$ be a sequence of interpolable polygon-normal pairs. Setting for $i $ in $ I$, $PN_i = \left\{ \left( A_{j,i} \right)_{j=1}^{n_i}, \left( V_{j,i} \right)_{j=1}^{n_i} \right\}$ where $n_j $ is in $ \NN$ and denoting by $T_i$ the polygon  $A_{1,i}\ldots A_{n_i,i}$, we say that the sequence $\left( PN_i \right)_{i \in I}$ is strictly increasing if for all $i $ in $ I$, $T_i \subset \mathrm{int}(T_{i+1})$.

Let $\left( PN_i \right)_{i \in I}$ be a strictly increasing sequence of interpolable polygon-normal pairs. A sequence  $(\epsilon_i)_{i \in I}$ in $(0,1)$ is said to be {\em admissible} if $0 \in \mathrm{int}(T_i)$ for each $i $ in $ I$ and 
$$\gamma T_i\subset\inte T_{i+1}$$
for all $\gamma \in [1-\epsilon_i,1+\epsilon_i].$
We have the following corollary of Theorem \ref{th:smoothinterp}.

\begin{corollary}[Smooth convex interpolation of polygon sequences]
				Let $I = \ZZ$ or $I = \NN$. 
				Let $\left( PN_i \right)_{i \in I}$ be a strictly increasing sequence of interpolable polygon-normal pairs and $(\epsilon_i)_{i \in I}$ be admissible. Set $\displaystyle\mathcal{T}:=\mathrm{int}\left(\cup_{i\in I}T_i  \right).$
		
				Then for any $k $ in $ \NN$, $k \geq 2$ there exists a $C^k$ convex function $f \colon \mathcal{T} \mapsto \RR$, and an increasing sequence $(\lambda_i)_{i \in I}$, with $\inf_{i \in I} \lambda_i > -\infty$, such that for each $i $ in $ I$
				\begin{itemize}
								\item[(i)] $T_i \subset \left\{ x,\,f(x) \leq \lambda_i \right\}$.
								\item[(ii)] $\dist(T_i, \left\{ x,\,f(x) \leq \lambda_i \right\}) \leq \epsilon_i$.
								\item[(iii)] For each $i $ in $ I$, $j$ in $\{1,\ldots,n_i\}$, we have  $f(A_{i,j}) = \lambda_i$ and $\nabla f(x)$ is colinear to $V_{i,j}$.
								\item[(iv)] $\nabla^2 f$ is positive definite outside $\argmin f$. When there is a unique minimizer then $\nabla^2f$ is positive definite  throughout $\T$  {\rm (}this is the case when $I = \NN$ or when $I=\ZZ$ and $\cap_{i \in I} T_i$ is a singleton{\rm )}.
				\end{itemize}
				\label{cor:polygonDecrease}
\end{corollary}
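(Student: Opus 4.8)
The plan is to deduce the corollary from Theorem~\ref{th:smoothinterp} by replacing each polygon $T_i$ with a positively curved $C^k$ convex body $C_i$ that still passes through the vertices $A_{i,j}$ with the prescribed normals $V_{i,j}$, and that lies close enough to $T_i$ for the whole sequence $(C_i)_{i\in I}$ to remain strictly nested. Once the $C_i$ are produced, $f$ is obtained by applying Theorem~\ref{th:smoothinterp} to $(C_i)_{i\in I}$, and the four conclusions are read off from the geometry of the $C_i$ together with the Hessian statement of that theorem.

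Fix $k\ge 2$. For each $i\in I$, the pair $PN_i$ is interpolable, which is exactly the angle hypothesis needed to invoke Lemma~\ref{lem:polyGon} (with $m=k$) on $T_i$: for a parameter $\eta_i>0$ it yields a compact convex set $C_i\subset\RR^2$ with $C^k$ boundary of non-vanishing --- hence positive --- curvature, with $T_i\subset C_i$, with $A_{i,j}\in\bd(C_i)$ and $N_{C_i}(A_{i,j})=\RR_+V_{i,j}$ for every $j$, and with $\dist(T_i,C_i)=\max_{y\in C_i}\dist(y,T_i)\le\eta_i$. To tune $\eta_i$, use admissibility: $0\in\inte(T_i)$ and $(1+\epsilon_i)T_i\subset\inte(T_{i+1})$. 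A convex-combination argument (combining any $x\in T_i$ with $0\in\inte(T_i)$) shows $T_i\subset(1+\epsilon_i)\inte(T_i)$, an open set containing the compact set $T_i$, so there is $\eta_i\in(0,\epsilon_i]$ such that any compact convex set lying within Hausdorff distance $\eta_i$ of $T_i$ and containing $T_i$ is contained in $(1+\epsilon_i)T_i$. Running Lemma~\ref{lem:polyGon} with this $\eta_i$ gives $C_i\subset(1+\epsilon_i)T_i\subset\inte(T_{i+1})\subset\inte(C_{i+1})$, so $(C_i)_{i\in I}$ is strictly increasing and $\dist(T_i,C_i)\le\epsilon_i$. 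Since moreover $0\in\inte(T_i)\subset\inte(C_i)$ and each $\bd(C_i)$ is $C^k$ with positive curvature, every consecutive couple $(C_i,C_{i+1})$ satisfies Assumption~\ref{ass:curvature}; and $T_i\subset C_i\subset T_{i+1}$ forces $\bigcup_{i\in I}C_i=\bigcup_{i\in I}T_i$, hence $\inte(\bigcup_{i\in I}C_i)=\mathcal{T}$.

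Now apply Theorem~\ref{th:smoothinterp} to $(C_i)_{i\in I}$. It provides a $C^k$ convex function $f\colon\mathcal{T}\to\RR$ and, from its construction, an increasing sequence $(\lambda_i)_{i\in I}$ with $\inf_i\lambda_i>-\infty$ such that $C_i=\{x:f(x)\le\lambda_i\}$, $\argmin f=\bigcap_{i\in I}C_i$ (a single point if $I=\NN$), and $\nabla^2f$ is positive definite on $\mathcal{T}\setminus\argmin f$, hence throughout $\mathcal{T}$ when the minimizer is unique --- directly for $I=\NN$, and for $I=\ZZ$ with $\bigcap_i T_i$ a singleton by arranging (as in the $I=\NN$ case of Theorem~\ref{th:smoothinterp}) the innermost interpolation around the minimizer to be radial. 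Then (i) is $T_i\subset C_i=\{f\le\lambda_i\}$; (ii) is $\dist(T_i,\{f\le\lambda_i\})=\dist(T_i,C_i)\le\epsilon_i$; and (iv) is the Hessian assertion just recalled. For (iii): since $\bigcap_{i\in I}C_i\subset\inte(C_i)$ (using $C_{i-1}\subset\inte(C_i)$ if $I=\ZZ$), the point $A_{i,j}\in\bd(C_i)$ lies outside $\argmin f$, so $\nabla f(A_{i,j})\ne0$ and $f(A_{i,j})=\lambda_i$; as $f$ is convex and $A_{i,j}$ is on the boundary of the sublevel set $C_i$, the vector $\nabla f(A_{i,j})$ lies in $N_{C_i}(A_{i,j})=\RR_+V_{i,j}$, hence is a positive multiple of $V_{i,j}$.

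The main obstacle is the tuning step: one must convert admissibility --- scalings of $T_i$ with factor near $1$ stay inside $T_{i+1}$, and $0$ is interior to $T_i$ --- into a genuine positive margin $\eta_i$ that simultaneously absorbs the smoothing error coming from Lemma~\ref{lem:polyGon}, so that strict nestedness of $(C_i)$ and the sharp distance bound (ii) hold at once. Everything else --- $C^k$ regularity, the level-set structure $C_i=\{f\le\lambda_i\}$, boundedness from below of $(\lambda_i)$, and positive definiteness of the Hessian --- is inherited verbatim from Theorem~\ref{th:smoothinterp}, while the interpolation of the vertices and the matching of normals is exactly the content of Lemma~\ref{lem:polyGon}.
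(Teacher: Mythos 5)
Your construction --- smoothing each polygon with Lemma~\ref{lem:polyGon} at a tolerance $\eta_i\le\epsilon_i$ chosen, via admissibility and compactness, so that $C_i\subset(1+\epsilon_i)T_i\subset\inte T_{i+1}\subset\inte C_{i+1}$, then feeding the resulting positively curved $C^k$ sequence into Theorem~\ref{th:smoothinterp} and reading off (i), (ii), (iii) and the ``positive definite outside $\argmin f$'' part of (iv) --- is exactly the route the paper intends (the corollary is stated there without proof), and those steps are correct: the tuning of $\eta_i$, the verification of Assumption~\ref{ass:curvature} for consecutive pairs, the identification of $\T$, the extraction of the increasing bounded-below $(\lambda_i)$ from the value assignation in the theorem's proof, and the normal-cone argument giving $\nabla f(A_{i,j})\in N_{C_i}(A_{i,j})=\RR_+V_{i,j}$ all hold.

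The one genuine gap is the final clause of (iv) in the case $I=\ZZ$ with $\bigcap_{i}T_i$ a singleton. Your proposed fix --- ``arranging, as in the $I=\NN$ case, the innermost interpolation around the minimizer to be radial'' --- is not available: when $I=\ZZ$ the prescribed sublevel sets $C_i$, $i\to-\infty$, accumulate at the minimizer, so there is no innermost ring to replace by a radial patch, and the construction of Theorem~\ref{th:smoothinterp} for $I=\ZZ$ explicitly makes all derivatives of $f$ vanish near the solution set via Lemma~\ref{lem:CkSmoothing}; composing with $g(t)=\sqrt{t^2+1}+t$ then leaves $\nabla^2 f$ equal to $0$ at the unique minimizer, not positive definite. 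More fundamentally, a $C^2$ function with positive definite Hessian at its unique minimizer has rescaled sublevel sets converging in shape to a fixed ellipse, whereas the polygons $T_i$ (whose vertices and normals must be interpolated) may oscillate in shape as they shrink, so this clause cannot be obtained from Theorem~\ref{th:smoothinterp} as stated, nor by your remark; at best one gets strict convexity (Remark~\ref{rem:strictConvexity}) and positive definiteness outside the singleton, which is what the paper's subsequent counterexamples actually use (compare Corollary~\ref{cor:Thom}, which claims positive definiteness only away from $0$). You should either weaken (iv) accordingly for $I=\ZZ$, or add a hypothesis on the asymptotic shape of the $T_i$; everything else in your proposal matches the intended argument.
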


We add two remarks which will be useful for directional convergence issues and the construction of Legendre functions:
\begin{itemize}
				\item[(a)] If two consecutive elements of the sequence of interpolable polygon-normal pairs are homothetic with center $0$ in the interior of both polytopes, then the restriction of the resulting convex function to this convex ring can be constructed such that all the sublevel sets within this ring are homothetic with the same center. 
				\item[(b)] If further conditions are imposed on the elements of a strictly increasing interpolable polygon-normal pair, then the resulting function can be constructed to be Legendre and globally Lipschitz continuous (that is, its Legendre conjugate has bounded support). This is a consequence of Proposition \ref{th:globallyLipshitz} and will be detailled in the next section. 
\end{itemize}

\subsection{More on Legendre functions and a pathological function with polyhedral domain}

Using intensively polygonal interpolation, we build below a finite continuous Legendre function $h$ on an $\ell^\infty$ square with oscillating ``mirror lines": $t\to \nabla h^*(\nabla h(x_0)+tc)$.

We start with the following preparation proposition related to the Legendre interpolation of Proposition~\ref{th:globallyLipshitz}.
\begin{lemma}
				Let $\left( PN_i \right)_{i \in \NN^*}$ be a strictly increasing sequence of interpolable polygon-normal pairs. 
				Setting for $i$ in  $\NN^*$, $PN_i = \left\{ \left( A_{j,i} \right)_{j=1}^{n_i}, \left( V_{j,i} \right)_{j=1}^{n_i} \right\}$ where $n_j$ is in $\NN^*$ and denoting by $T_i$ the polygon $A_{1,i}\ldots A_{n_i,i}$, we assume that $$T_i=3i P,\,\forall i \in \NN^*,$$ where $P$ is a fixed polygon which contains the unit Euclidean disk.\\ Then for any $l$ in $\NN$, $l \geq 2$, there exists a strictly increasing sequence of sets $\left( S_i \right)_{i \in \NN, \,i\geq 2}$, such that for $j \geq 1$, 
				\begin{itemize}
								\item $S_{3j}$ interpolates the normals of $PN_j$ in the sense of Lemma \ref{lem:polyGon} with $\mathrm{dist}(S_{3j}, T_j) \leq 1 / (4(3j+2)^l)$
								\item $S_{3j - 1} = \frac{3j - 1}{3_j} S_{3j}$
								\item $S_{3j + 1} = \frac{3j + 1}{3_j} S_{3j}$
				\end{itemize}
				This sequence has the following properties\begin{itemize} \item there exists $c>0$, such that for all $j $ in $\NN$, $j \geq 3$ and for all unit vector $x$, 
				\begin{align}
				    c\geq\sigma_{S_{j+1}}(x) - \sigma_{S_j}(x) \geq 1 - \frac{1}{(j+1)^l}.
				    \label{eq:interpolGauge1}
				\end{align}
				 \item  for all unit vector $x$,
				\begin{align}
								\left| \frac{\sigma_{S_{j+1}}(x)  - \sigma_{S_j}(x)}{ \sigma_{S_{j}}(x)  - \sigma_{S_{j-1}}(x)} -1\right| \leq \frac{1}{j^l},\:\forall j\geq 3.
								\label{eq:interpolGauge2}
				\end{align}
\end{itemize}
				\label{lem:interpolGauge}
\end{lemma}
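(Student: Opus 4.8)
The plan is to exhibit the sequence $(S_i)$ explicitly from the given polygonal data $(PN_i)$ and then to verify the two displayed estimates \eqref{eq:interpolGauge1} and \eqref{eq:interpolGauge2} by comparing the support functions of the $S_i$ with those of the scaled polygons $T_j = 3jP$, whose support functions are simply $\sigma_{T_j} = 3j\,\sigma_P$. First I would, for each $j \geq 1$, invoke Lemma~\ref{lem:polyGon} (with smoothness order $m = l$ and approximation parameter $\epsilon_j := 1/(4(3j+2)^l)$, using the bisector normals of $PN_j$ via Remark~\ref{rem:noNormal}, which are interpolable by hypothesis) to obtain a compact convex set $C_j$ with $C^l$ boundary, non-vanishing curvature, containing $T_j$, interpolating the prescribed vertices and normals, and with $\dist(C_j, T_j) \le \epsilon_j$. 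Set $S_{3j} := C_j$, $S_{3j-1} := \frac{3j-1}{3j} S_{3j}$ and $S_{3j+1} := \frac{3j+1}{3j} S_{3j}$. One must first check this is a legitimate strictly increasing sequence of sets satisfying Assumption~\ref{ass:curvature} for consecutive pairs: positive curvature and $C^l$ boundary are inherited from $C_j$ and preserved under positive scaling; $0 \in \inte S_i$ follows since $P \supset$ unit disk forces $0\in\inte T_j\subset\inte C_j$; and the strict nesting $S_{3j-1}\subset\inte S_{3j}\subset\inte S_{3j+1}$ is immediate, while $S_{3j+1}\subset\inte S_{3j+2}=S_{3(j+1)-1}$ needs the Hausdorff bound — since $S_{3j+1}$ is within a controlled distance of $(3j+1)P$ and $S_{3(j+1)-1}$ within a controlled distance of $(3j+2)P$, and $(3j+1)P\subset\inte (3j+2)P$ with a margin of order one while the errors are $O(1/j^l)$ with $l\ge 2$, the inclusion holds for all $j$ (adjusting the first few terms or rescaling $P$ if needed).

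Next I would estimate $\sigma_{S_i}$. For each unit vector $x$, $\sigma_{T_j}(x) = 3j\,\sigma_P(x)$, and since $T_j \subset S_{3j}$ with $\dist(S_{3j},T_j) \le \epsilon_j$ one gets $0 \le \sigma_{S_{3j}}(x) - 3j\,\sigma_P(x) \le \epsilon_j = O(1/j^l)$. For the scaled copies, $\sigma_{S_{3j\pm1}}(x) = \frac{3j\pm1}{3j}\sigma_{S_{3j}}(x)$, so $\sigma_{S_{3j\pm1}}(x) = (3j\pm1)\sigma_P(x) + O(1/j^l)$ (using that $\sigma_{S_{3j}}(x) = O(j)$ while the multiplicative error $\pm 1/(3j)$ applied to the $O(1/j^l)$ part is negligible). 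Thus uniformly over consecutive indices, $\sigma_{S_{j+1}}(x) - \sigma_{S_j}(x) = \sigma_P(x) + O(1/j^l)$. Writing $\mu := \min_{\|x\|=1}\sigma_P(x) > 0$ (positive since $P$ contains the unit disk, so $\sigma_P \ge 1$ — in fact $\mu\ge1$) and $\bar\mu := \max_{\|x\|=1}\sigma_P(x) < \infty$, this gives $\sigma_{S_{j+1}}(x) - \sigma_{S_j}(x) \in [\mu - O(1/j^l), \bar\mu + O(1/j^l)]$; taking $c := \bar\mu + 1$ and absorbing the lower-order correction (possibly at the cost of replacing $l$ by $l$ with a slightly worse constant, or noting $\mu\ge1$ so the lower bound $1 - 1/(j+1)^l$ follows once the implied constant is tracked — one may need $\sigma_P\equiv$ something $\ge 1$; since $P\supset$ unit disk we indeed have $\sigma_P(x)\ge 1$ for all unit $x$, so the lower bound $1-1/(j+1)^l$ is attainable after choosing $\epsilon_j$ small enough, which is exactly why $\epsilon_j = 1/(4(3j+2)^l)$ was prescribed) we obtain \eqref{eq:interpolGauge1} for $j \ge 3$.

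For \eqref{eq:interpolGauge2}, I would take the ratio: since numerator and denominator are both $\sigma_P(x) + O(1/j^l)$ with $\sigma_P(x) \ge 1$ bounded away from zero, the ratio is $1 + O(1/j^l)$, and tracking the constants — the numerator error is at most $2\epsilon_{j+1} + \epsilon_j \le 1/(3j)^l$-ish and the denominator is $\ge \sigma_P(x) - \text{(small)} \ge 1/2$ say — yields $\bigl|\tfrac{\sigma_{S_{j+1}}(x)-\sigma_{S_j}(x)}{\sigma_{S_j}(x)-\sigma_{S_{j-1}}(x)} - 1\bigr| \le 1/j^l$ for $j \ge 3$, which is the claim. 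The main obstacle I anticipate is purely bookkeeping: making the constants work so that the clean bounds $1 - 1/(j+1)^l$ and $1/j^l$ (with coefficient exactly $1$) come out, rather than $C/j^l$ for some unspecified $C$. This is handled by the deliberate choice $\epsilon_j = 1/(4(3j+2)^l)$ together with the normalization $T_i = 3iP$ and $P \supset$ unit disk (so $\sigma_P \ge 1$), which buys enough room; the factor $3$ in the indexing and the $\tfrac14$ and the shift by $2$ in $(3j+2)$ are exactly the slack needed to absorb the scaling errors from passing to $S_{3j\pm1}$ and the cross terms, and to ensure strict nesting of the intermediate scaled sets. No genuinely new idea is required beyond the approximation Lemma~\ref{lem:polyGon} and elementary support-function arithmetic.
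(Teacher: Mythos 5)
Your construction is the same as the paper's: take $S_{3j}$ to be the Lemma~\ref{lem:polyGon} interpolant of $T_j = 3jP$ with tolerance $\delta_j = 1/(4(3j+2)^l)$, set $S_{3j\pm 1}$ to be the prescribed dilations, use $P \supset B(0,1)$ to get the two-sided sandwich $3jP \subset S_{3j} \subset (3j+\delta_j)P$, and then estimate consecutive differences of support functions and their ratios. The one place where you stop short of a proof is the final bookkeeping: the paper splits into the three residue classes $j \bmod 3$ (the ratio is exactly $1$ in one case and requires the elementary bound $(1+t)/(1-t) \le 1 + 2t + 12t^2$ for $t \le 1/2$ in another) to get the clean constants $1/(j+1)^l$ and $1/j^l$, whereas you assert these come out "after tracking constants" without actually carrying it through; since the whole content of the lemma is that the exact normalization $\delta_j = 1/(4(3j+2)^l)$ makes those constants work, that arithmetic should be done rather than waved at.
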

\begin{proof}
				Set for all $j$  in $\NN^*$, $\delta_j = \frac{1}{4(3j+2)^l}$ and let $S_{3j}$ be the $\delta_j$ interpolant of $T_j=3jP$ as given by Lemma \ref{lem:polyGon} so that $\dist(S_{3j}, 3jP) \leq \delta_j$.  Since $P$ contains the unit  ball, \begin{equation}\label{inc} 3j P \subset S_{3j} \subset (3j + \delta_j) P.\end{equation} Now set
				\begin{align*}
								S_{3j - 1} &= \frac{3j - 1}{3_j} S_{3j}\\
								S_{3j + 1} &= \frac{3j + 1}{3_j} S_{3j}.
				\end{align*}
				For any $j$ in $\NN^*$, it is clear that $S_{3j-1} \subset \mathrm{int}(S_{3j})$ and $S_{3j} \subset \mathrm{int}(S_{3j+1})$. Furthermore, by \eqref{inc},  we have 
				\begin{align*}
								S_{3j+1} \subset \frac{3j+1}{3j} ( 3j + \delta_j) P \subset ((3j+1) + 2\delta_j) P \subset \mathrm{int}((3j+2) P) \subset \mathrm{int} (S_{3j+2})
				\end{align*}
				so that we indeed have a strictly increasing sequence of sets. We obtain from the construction, for any $j$ in  $\NN^*$, and any  unit vector $x$,
				\begin{align}
								\sigma_{S_{3j}}(x) - \sigma_{S_{3j-1}}(x) &=\sigma_{S_{3j+1}}(x) - \sigma_{S_{3j}}(x)\notag \\
								&= \frac{1}{3j} \sigma_{S_{3j}}(x) \in \left[ \sigma_P(x), \left( 1 + \frac{\delta_j}{3j} \right) \sigma_P(x) \right] \subset \left[ 1, \left(1+\frac{\delta_j}     {3j} \right) \sigma_P(x) \right]\notag \\
								\sigma_{S_{3j+2}}(x) - \sigma_{S_{3j+1}}(x) &\leq \sigma_P(x) (3j +2) \left( 1 + \frac{\delta_{j+1}}{3j+3} \right) - \sigma_P(x)(3j+1)\notag\\
								\sigma_{S_{3j+2}}(x) - \sigma_{S_{3j+1}}(x) &\geq \sigma_P(x) (3j +2) - \sigma_P(x)(3j+1)\left( 1 + \frac{\delta_j}{3j} \right) \notag\\
								& = \sigma_P(x) \left( 1 - \delta_j \frac{3j+1}{3j} \right)\notag\\ 
								&\geq 1 -  \delta_j \frac{4}{3}  \geq 1 - \frac{1}{(3j+2)^l}\label{intermed}
				\end{align}
				This proves \eqref{eq:interpolGauge1}. 
				We deduce that for all $j$  in $\NN^*,$
				\begin{align*}
								\frac{\sigma_{S_{3j+1}}(x)  - \sigma_{S_{3j}}(x)}{ \sigma_{S_{3j}}(x)  - \sigma_{S_{3j-1}}(x)}& = 1, \mbox{ for all nonzero vector $x$,} \\
								\max_{\|x\| = 1} \frac{\sigma_{S_{3j+2}}(x)  - \sigma_{S_{3j+1}}(x)}{ \sigma_{S_{3j+1}}(x)  - \sigma_{S_{3j}}(x)} &\leq (3j +2) \left( 1 + \frac{\delta_{j+1}}{3j+3} \right) - (3j+1)\\
								&= 1 + \frac{3j+2}{3j+3} \delta_{j+1} \leq 1 + \frac{1}{(3j +1)^l},\\
								\min_{\|x\| = 1} \frac{\sigma_{S_{3j+2}}(x)  - \sigma_{S_{3j+1}}(x)}{ \sigma_{S_{3j+1}}(x)  - \sigma_{S_{3j}}(x)} &\underset{\eqref{intermed}}{\geq} \frac{1 - \delta_j \frac{3j+1}{3j} }{\left( 1 + \frac{\delta_j}{3j} \right) }\\
								&\geq \left( 1 - \delta_j \frac{3j+1}{3j}  \right) \left( 1 - \frac{\delta_j}{3j} \right) \\ 
								& \geq 1 - \delta_j\left( \frac{3j+1}{3j} + \frac{1}{3j} \right) \geq 1 - \delta_j \frac{5}{3} \geq 1 - \frac{1}{(3j+1)^l}.
				\end{align*}
				Furthermore, using the fact that $t \mapsto \frac{1+t}{1-t}$ is increasing on $(-\infty,1)$ and the fact that $\delta_{j+1} \leq \delta_j$,
				\begin{align}
								\max_{\|x\| = 1} \frac{\sigma_{S_{3j+3}}(x)  - \sigma_{S_{3j+2}}(x)}{ \sigma_{S_{3j+2}}(x)  - \sigma_{S_{3j+1}}(x)} &\underset{\eqref{intermed}}{\leq} \frac{1 + \frac{\delta_{j+1}}{3j + 3}}{1 - (3j+1) \frac{\delta_j}{3j}} \notag\\
								&\leq\frac{1 + (3j+1)\frac{\delta_{j}}{3j}}{1 - (3j+1) \frac{\delta_j}{3j}} \notag\\
								&\leq\frac{1 + \delta_j\frac{4}{3}}{1 - \delta_j \frac{4}{3}}\label{borntobewild}.
				\end{align}
				Setting $s(t)= (1+t)/(1-t)$, we have, for all $t \leq 1/2$
				\begin{align*}
s'(t) &= \frac{2}{(1-t)^2}, \, s(0)=1\\
					s''(t) &= \frac{4}{(1-t)^3} \leq 24,\, s'(0)=2.
				\end{align*}	Thus   $s(t) \leq 1 + 2 t + 12 t^2$ on $(-\infty,1/2]$.	Since $\frac{4}{3} \delta_j \leq \frac{4}{75} \leq \frac{1}{2}$, we deduce from the previous remark and \eqref{borntobewild} above:
				\begin{align*}
									\max_{\|x\| = 1} \frac{\sigma_{S_{3j+3}}(x)  - \sigma_{S_{3j+2}}(x)}{ \sigma_{S_{3j+2}}(x)  - \sigma_{S_{3j+1}}(x)} &\leq 1 + \frac{8}{3} \delta_j + \frac{64}{3} \delta_j^2 = 1 + \delta_j\left( \frac{8}{3} + \frac{64}{3} \delta_j \right)\\
								&\leq 1 + \delta_j\left(3 + \frac{64}{3 \times 25}\right) \leq 1 + 4 \delta_j = 1 + \frac{1}{(3j+2)^l}.
				\end{align*}
				Finally using \eqref{intermed} again,
				\begin{align*}
								\min_{\|x\| = 1} \frac{\sigma_{S_{3j+3}}(x)  - \sigma_{S_{3j+2}}(x)}{ \sigma_{S_{3j+2}}(x)  - \sigma_{S_{3j+1}}(x)} &\geq \frac{1 }{(3j +2) \left( 1 + \frac{\delta_{j+1}}{3j+3} \right) - (3j+1) } \\
								&= \frac{1 }{1 + \delta_{j+1}\frac{3j+2}{3j+3}  } \\
								&\geq 1 - \delta_{j+1}\frac{3j+2}{3j+3} \geq 1 - \delta_{j+1} \geq 1 - \frac{1}{(3j+2)^l}. \end{align*}
				This proves the desired result.
\end{proof}

Combining Lemma \ref{lem:interpolGauge} and Proposition \ref{th:globallyLipshitz}, we obtain the following result.
\begin{theorem}
Let $\left( PN_i \right)_{i \in \NN^*}$ be a  strict\-ly increasing sequence of interpolable po\-ly\-gon-nor\-mal pairs.  
				Set  for $i$ in  $\NN^*$, $PN_i = \left\{ \left( A_{j,i} \right)_{j=1}^{n_i}, \left( V_{j,i} \right)_{j=1}^{n_i} \right\}$ where $n_i$  is in $\NN^*$,  denote by $T_i$ the polygon  $A_{1,i}\ldots A_{n_i,i}$, and
				assume that for all $i $ in $ \NN^*$, $T_i=3i P$ where $P$ is a fixed polygon which contains the unit disk in its interior.\\ 
				Then for any $k $ in $ \NN$, $k \geq 2$ and all $l \geq 3$, there exists a $C^k$ globally Lipschitz continuous Legendre function, $h \colon \RR^2 \mapsto \RR$, and an increasing sequence $(\lambda_i)_{i \in  \NN}$, such that for each $i $ in $ \NN$: \begin{itemize}
								\item $T_i \subset \left\{ x,\,h(x) \leq \lambda_i \right\}$,
								\item $\dist(T_i, \left\{ x,\,h(x) \leq \lambda_i \right\}) \leq \frac{1}{4(3i+2)^l}$,
								\item For any $x$ with $h(x) = \lambda_i$ and $\nabla h(x)$ is colinear to $V_i$ for each vertex $x$ of $T_i$,
								\item $h$ has positive definite Hessian and is globally Lipschitz continuous, 
								\item $h^*$ has compact domain and is $C^k$ on the interior of its domain.
				\end{itemize}
				\label{th:Legendre}
\end{theorem}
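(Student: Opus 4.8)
\emph{Overall strategy.} The statement is exactly the combination announced before it: feed an auxiliary sequence of \emph{smooth} sets, produced from the polygonal data by Lemma~\ref{lem:interpolGauge}, into the construction of Proposition~\ref{th:globallyLipshitz}, and then read off the conclusions about the polygons $T_i$ from those about the smooth sets. I would organise the proof in three phases: (1) produce the smooth sets, (2) check they meet the hypotheses of Proposition~\ref{th:globallyLipshitz}, (3) invoke that proposition and translate its output.

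\emph{Phase 1 (smooth sets).} Apply Lemma~\ref{lem:interpolGauge} to the strictly increasing sequence of interpolable polygon-normal pairs $(PN_i)_{i\in\NN^*}$ (all hypotheses of that lemma are present: interpolability, and $T_i=3iP$ with $P\supset$ unit disk), with the smoothness parameter of Lemma~\ref{lem:polyGon} chosen large enough to yield $C^k$ boundaries and with the prescribed rate exponent $l\geq 3$. This gives a strictly increasing sequence of compact convex sets $(S_i)_{i\geq 2}$ such that $S_{3j}$ interpolates the vertices and normals of $PN_j$ in the sense of Lemma~\ref{lem:polyGon} with $\dist(S_{3j},T_j)\leq\tfrac{1}{4(3j+2)^l}$, such that $S_{3j\pm1}=\tfrac{3j\pm1}{3j}S_{3j}$, and such that \eqref{eq:interpolGauge1} and \eqref{eq:interpolGauge2} hold. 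In particular $T_j=3jP\subset S_{3j}$ by \eqref{inc}. Reindexing so the family is indexed by $\NN$ changes nothing.

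\emph{Phase 2 (hypotheses of Proposition~\ref{th:globallyLipshitz}).} I must check that every consecutive pair $T_-=S_i$, $T_+=S_{i+1}$ satisfies Assumption~\ref{ass:curvature}: compactness and convexity are clear; the $C^k$ boundary with positive (non-vanishing) curvature for the $S_{3j}$ is Lemma~\ref{lem:polyGon}(i), and it is preserved by the origin-centered homothety $x\mapsto\tfrac{3j\pm1}{3j}x$ defining $S_{3j\pm1}$; nestedness is the strict monotonicity of $(S_i)$ given by Lemma~\ref{lem:interpolGauge}; and $0\in\mathrm{int}(S_i)$ because $0\in\mathrm{int}(T_j)\subset\mathrm{int}(S_{3j})$ and the rescalings are centered at $0$. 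Next, the homothety structure required by Proposition~\ref{th:globallyLipshitz}, namely $(1-\epsilon_j)^{-1}S_{3j-1}=(1+\epsilon_j)^{-1}S_{3j+1}=S_{3j}$, holds with $\epsilon_j=\tfrac{1}{3j}\in(0,1)$. Finally, \eqref{eq:interpolGauge2} with $l\geq 3$ is precisely the moderate growth bound \eqref{mg} (since $j^{-l}=O(j^{-3})$), and \eqref{eq:interpolGauge1} gives that $\inf_{\|x\|=1}\big(\sigma_{S_i}(x)-\sigma_{S_{i-1}}(x)\big)$ is bounded below by a positive constant and above by $c$, with relative increments controlled at rate $O(i^{-3})$; this is all that the argument of Proposition~\ref{th:globallyLipshitz} actually uses in place of \eqref{nd} (the normalising constant being $\inf_{\|x\|=1}\sigma_P(x)$ rather than exactly $1$, which is harmless after an innocuous rescaling).

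\emph{Phase 3 (conclusion).} Proposition~\ref{th:globallyLipshitz} then yields a convex $C^k$ function $h\colon\RR^2\to\RR$ with positive definite Hessian (hence $\nabla h$ injective, and since $\dom h=\RR^2$ the divergence condition at the boundary of the domain is vacuous, so $h$ is Legendre), globally Lipschitz, admitting each $S_{3i}$ as a sublevel set, and whose conjugate $h^*$ has compact domain and is $C^k$ and strictly convex on its interior. Writing $S_{3i}=\{x:h(x)\leq\lambda_i\}$ defines an increasing sequence $(\lambda_i)_{i\in\NN}$. Then $T_i=3iP\subset S_{3i}=\{h\leq\lambda_i\}$; $\dist(T_i,\{h\leq\lambda_i\})=\dist(T_i,S_{3i})\leq\tfrac{1}{4(3i+2)^l}$; and for any vertex $x=A_{j,i}$ of $T_i$ one has $x\in\bd(S_{3i})=\{h=\lambda_i\}$ with normal cone $V_{j,i}$ by Lemma~\ref{lem:polyGon}(iii), so $h(x)=\lambda_i$ and $\nabla h(x)$ is colinear to $V_{j,i}$. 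The remaining bullet points (positive definite Hessian, global Lipschitz continuity, compactness of $\dom h^*$, and $C^k$ regularity of $h^*$ on $\mathrm{int}(\dom h^*)$) are immediate from Proposition~\ref{th:globallyLipshitz}.

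\emph{Where the difficulty sits.} There is no genuinely new analytic obstacle at this level: the delicate estimate — integrability over $\RR_+$ of $\lambda\mapsto\sup_\theta\big(-\lambda\,\langle\partial_\lambda^2 G(\lambda,\theta),n(\theta)\rangle\big)$, which is what forces $h^*$ to have compact domain — is already carried out inside Proposition~\ref{th:globallyLipshitz}, and it is exactly the choice $l\geq 3$ in Lemma~\ref{lem:interpolGauge} that makes the relevant second differences of the Bernstein data decay like $O(i^{-3})$ and thus activates that estimate. So the only real work is the bookkeeping of Phase 2, in particular verifying that smoothness and non-vanishing curvature survive the homothetic rescalings and the concatenation of arcs used in Lemma~\ref{lem:polyGon}, and reconciling the normalisation in \eqref{nd} with the bound \eqref{eq:interpolGauge1}.
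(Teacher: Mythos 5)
Your proposal is correct and follows exactly the paper's route: the paper proves Theorem~\ref{th:Legendre} simply by combining Lemma~\ref{lem:interpolGauge} with Proposition~\ref{th:globallyLipshitz}, which is precisely your three-phase argument with the bookkeeping spelled out. Your extra care about the normalisation in \eqref{nd} versus \eqref{eq:interpolGauge1} (only a uniform positive lower bound and the $O(i^{-3})$ ratio control are actually used) is a fair and harmless refinement of what the paper leaves implicit.
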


\begin{corollary}[Continuity on the domain]
                The function $h^*$ constructed in Theorem \ref{th:Legendre} has compact polygonal domain and is continuous on this domain.
                \label{cor:legendreContinuity}
\end{corollary}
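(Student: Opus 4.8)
The plan is to derive continuity of $h^*$ on its (already established) compact domain $D$ from elementary convex analysis, using only the structural facts proved in Theorem~\ref{th:Legendre}: namely that $h$ is a $C^k$ globally Lipschitz Legendre function on $\RR^2$ with positive definite Hessian, and that $h^*$ has a \emph{compact} domain which is moreover polygonal. First I would recall that $h^*$ is a closed proper convex function (it is a Legendre conjugate of a finite convex function), hence it is lower semicontinuous on $\RR^2$ and in particular on $D$. By a classical theorem (see \cite[Theorem~10.2]{rockafellar1970convex}) a proper convex function is automatically continuous on the interior of its domain, so the only issue is continuity up to the boundary of the polygon $D$.

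The heart of the argument is thus to show that $h^*$ does not ``jump'' at the boundary, i.e.\ that $\limsup_{z\to z_0,\,z\in D} h^*(z)\le h^*(z_0)$ for each $z_0\in\bd D$; combined with lower semicontinuity this gives continuity. Here is where I would use the polygonal structure coming from the polygon-normal interpolation: the boundary of $D$ consists of finitely many segments, and along each the support-function estimates \eqref{eq:interpolGauge1}--\eqref{eq:interpolGauge2} (via Lemma~\ref{lem:legendreBounded} and the computation \eqref{e:int}) give a \emph{uniform} bound $h^*\le 0$ on $D$ together with a controlled behaviour of $\langle n(\theta),G(\lambda,\theta)\rangle - \lambda\langle \partial_\lambda G,n(\theta)\rangle$ as $\lambda\to\infty$, which is exactly the quantity controlling $h^*$. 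Concretely, I would exploit the formula \eqref{e:h}, $h^*(z)=\sigma_{h(y)}(\nabla h(y))-h(y)$ with $y=\nabla h^*(z)$, and note that as $z\to z_0\in\bd D$ we have $\|y\|=\|\nabla h^*(z)\|\to\infty$ by the Legendre property of $h^*$; then the integrability estimate established at the end of the proof of Proposition~\ref{th:globallyLipshitz} shows that $h^*(z)$ has a finite limit along any such approach, and upper semicontinuity of $\sup$ over the compact circle $\RR/2\pi\ZZ$ of the boundary values pins this limit down from above by $h^*(z_0)$.

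Alternatively — and this is the cleaner route I would actually write — one invokes the following soft fact: a convex function that is bounded on a neighborhood of a point (inside its domain) and whose domain is a polytope $D$ is continuous on all of $D$ provided it is lower semicontinuous there, because on a polytope one can write any boundary point $z_0$ as a limit of points $z_t = (1-t)z_0 + t z_1$ with $z_1\in\inte D$ fixed, and convexity gives $h^*(z_t)\le (1-t)h^*(z_0)+t\,h^*(z_1)$, so $\limsup_{t\to 0}h^*(z_t)\le h^*(z_0)$; sweeping $z_1$ over an interior simplex and using that $D$ is a polytope (so every boundary point is reached by such radial limits from a fixed interior point) upgrades this to the full $\limsup$. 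Together with lower semicontinuity of the conjugate this yields continuity on $D$. I expect the main obstacle to be the \emph{directional} refinement: showing that the radial upper bound along segments genuinely controls the unrestricted $\limsup$, which is where the polytope (finitely many faces) hypothesis from Theorem~\ref{th:Legendre} and the compactness of $D$ are essential; for a general closed convex domain this step can fail.
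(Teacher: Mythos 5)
There is a genuine gap, and it concerns both halves of the corollary. First, you treat the fact that $\dom h^*$ is a \emph{polygon} as one of the ``structural facts proved in Theorem~\ref{th:Legendre}'', but the theorem only asserts that $h^*$ has \emph{compact} domain; polygonality is part of what the corollary claims and must be proved. The paper's proof spends its first half on exactly this point: it compares $h$ with the polyhedral gauge $\omega$ of $3P$ (so that $\omega=\sigma_{P^{\circ}}$ and $\omega^*$ is the indicator of the polytope $P^{\circ}$), uses the asymptotics $\lambda_i=\alpha i+O(1)$ coming from $K_i=1+O(1/i^3)$ in the proof of Proposition~\ref{th:globallyLipshitz}, and sandwiches the sublevel sets $\{\omega\leq i-1\}\subset\{h\leq\lambda_{i-1}\}\subset\{h\leq\lambda_i\}\subset\{\omega\leq i+1\}$ to conclude $|h-\omega|\leq C$ uniformly on $\RR^2$; this forces $h^*$ to be finite exactly on $\dom\omega^*=P^{\circ}$. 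Without some such argument you have no polytope on which to run your continuity argument, and nothing in your proposal supplies it.

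Second, the continuity-up-to-the-boundary step is left unresolved. Your ``cleaner route'' establishes only \emph{radial} upper semicontinuity along segments $z_t=(1-t)z_0+t z_1$, and you yourself flag that upgrading this to the unrestricted $\limsup$ is the main obstacle; that upgrade is precisely the content of the Gale--Klee--Rockafellar theorem on convex functions with polyhedral (locally simplicial) domain, which the paper simply invokes via \cite{gale68convex} once the domain is identified with $P^{\circ}$. ``Sweeping $z_1$ over an interior simplex'' is not a proof of it. Your first route fares worse: the integrability estimate from Proposition~\ref{th:globallyLipshitz} yields only a uniform upper bound ($h^*\leq 0$ after shifting values), and a bounded, lower semicontinuous convex function on a compact convex set can fail to be upper semicontinuous at boundary points when the domain is not polyhedral, so boundedness together with a vague appeal to ``upper semicontinuity of the sup over $\RR/2\pi\Z$'' cannot pin the boundary limit from above. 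In short, the two ingredients the paper actually uses --- the uniform comparison $|h-\omega|\leq C$ identifying $\dom h^*=P^{\circ}$, and the Gale--Klee--Rockafellar upper semicontinuity result on polytopes --- are respectively absent and only gestured at in your proposal.
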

\begin{proof}
    Since $P$ is a polygon and contains the unit Euclidean disk, the gauge function of $3P$ is polyhedral with full domain $\RR^2$, call it $\omega$.  Denote by $P^{\circ}$ the polar of $P$. This is a polytope and since $\omega$ is the gauge of $P$, we actually have $\omega = \sigma_{P^{\circ}}$, the support function of the polar of $P$ \cite[Theorem 1.7.6]{schneider1993convex}. Hence the the convex conjugate of $\omega$ is the indicator of the polytope $P^\circ$ \cite[Theorem 13.2]{rockafellar1970convex}.
    
    It can be easily seen from the proof of Proposition \ref{th:globallyLipshitz} that $\lambda_i = \alpha i + r_i$ with $r(i) = O(1)$ as $i \to \infty$. Without loss of generality, we may suppose that $\alpha = 1$ (this is a simple rescaling) so that there is a positive constant $c$ such that $|\lambda_i - i| \leq c$ for all $i$. 
    
    Let $h$ be given as in Theorem \ref{th:Legendre}, fix $i \geq 1$ and $x \in \RR^2$ such that  $\lambda_{i-1} \leq h(x) \leq \lambda_i$. We have in $\R^2$
    \begin{align*}
       \{y:\, \omega(y) \leq i-1 \} \subset \{y:\, h(y) \leq  \lambda_{i-1} \} \subset  \{y:\, h(y) \leq \lambda_{i} \} \subset \{y:\, \omega(y) \leq i+1 \} 
    \end{align*}
    and hence
    \begin{align*}
        i - 1 \leq \omega(x) \leq i+1
    \end{align*}
    and we deduce that
    \begin{align*}
        \omega(x) - 2 - c \leq i - 1 -c \leq  \lambda_{i-1} \leq h(x) \leq \lambda_i \leq i+c \leq \omega(x) + c + 1.
    \end{align*}
    Since $i$ was arbitrary, this shows that there exists a constant $C>0$ such that $|h(x) - \omega(x)|\leq C$ for all $x \in \RR^2$. Recall that $z \mapsto \sup_{y \in \RR^2} \left\langle y,z\right\rangle - \omega(y)$ is the indicator function of $P^\circ$, hence,
    \begin{align*}
        z \in P^{\circ} & \Rightarrow \sup_{y \in \RR^2} \left\langle y,z\right\rangle - \omega(y) = 0 \Rightarrow \sup_{y \in \RR^2} \left\langle y,z\right\rangle - h(y) \leq C < +\infty \\
        z \not\in P^{\circ} & \Rightarrow \sup_{y \in \RR^2} \left\langle y,z\right\rangle - \omega(y) = +\infty \Rightarrow \sup_{y \in \RR^2} \left\langle y,z\right\rangle - h(y) = +\infty
    \end{align*}
    which shows that the domain of $h^*$ is actually $P^{\circ}$ which is a polytope. Now, $h^*$ is convex and lower semicontinuous on $P^{\circ}$, invoking the results of \cite{gale68convex}, it is also upper semicontinuous on $B^*$ and finally it is continuous on $B^*$.
\end{proof}

\begin{corollary}[A pathological Legendre function]
    For any $\theta \in \left( \frac{-\pi}{4},\frac{\pi}{4} \right)$ there exists a Legendre function $h\colon \RR^2 \mapsto \RR$ whose domain is a closed square, continuous on this domain and $C^k$ on its interior, such that for all $i \in \NN^*$, $\nabla h^*(i, 0)$ is proportional to $(\cos(\theta), (-1)^i \sin(\theta))$.
    \label{cor:legendreOscillating}
\end{corollary}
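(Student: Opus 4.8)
The plan is to apply Theorem~\ref{th:Legendre} to a carefully chosen sequence of polygon-normal pairs $(PN_i)_{i\in\NN^*}$ with $T_i = 3iP$, where the fixed polygon $P$ is chosen so that its polar $P^\circ$ is a square and where the prescribed normals $V_i$ at the vertices of $T_i$ oscillate with $i$. The oscillation of the normals will force the oscillation of $\nabla h^*(i,0)$ via the identity $\nabla h^*(\nabla h(x)) = x$ combined with the fact that $\nabla h$ at a vertex of $T_i$ is colinear to the prescribed normal there.

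First I would fix $P$ to be a square (say with vertices at $(\pm 1,0),(0,\pm 1)$, suitably scaled so it contains the unit disk, or a regular polygon adapted to the target $\theta$); then by the computation in Corollary~\ref{cor:legendreContinuity}, the polar $P^\circ$ is again a square, so the domain of $h^*$ will be a closed square as required, and $h^*$ is continuous on it by Corollary~\ref{cor:legendreContinuity} and $C^k$ on the interior by Theorem~\ref{th:Legendre}. The key point is the choice of the normal data: since all $T_i$ are homothetic, they share the same vertices up to scaling, and at a chosen vertex direction we want the outward normal $V_i$ to the smooth interpolant $S_{3i}$ to point in the direction $(\cos\theta,(-1)^i\sin\theta)$. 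The angle condition in Definition~\ref{def:interpolable} (the angle between $V_i$ and each neighboring face lies in $(\pi/2,\pi)$) is satisfied for $\theta\in(-\pi/4,\pi/4)$ provided $P$ is chosen with a vertex whose two incident edges make the bisector direction $(1,0)$, because then perturbing the normal by an angle $\theta$ of magnitude less than $\pi/4$ keeps it strictly interior to the normal cone; this is where the restriction on $\theta$ enters. One must also ensure the resulting polygon-normal pairs remain interpolable simultaneously for the oscillating choice — since the choice at each $i$ is independent and the constraint is a fixed open condition on the angle, this is immediate.

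Next I would extract the conclusion. By Theorem~\ref{th:Legendre}, $h(x)=\lambda_i$ and $\nabla h(x)$ colinear to $V_i$ at each vertex $x$ of $T_i$; rescaling the $\lambda_i$ as in Corollary~\ref{cor:legendreContinuity} so that $\lambda_i = i$ (or $\lambda_i \sim i$), and by the explicit gradient formula in Proposition~\ref{prop:diffeoMorphism}, $\nabla h(x) = \bigl\langle \tfrac{\partial G}{\partial\lambda},n(\theta)\bigr\rangle^{-1} n(\theta)$ points exactly along the unit normal $n(\theta)$ at that boundary point, with a positive scalar. Since $h$ is Legendre, $\nabla h^*$ inverts $\nabla h$, so $\nabla h^*\bigl(\nabla h(x_i)\bigr) = x_i$ where $x_i$ is the chosen vertex of $T_i$ lying on the ray of direction $V_i \propto (\cos\theta,(-1)^i\sin\theta)$; and $\nabla h(x_i)$ is a positive multiple of the unit vector along $V_i$. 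It remains to arrange that this positive multiple equals $i$ (up to a further monotone reparametrization absorbed into the construction): one tunes the scaling of the $T_i$ — equivalently the constant $\alpha$ in $\lambda_i=\alpha i + O(1)$ and the homothety ratios — and possibly post-composes $h$ with an increasing convex function, so that the value $\|\nabla h(x_i)\|$ is a prescribed sequence; then $\nabla h^*$ maps $(\|\nabla h(x_i)\|)\cdot(\text{unit vector along }V_i)$ to $x_i$, and choosing the vertex $x_i$ on the positive $x$-axis... actually here one takes $V_i$ itself in direction $(\cos\theta,(-1)^i\sin\theta)$ so that $\nabla h(x_i)$ is colinear to that, hence $\nabla h^*$ evaluated along that direction returns $x_i$; finally one reparametrizes so the sample points land exactly at abscissa $i$ on the first coordinate axis of the dual, i.e. at $(i,0)$ — wait, $(i,0)$ is on the axis, so the requirement ``$\nabla h^*(i,0)$ proportional to $(\cos\theta,(-1)^i\sin\theta)$'' means the preimage under $\nabla h^*$... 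Let me restate: we want $x_i:=\nabla h^*(i,0)$ to be proportional to $(\cos\theta,(-1)^i\sin\theta)$, i.e. the boundary point $x_i$ of the level set $\{h\le\lambda_i\}$ lying in direction $(\cos\theta,(-1)^i\sin\theta)$ should have $\nabla h(x_i) = (i,0)$, i.e. its unit normal is the fixed vector $(1,0)$ while its position oscillates; so the polygon vertices must be placed in the oscillating directions while their common prescribed normal is the fixed $(1,0)$ — which is exactly the interpolability setup with $V_i=(1,0)$ for the vertex sitting at angle $(-1)^i\theta$ from the axis.

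The main obstacle is bookkeeping the two dual roles correctly: in the dual picture the \emph{gradient} value $\nabla h(x_i)=(i,0)$ must be the \emph{fixed} direction $(1,0)$ scaled by $i$, while the \emph{position} $x_i$ must be the \emph{oscillating} one. This forces the polygon $T_i=3iP$ to have, among its vertices (after smooth interpolation via Lemma~\ref{lem:polyGon}), a vertex in direction $(\cos((-1)^i\theta),\sin((-1)^i\theta))$ at which the prescribed outward normal is $(1,0)$; interpolability requires $(1,0)$ to lie strictly between the normals of the two faces adjacent to that vertex, which — since the faces are tilted by $\pm$ something depending on $\theta$ relative to the tangent there — holds precisely when $|\theta|<\pi/4$ for an appropriately chosen base polygon $P$ (e.g. a square rotated so that this vertex direction is a diagonal). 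One then checks that the abscissa of $\nabla h(x_i)$ can be normalized to be exactly $i$ by the rescaling $\alpha=1$ already available in Theorem~\ref{th:Legendre}/Corollary~\ref{cor:legendreContinuity} together with a final monotone post-composition; since $\nabla h^*$ is the inverse of $\nabla h$ and $\nabla h(x_i)\parallel(1,0)$, we get $\nabla h^*(i,0)=x_i\parallel(\cos\theta,(-1)^i\sin\theta)$, which is the claim. The continuity on the closed square and the $C^k$ regularity on its interior are then inherited verbatim from Theorem~\ref{th:Legendre} and Corollary~\ref{cor:legendreContinuity}.
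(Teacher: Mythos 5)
Your opening plan---diamonds $T_i=3iP$ with $P$ the $\ell^1$-ball, bisector/canonical normals everywhere except at the rightmost corner $(6i,0)$, where the prescribed normal is $w_i:=(\cos\theta,(-1)^i\sin\theta)$---is exactly the paper's construction, and it already finishes the job: after rescaling by $6$, the point $(i,0)$ is a vertex of the $i$-th level set of the full-domain interpolant furnished by Theorem~\ref{th:Legendre}, whose third bullet says the gradient of that interpolant at each vertex is colinear to the prescribed normal. Calling that interpolant $h^*$ and setting $h:=(h^*)^*$, Corollary~\ref{cor:legendreContinuity} gives the closed-square domain and continuity, and $\nabla h^*(i,0)\propto w_i$ is literally the prescribed-normal property: no inversion of gradients and no control of gradient norms is needed. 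Your interpolability check (the restriction $|\theta|<\pi/4$ at the corner of the diamond) is the right one.

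The problem is the mid-proof ``correction'' in which you swap the roles: vertices placed in the oscillating directions with fixed normal $(1,0)$, plus the normalization $\nabla h(x_i)=(i,0)$. As written this does not prove the claim. First, it controls the wrong object: with that prescription the interpolant $g$ with level sets $3iP$ satisfies $\nabla g(v_i)=c_i(1,0)$ at a vertex $v_i\approx 6i\,w_i$, so you learn about $\nabla h^*$ along the rays $\RR_+ w_i$ (equivalently about $\nabla h$ at uncontrolled axis points $(c_i,0)$), but nothing about $\nabla h^*$ at the specific points $(i,0)$; to convert this into the claim you would need $\nabla h(x_i)$ to equal $(i,0)$ \emph{exactly}, an exact gradient-norm requirement that Lemma~\ref{lem:polyGon} and Theorem~\ref{th:Legendre} do not provide (they prescribe directions only). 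Second, the normalization is internally inconsistent: if the ``$h$'' in $\nabla h(x_i)=(i,0)$ is the function built by Theorem~\ref{th:Legendre}, it is globally Lipschitz, so $\|\nabla h\|$ is bounded and cannot equal $i$ for all $i$; if instead it is the square-domain conjugate, then $x_i$, a vertex of $3iP$, lies outside its domain for large $i$. Moreover, a base polygon with vertices in both directions $\pm\theta$ would in general have a polar that is not a square, losing the ``domain is a closed square'' conclusion; the paper keeps $P$ a fixed diamond precisely so that $P^{\circ}$ is a square and only the normal at the fixed rightmost corner oscillates. In short: your first bookkeeping (oscillating normal at the fixed vertex on the axis) is correct and is the paper's proof; the final version should be discarded.
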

\begin{proof}
				For $x=(u,v)$, set $\|x\|_1=|u|+|v|$, and  let $P = \left\{ x \in \RR^2,\, \|x\|_1 \leq 2 \right\}$. Let us construct  a strictly increasing sequence of interpolable polygon-normal pairs $\left( PN_i \right)_{i \in \NN^*}$ as follows, we fix $\theta \in \left( \frac{-\pi}{4},\frac{\pi}{4} \right)$ and set for all $i\in \NN^*$ :
				\begin{itemize}
								\item $T_i = 3i P$, the polygon associated to the $i$-th term  $PN_i$ of the sequence,
								\item except at the rightmost corner, consider the normals given by the canonical basis vectors and their opposite,
								\item at the rightmost corner, $(6i,0)$, one chooses the normal given by the vector $$(\cos(\theta), (-1)^i \sin(\theta)).$$
				\end{itemize}
				We now invoke Theorem \ref{th:Legendre} to obtain a Lipschitz continuous Legendre function, denoted $h^*$, with full domain having all the $T_i$ as sublevel sets and satisfying the hypotheses of the corollary. Rescaling by a factor $6$ and setting $h = h^{**}$ gives the result.
\end{proof}

\section{Counterexamples in continuous optimization}

We are now in position to apply our interpolation results to build counterexamples to  classical problems in convex optimization. We worked on situations ranging from structural questions to qualitative behavior of algorithms and ODEs. Through  9 counterexamples we tried to cover a large spectrum but there are many more possibilities that are left for future research. Some example are constructed from decreasing sequences of convex sets, they can be interpolated using Theorem \ref{th:smoothinterp} with $I = \ZZ$, indexing the sequence with negative indices and adding artificially additional sets for positive indices. Nonetheless we sometimes depart from the notations of the first sections and index these sequences by $\NN$ even though they are decreasing for simplification purposes. 

\subsection{Kurdyka-\L ojasiewicz inequality may not hold}
The following result is proved in \cite{bolte2010characterization}, it was crucial to construct a $C^2$ convex function which does not satisfy Kurdyka-\L ojasiewicz (KL) inequality.  
\begin{lemma}{\cite[Lemma 35]{bolte2010characterization}}
				There exists a decreasing sequence of compact convex sets $\left( T_i \right)_{i\in \NN}$ such that for any $i $ in $ \NN$, $T_- = T_{i+1}$ and $T_+ = T_i$ satisfy Assumption \ref{ass:curvature} and
				\begin{align*}
								\sum_{i=0}^{+\infty} \dist(T_i, T_{i+1}) = +\infty
				\end{align*}
				\label{lem:tamsLoja}
\end{lemma}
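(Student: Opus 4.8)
The plan is to work entirely with the support functions of the bodies. Writing $p_i:=\sigma_{T_i}$, the requirements of Assumption~\ref{ass:curvature} translate into: each $p_i$ is a $C^k$ function on $\RR/2\pi\Z$ with $p_i+p_i''>0$ (so that $T_i$ has $C^k$ boundary with positive curvature) and $p_i>0$ (so that $0\in\inte T_i$), with $p_{i+1}<p_i$ and $p_0$ bounded (so that $T_0$ is compact). The quantity to be blown up is governed by the standard identity for nested convex bodies $\dist(T_i,T_{i+1})=\sup_{\|u\|=1}\bigl(\sigma_{T_i}(u)-\sigma_{T_{i+1}}(u)\bigr)=\|p_i-p_{i+1}\|_\infty$. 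Hence it suffices to produce a constant $\rho>0$ and nonnegative $C^\infty$ ``bumps'' $\beta_j$ on $\RR/2\pi\Z$ such that, setting $p_i:=\rho+\sum_{j\geq i}\beta_j$, all $p_i$ are $C^k$ with $p_i+p_i''>0$, with $\sum_j\beta_j$ bounded but $\sum_j\|\beta_j\|_\infty=+\infty$; then $\dist(T_i,T_{i+1})=\|\beta_i\|_\infty$ (the $\beta_j$ being chosen with disjoint ``plateaux''), and the strict inclusions $T_{i+1}\subset\inte T_i$ together with $0\in\inte T_i$ follow since the $\beta_j$ are small and $p_i\geq\rho$.

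For the perturbations I would group the $\beta_j$ into scales $n\geq 2$: at scale $n$ take $N_n$ translated copies of a fixed smooth template bump $\psi$ (equal to $1$ on $[1/4,3/4]$, supported in $[0,1]$, with all derivatives vanishing at the endpoints), each rescaled to width $w_n$ and height $h_n$, with the $N_n$ copies tiling a fixed arc $J\subset\RR/2\pi\Z$, so that at each point of $J$ exactly one scale-$n$ bump is active and outside $J$ every $p_i$ equals $\rho$. Taking $w_n=1/\log n$, $h_n=1/\bigl(n(\log n)^2\bigr)$ and $N_n=|J|/w_n\asymp\log n$ gives $\sum_j\|\beta_j\|_\infty=\sum_n N_nh_n\asymp\sum_n\tfrac1{n\log n}=+\infty$, while $\sum_nh_n=\sum_n\tfrac1{n(\log n)^2}<+\infty$ keeps $p_0$ bounded; one still has $\sum_n (h_n/w_n^2)^2\asymp\sum_n n^{-2}<\infty$, which will be the crucial feature.

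The delicate point — and the main obstacle — is the positive-curvature requirement $p_i+p_i''>0$. A scale-$n$ bump has $|\beta_j''|\asymp h_n/w_n^2\asymp 1/n$, and since at a given point one such term occurs for \emph{every} scale, the naive bound on $p_0''$ diverges like $\sum_n 1/n$. The construction is rescued by the fact that $\psi''$ has zero mean and changes sign, so that $p_0''(\theta)=\sum_n (h_n/w_n^2)\,\psi''\bigl(s_n(\theta)\bigr)$ is a series with oscillating terms; if the bump positions at successive scales are \emph{staggered} so that the arguments $s_n(\theta)\in[0,1]$ are suitably equidistributed as $n$ varies, the square-summability $\sum_n(h_n/w_n^2)^2<\infty$ forces this series to converge uniformly, with sum bounded in modulus by a constant $C$. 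Choosing $\rho>C$ then yields $p_0+p_0''\geq\rho+(\text{nonnegative})-C>0$, and the same estimate holds for every $p_i$ since its perturbation is a sub-series of that of $p_0$; a similar estimate of $\sum_n (h_n/w_n^{\,r})\psi^{(r)}(s_n)$ for $r\le k$ gives $p_i\in C^k$. The resulting nested family $(T_i)$ then satisfies all requirements of Assumption~\ref{ass:curvature} and $\sum_i\dist(T_i,T_{i+1})=\sum_n N_nh_n=+\infty$. (This is precisely the construction performed in \cite[Lemma~35]{bolte2010characterization}, to which we refer for the details of the staggering and of the uniform-convergence estimate.)
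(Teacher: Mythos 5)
Your reduction to support functions is the right frame (for nested compact convex bodies, the Hausdorff distance is indeed $\|p_i-p_{i+1}\|_\infty$, and the class $C^k_+$ corresponds to $p$ of class $C^k$ with $p+p''>0$), but note first that the paper itself does not prove this lemma: it only cites \cite[Lemma 35]{bolte2010characterization}, so your attempt has to stand on its own — and as written it has two genuine gaps. The smaller one: with $p_i=\rho+\sum_{j\geq i}\beta_j$ and $\beta_i$ a bump supported on a small arc, you get $p_i=p_{i+1}$ on all directions outside that arc, hence $T_{i+1}$ and $T_i$ share supporting hyperplanes and boundary points, so $T_{i+1}\subset\inte T_i$ \emph{fails}; your assertion that strict inclusion follows ``since the $\beta_j$ are small and $p_i\geq\rho$'' is not correct (for compact bodies, $T_{i+1}\subset\inte T_i$ is equivalent to $p_{i+1}<p_i$ on the whole circle). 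This is repairable — add to each $\beta_i$ a small constant $\epsilon_i>0$ with $\sum_i\epsilon_i<\infty$, which costs nothing in curvature — but the repair is not in your text.

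The serious gap is the step you yourself flag as delicate and then do not prove: the positivity of $p_i+p_i''$ (and $C^k$ regularity) when the per-scale second-derivative contributions $h_n/w_n^2\asymp 1/n$ are not absolutely summable. The principle you invoke — that square-summability of the coefficients together with ``suitably equidistributed'' arguments forces $\sum_n (h_n/w_n^2)\,\psi''(s_n(\theta))$ to converge uniformly with a uniform bound — is not a theorem; for a deterministic staggering there is no automatic cancellation at every fixed $\theta$, and with nonnegative bumps of slowly varying width ($w_n=1/\log n$) consecutive terms are strongly correlated, so the needed sign alternation is exactly what would have to be designed and estimated (and the situation worsens for the higher derivatives $r\leq k$, where $h_n/w_n^{\,r}\asymp(\log n)^{r-2}/n$). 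Since controlling curvature under a divergent sum of bump heights is the entire content of the lemma, deferring this estimate — and ultimately pointing back to \cite{bolte2010characterization} ``for the details of the staggering and of the uniform-convergence estimate'' — means the proposal does not actually establish the statement; it reproduces the citation that the paper already makes, plus an unproved convergence claim.
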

As a corollary, we improve the   counterexample in \cite{bolte2010characterization} and provide a $C^k$ convex coun\-ter\-examples for any $k\geq 2 $ in $ \NN$. 
\begin{corollary}[Smooth convex functions are not KL in general]
                There exists a $C^k$ convex function $f \colon \RR^2 \mapsto \RR$ which does not satisfy KL inequality. More precisely, for any $r > \inf f$ and $\varphi \colon [\inf f, r] \mapsto \RR$ continuous and differentiable on $(\inf f, r)$ with $\varphi' > 0$ and $\varphi(\inf f) = 0$, we have
                \begin{align*}
                    \inf \{\| \nabla (\varphi \circ f)(x) \|: \: x\in \R^2, \,\inf f < f(x)  < r \} = 0.
                \end{align*}
\end{corollary}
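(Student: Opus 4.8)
The strategy is to apply Theorem~\ref{th:smoothinterp} to the sequence $(T_i)_{i\in\NN}$ furnished by Lemma~\ref{lem:tamsLoja}, after reindexing it into a $\ZZ$-indexed increasing sequence (say $\hat T_{-i} = T_i$ for $i\geq 0$, and adjoining artificial sets $\hat T_j$ for $j\geq 1$ growing to fill the plane, which is harmless). This yields a $C^k$ convex $f\colon\RR^2\to\RR$ with positive definite Hessian off $\argmin f$ and with the $T_i$ among its sublevel sets, say $T_i = \{f\leq \mu_i\}$ for a strictly decreasing sequence $\mu_i\downarrow \inf f$. The point is then purely metric: the level sets of $f$ realize the sets $T_i$, and $\sum_i \dist(T_i,T_{i+1}) = +\infty$ will force any reparametrized function $\varphi\circ f$ to have gradient arbitrarily small somewhere between $\inf f$ and $r$.

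First I would fix $r>\inf f$ and $\varphi$ as in the statement, and pick $N$ large enough that $\mu_i < r$ for all $i\geq N$. For each such $i$, consider any point $x_i$ on the boundary of $T_{i+1}$, i.e.\ $f(x_i)=\mu_{i+1}$, and the segment of a gradient curve (or simply the straight segment realizing the distance, controlled via the mean value inequality) joining the level set $\{f=\mu_{i+1}\}$ to $\{f=\mu_i\}$. Along such a path the value of $\varphi\circ f$ increases by exactly $\varphi(\mu_i)-\varphi(\mu_{i+1})$, while the Euclidean length of the path is at least $\dist(T_i,T_{i+1})$ (since the path must exit $T_{i+1}$ and reach $T_i^c$; more precisely $\dist(\{f=\mu_{i+1}\},\{f=\mu_i\}) \geq \dist(T_i,T_{i+1})$ because $\{f=\mu_i\}\subset T_i$ and the boundary of $T_{i+1}$ is $\{f=\mu_{i+1}\}$). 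Hence by the mean value theorem there is a point $y_i$ with $\mu_{i+1}<f(y_i)<\mu_i$ and
\[
\|\nabla(\varphi\circ f)(y_i)\| \;\leq\; \frac{\varphi(\mu_i)-\varphi(\mu_{i+1})}{\dist(T_i,T_{i+1})}.
\]

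Now I would argue by contradiction: if $\inf\{\|\nabla(\varphi\circ f)(x)\| : \inf f < f(x) < r\} = \delta > 0$, then the displayed bound gives $\varphi(\mu_i)-\varphi(\mu_{i+1}) \geq \delta\,\dist(T_i,T_{i+1})$ for every $i\geq N$. Summing over $i\geq N$, the left-hand side telescopes to $\varphi(\mu_N) - \lim_i\varphi(\mu_i) = \varphi(\mu_N) - \varphi(\inf f) = \varphi(\mu_N) < \infty$ (using continuity of $\varphi$ at $\inf f$ and $\varphi(\inf f)=0$), whereas the right-hand side is $\delta\sum_{i\geq N}\dist(T_i,T_{i+1}) = +\infty$ by Lemma~\ref{lem:tamsLoja}, a contradiction. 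This proves $\inf\{\|\nabla(\varphi\circ f)(x)\|\} = 0$, which is precisely the failure of the Kurdyka--\L ojasiewicz inequality.

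The only delicate point is the geometric lower bound relating the arclength of a value-increasing path between consecutive level sets to $\dist(T_i,T_{i+1})$, and more basically the assertion that one can find admissible test points $y_i$ with values strictly between $\inf f$ and $r$; both are straightforward once one notes that $f$ is $C^k$ with non-vanishing gradient off $\argmin f$, so its level sets are exactly the prescribed boundaries and one may integrate $\nabla(\varphi\circ f)$ along any rectifiable path. I expect no real obstacle beyond bookkeeping with the indices introduced by the reindexing step.
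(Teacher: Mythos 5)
Your overall architecture matches the paper's: interpolate the sets of Lemma~\ref{lem:tamsLoja} via Theorem~\ref{th:smoothinterp} (after reversing the indices and adjoining artificial outer sets), then derive the failure of KL from the divergence of $\sum_i \dist(T_i,T_{i+1})$ --- a step the paper delegates to Theorem~36 of \cite{bolte2010characterization}. But your execution of that second step has a genuine gap, located in the inequality $\|\nabla(\varphi\circ f)(y_i)\|\leq \bigl(\varphi(\mu_i)-\varphi(\mu_{i+1})\bigr)/\dist(T_i,T_{i+1})$. Two things go wrong. First, $\dist(T_i,T_{i+1})$ is the \emph{Hausdorff} distance, and it does not lower bound the length of an arbitrary path joining $\{f=\mu_{i+1}\}$ to $\{f=\mu_i\}$; only the minimal gap $\inf\{\|x-y\|: x\in\bd T_{i+1},\, y\in\bd T_i\}$ does. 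This is fatal rather than cosmetic: for \emph{any} nested sequence of compact convex sets the gaps are summable (for every unit vector $u$ the gap is at most $\sigma_{T_i}(u)-\sigma_{T_{i+1}}(u)$, and these differences telescope to a finite quantity), so no contradiction can ever be extracted from gaps --- the whole point of Lemma~\ref{lem:tamsLoja} is that the Hausdorff distances diverge even though the gaps cannot. Second, the mean value theorem along a straight segment (or any prescribed path) produces a point where the derivative of $\varphi\circ f$ \emph{along the path} equals the value change divided by the length; since that derivative is bounded above by $\|\nabla(\varphi\circ f)\|$, this yields a \emph{lower} bound on the gradient norm at some point, not the upper bound your contradiction needs.

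The repair --- essentially the argument of \cite{bolte2010characterization} that the paper cites --- requires both a specific starting point and a specific path: take $x_i\in\bd T_i$ realizing the Hausdorff distance, i.e.\ $\dist(x_i,T_{i+1})=\dist(T_i,T_{i+1})$, and follow the arclength-parametrized steepest-descent curve of $f$ from $x_i$ until it first meets $T_{i+1}$ (it does, since $f$ has no critical values in $(\inf f,\mu_i]$). Along that curve the derivative of $\varphi\circ f$ with respect to arclength equals $-\|\nabla(\varphi\circ f)\|$, so under the assumption $\|\nabla(\varphi\circ f)\|\geq\delta$ on the strip $\{\inf f< f< r\}$ its length is at most $\bigl(\varphi(\mu_i)-\varphi(\mu_{i+1})\bigr)/\delta$, while it is at least $\dist(x_i,T_{i+1})=\dist(T_i,T_{i+1})$; summing over $i$ then gives exactly the contradiction of your final paragraph. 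A minor further remark: the paper is more careful upstream, re-approximating the $T_i$ first by polygons and then by positively curved $C^k$ sets (Lemma~\ref{lem:polyGon}, Remark~\ref{rem:noNormal}) so that Assumption~\ref{ass:curvature} holds for the prescribed $k$ while the divergence is preserved; taking Lemma~\ref{lem:tamsLoja} at face value, as you do, is acceptable given its statement here, but the metric step above is where your proof genuinely breaks.
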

\begin{proof}
				Using \cite[Theorem 1.8.13]{schneider1993convex}, each $T_i$ can be approximated up to arbitrary precision by a polygon. Hence we may assume that all $T_i$ are polygonal while preserving the property of Lemma \ref{lem:tamsLoja} as well as Assumption \ref{ass:curvature}. Furthermore, using Lemma \ref{lem:polyGon} and Remark \ref{rem:noNormal} each $T_i$ can in turn be approximated with arbitrary precision by a convex set with $C^k$ boundary and positive curvature. Hence we may also assume that all $T_i$ satisfy both the result of Lemma \ref{lem:tamsLoja} and have $C^k$ boundary with nonvanishing curvature. Reversing the order of the sets and adding additional sets artificially, we are in the conditions of application of Theorem \ref{th:smoothinterp} with $I = \ZZ$ and the resulting $f$ follows from the same argument as in \cite[Theorem 36]{bolte2010characterization}.
\end{proof}

\subsection{Block coordinate descent  may not converge}
\begin{figure}[H]
				\centering
				\includegraphics[width=.45\textwidth]{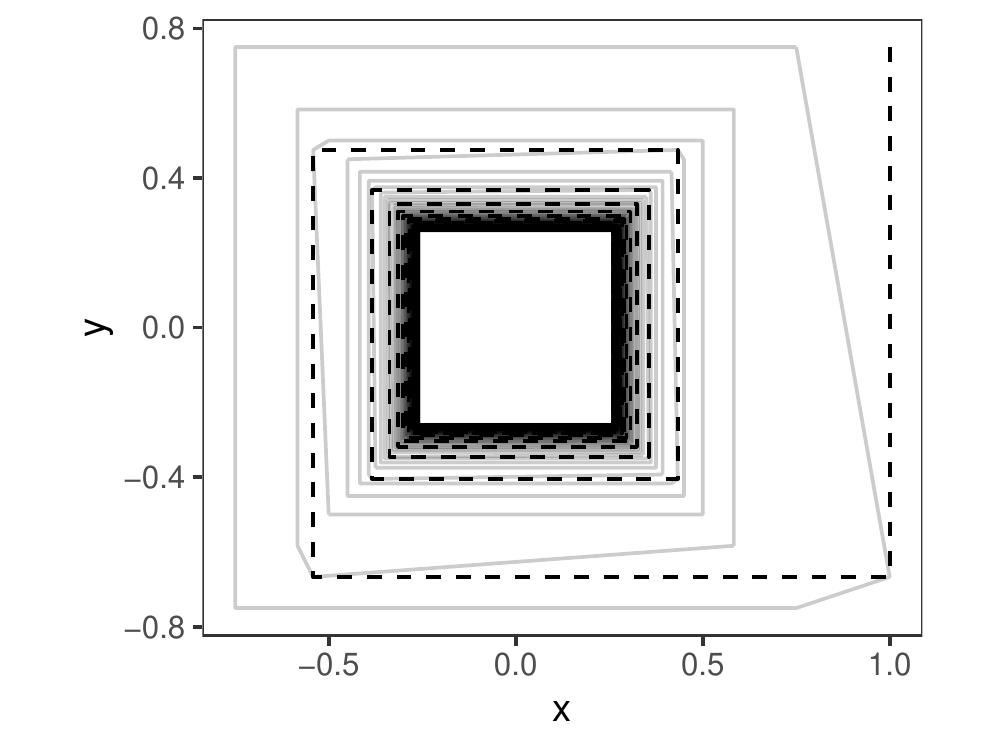}\quad \includegraphics[width=.4\textwidth]{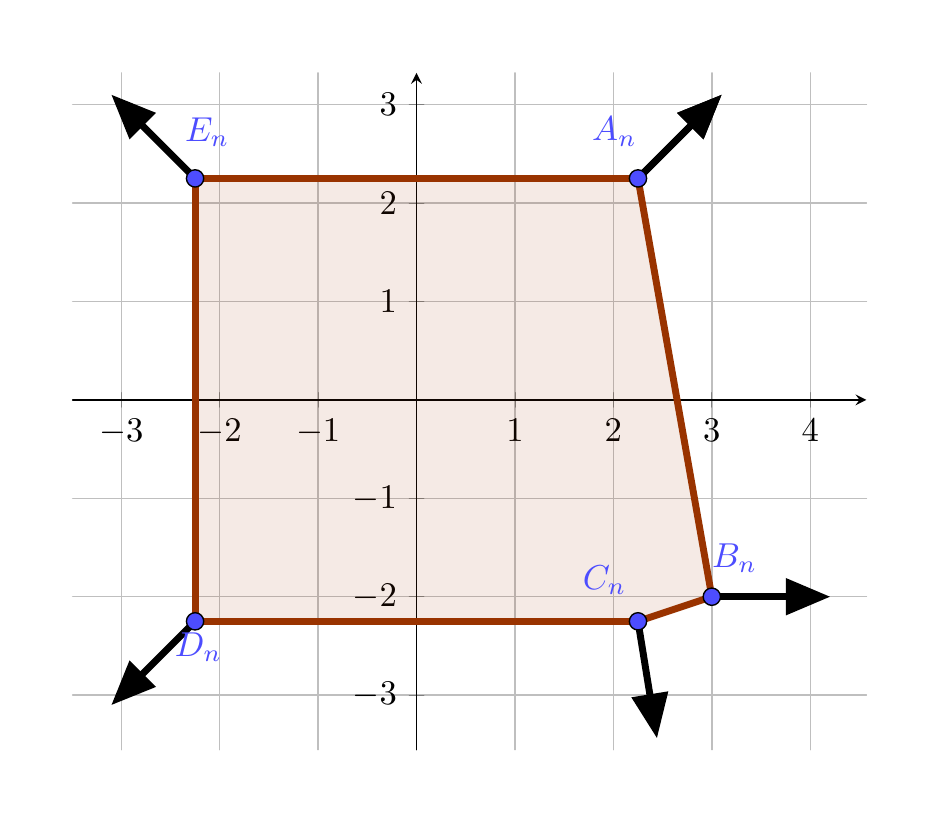}
				\caption{Illustration of the alternating minimization (resp. exact line search) example: on the left, the sublevel sets in gray and the corresponding alternating minimization (resp. exact line search) sequence in dashed lines. On the right the  interpolating polygons together with their normal vectors as in Lemma \ref{lem:polyGon}.}
				\label{fig:illustrAltMin}
\end{figure}
The following polygonal construction is illustrated in Figure \ref{fig:illustrAltMin}. 
For any $n\geq2$ in $\NN$, we set
\begin{align*}
				A_n &= \left(\frac{1}{4} + \frac{1}{n},   \frac{1}{4} + \frac{1}{n}\right)\\
				B_n &= \left( \frac{1}{4} + \frac{1}{2(
			n-1)} + \frac{1}{2n}, \frac{1}{4} + \frac{1}{2n} + \frac{1}{2(n+1)} \right)\\
				C_n &= \left(\frac{1}{4} + \frac{1}{n},  - \frac{1}{4} - \frac{1}{n}\right)\\
				D_n &= \left(-\frac{1}{4} - \frac{1}{n},  - \frac{1}{4} - \frac{1}{n}\right)\\
				E_n &= \left(-\frac{1}{4} - \frac{1}{n},  + \frac{1}{4} + \frac{1}{n}\right).
\end{align*}
This defines a convex polygon. We may choose the normals at $A_n, C_n, D_n, E_n$ to be bisectors of the corresponding corners and the normal at $B_n$ to be horizontal (see Figure \ref{fig:illustrAltMin}). Rotating by an angle of $-\frac{n\pi}{2}$ and repeating the process indefinitely, we obtain the sequence of polygons depicted in Figure \ref{fig:illustrAltMin}. It can be checked that the polygons form a strictly decreasing sequence of sets, as for $n>1$, the polygon $A_nB_nC_nD_nE_n$ is contained in the interior of the square $A_{n-1}C_{n-1}D_{n-1}E_{n-1}$. This fulfills  the requirement of Corollary~\ref{cor:polygonDecrease}. 
\begin{corollary}
				There exists a $C^k$ convex function $f \colon \RR^2 \mapsto \RR$ and an initialization $x_0=(u_0,v_0)$ such that the recursion, for $i \geq 1$
				\begin{align*}
								u_{i+1} &\in \argmin_{u} f(u,v_i) \\
								v_{i+1} &\in \argmin_{v} f(u_{i+1},v)
				\end{align*}
				produces a non converging sequence $\displaystyle (x_i)_{i\in\NN}=\left((u_i,v_i)\right)_{i\in\NN}$.
				\label{cor:altMin}
\end{corollary}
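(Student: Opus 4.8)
The plan is to apply Corollary~\ref{cor:polygonDecrease} to the decreasing sequence of polygons described above (reversing the indexing so that it becomes increasing, and padding with auxiliary sets for the positive indices), obtaining a $C^k$ convex function $f$ whose sublevel sets interpolate the polygons, which agrees with the vertices $A_n,B_n,C_n,D_n,E_n$ at the prescribed level $\lambda_n$, and whose gradient at each of these vertices is colinear to the chosen normal. The key point of the construction is that the normal at $B_n$ was chosen to be \emph{horizontal}: this means $\partial f/\partial v$ vanishes at $B_n$, so that $B_n$ is a critical point of the partial map $v\mapsto f(u,v)$ with $u$ fixed at the first coordinate of $B_n$; by convexity it is in fact the argmin of that partial map. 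Similarly, after the rotation by $-\frac{n\pi}{2}$, the analogous ``$B$-type'' vertex of the $n$-th polygon is a critical point for the partial minimization in the appropriate coordinate direction.

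First I would fix the geometry carefully: track, for each $n$, the image of the five-vertex polygon under the rotation $R_n$ by angle $-\frac{n\pi}{2}$, and identify the rotated vertex $\tilde B_n:=R_n B_n$ together with its (now axis-parallel) normal. The idea is that the alternating minimization sequence, suitably initialized, visits exactly these points: from $\tilde B_n$ one performs an exact minimization in (say) the horizontal direction and lands on the next relevant point, then a minimization in the vertical direction and lands on $\tilde B_{n+1}$, and so on. I would make this precise by checking, using the explicit coordinates of $A_n,\dots,E_n$ and the fact that the normal at each vertex is the prescribed one, that the line through $\tilde B_n$ in the coordinate direction of the current partial minimization is tangent to the level set $\{f=\lambda_n\}$ at $\tilde B_n$ — this is exactly colinearity of $\nabla f(\tilde B_n)$ with the axis-parallel normal — and that the \emph{next} coordinate line (rotated by $90^\circ$) is tangent to $\{f=\lambda_{n+1}\}$ at the image point on $\partial T_{n+1}$, which is forced to be the corresponding $B$-type vertex of the $(n+1)$-st polygon by the way the normals were chosen. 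Thus the alternating algorithm, started at $\tilde B_2$ (or at a point from which the first half-step lands on $\tilde B_2$), produces the sequence $(\tilde B_n)_{n\geq 2}$.

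Next I would establish the two required properties of this sequence. Boundedness is immediate since every $\tilde B_n$ lies in the initial square $T_0$ (all polygons are nested and contained in, say, $[-1,1]^2$). Non-convergence follows because the rotation $R_n$ by $-\frac{n\pi}{2}$ makes the points $\tilde B_n$ cycle through four ``quadrant positions'': as $n\to\infty$ the radii $\frac14+O(1/n)$ converge but the angular position keeps jumping by $\frac\pi2$, so $(\tilde B_n)$ has (at least) four distinct accumulation points, namely the four points obtained from $\lim_n B_n=(\frac14,\frac14)$ by the rotations through $0,\frac\pi2,\pi,\frac{3\pi}2$. Hence the sequence does not converge. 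I would also note that by Corollary~\ref{cor:polygonDecrease}(iv), since $\bigcap_i T_i$ is the single point $(\frac14,\frac14)$ under no rotation — actually the nested intersection here is a square $[-\frac14,\frac14]^2$, so $\argmin f$ is that square and $\nabla^2 f$ is positive definite off it; if a positive-definite-everywhere statement is wanted one shrinks the construction so the intersection is a single point, which only requires replacing the ``$\frac14$'' offsets by vanishing ones.

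The main obstacle I anticipate is the bookkeeping in the second paragraph: verifying that the exact partial minimization steps genuinely connect consecutive $\tilde B_n$'s. This requires (a) that at $\tilde B_n$ the relevant coordinate line meets $\partial T_n$ only at $\tilde B_n$ (so the argmin over that line, which lies on the highest sublevel set touched, is attained exactly there — true because the level sets are strictly convex off $\argmin f$ and the line is tangent), and (b) that moving along the next coordinate direction from $\tilde B_n$ the first level set reached, tangentially, is $\{f=\lambda_{n+1}\}$ at precisely the $B$-type vertex of $T_{n+1}$ — this is a geometric claim about the polygons that must be read off from the coordinates of $A_n,\dots,E_n$ and the nesting $T_{n+1}\subset\inte T_n$, together with the prescribed (bisector/horizontal) normals. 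Everything else is routine: the existence of $f$ is handed to us by Corollary~\ref{cor:polygonDecrease}, and boundedness and multiplicity of limit points of $(\tilde B_n)$ are elementary.
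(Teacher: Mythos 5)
Your proposal is correct and follows essentially the same route as the paper: apply Corollary \ref{cor:polygonDecrease} (via Theorem \ref{th:smoothinterp} with $I=\ZZ$ after reversing indices and padding) to the rotated polygon sequence, initialize at $B_2$, and use the prescribed horizontal normal at the $B$-vertices plus strict convexity of the level lines (nonvanishing curvature) to force each exact partial minimization to land on the next rotated $B$-vertex, whose quarter-turn cycling around the limiting square precludes convergence. The only caveat is a small bookkeeping slip: each \emph{half}-step connects $\tilde B_n$ to $\tilde B_{n+1}$ (the coordinate coincidence being that the second coordinate of $B_n$ equals the first coordinate of $B_{n+1}$), so the full iterates $(u_i,v_i)$ are every other $B$-vertex and have two accumulation points rather than four — which still gives non-convergence exactly as in the paper.
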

\begin{proof}
 We apply  Corollary~\ref{cor:polygonDecrease} to the proposed decreasing sequence and by choosing $(u_0,v_0) = B_2$ for example. This requires to shift indices (start with $i = 2$) and use Theorem \ref{th:smoothinterp} with $I = \ZZ$. Note that the optimality condition for partial minimization and the fact that level sets have  nonvanishing curvature ensure that the partial minima are unique. 
\end{proof}

In the nonsmooth convex case cyclic minimization is known to fail to provide the infimum value, see e.g., \cite[p. 94]{auslender}.  Smoothness is sufficient for establishing value convergence (see e.g. \cite{beck2013convergence,wright2015coordinate} and references therein), whether it is enough or not for obtaining convergence was an open question. Our counterexample closes this question and shows that cyclic minimization does not  yield converging sequences even for $C^k$ convex functions. This result also closes the question for the more general nonconvex case for which we are not aware of a nontrivial  counterexample for convergence of alternating minimization. Let us mention however Powell's example \cite{powell1973search} which shows that cyclic minimization with three blocks does not converge for smooth functions. 

It would also be interesting to understand how our result may impact dual methods and counterexamples in that field, as for instance the recent three blocks counterexample in \cite{chen}.

\subsection{Gradient descent with exact line search may not converge}

Gradient descent with exact line search is governed by the recursion: $$x^+\in \argmin \left\{f(y):y=x-t\nabla f(x),\,t\in\RR\right\},$$
where $x$ is a point in the plane.

Observe that the step coincides with partial minimization when the gradient $\nabla f(x)$ is colinear to one of the axis of the canonical basis. From the previous section, we thus deduce the following.
\begin{corollary}[Failure of gradient descent with exact line search]
				There exists a $C^k$ convex function $f \colon \RR^2 \mapsto \RR$ and an initialization $z_0$ in the plane  such that the recursion, for $i \geq 1$
				\begin{align*}
	x_{i+1} \in \argmin \left\{f(y): y=x_i-t\nabla f(x_i),\, t\in\R\right\}
		\end{align*}
				produces a well defined  non converging sequence $\left(x_i \right)_{i \in \NN}$.
				\label{cor:exactLineSearch}
\end{corollary}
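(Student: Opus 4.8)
The plan is to reduce gradient descent with exact line search to the alternating minimization setting of Corollary \ref{cor:altMin}, exploiting the simple observation already made before the statement: if $\nabla f(x_i)$ is parallel to one of the canonical basis vectors, then the line $\{x_i - t\nabla f(x_i) : t\in\R\}$ is exactly the horizontal or vertical line through $x_i$, so the exact line search step coincides with a one-block partial minimization. Thus, if we can arrange that every iterate $x_i$ produced by alternating minimization on the function $f$ of Corollary \ref{cor:altMin} is a point at which $\nabla f$ is axis-parallel, the two recursions generate the same sequence, and nonconvergence of the former is inherited by the latter.

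First I would recall the polygonal construction preceding Corollary \ref{cor:altMin}: the interpolant $f$ is built from the nested polygons $A_nB_nC_nD_nE_n$ (rotated successively by $-n\pi/2$) via Corollary \ref{cor:polygonDecrease}, which guarantees that at each vertex $A_{i,j}$ of each polygon $T_i$ the gradient $\nabla f$ is colinear to the prescribed normal $V_{i,j}$. By the choice of normals, the normals at the ``corner'' vertices ($A_n,C_n,D_n,E_n$ and their rotated images) are the bisectors of the right-angle corners, hence axis-parallel after accounting for the $\pm\pi/2$ rotations, and the normal at $B_n$ was chosen horizontal. So at every vertex of every polygon in the sequence, $\nabla f$ is a (positive multiple of a) canonical basis vector or its opposite. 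The alternating minimization sequence of Corollary \ref{cor:altMin}, started at $x_0 = B_2$, visits precisely these vertices: the horizontal minimization from $B_n$ lands at the next relevant corner vertex, the vertical minimization from there lands at the following one, and so on, so that the whole orbit stays on vertices with axis-parallel gradient. I would spell this out by checking, using the optimality condition for the one-dimensional restrictions together with the nonvanishing curvature of the level sets (which makes each partial minimizer unique, as already noted in the proof of Corollary \ref{cor:altMin}), that each successive partial minimizer is exactly the next vertex in the cyclic list.

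Given that, the argument concludes quickly: apply Corollary \ref{cor:altMin} to obtain $f$ and $x_0=(u_0,v_0)=B_2$ and the nonconverging alternating-minimization sequence $(x_i)_{i\in\NN}$; set $z_0 := x_0$; and observe by induction that, since $\nabla f(x_i)$ is axis-parallel for every $i$, the exact-line-search update $x_{i+1}\in\argmin\{f(y):y=x_i-t\nabla f(x_i),\,t\in\R\}$ coincides with whichever of the two partial minimizations is active at step $i$ (the minimum over $t$ along the line is attained and unique because $f$ is coercive with positive definite Hessian off $\argmin f$, so the line search is well defined). Hence the exact-line-search sequence equals $(x_i)_{i\in\NN}$, which has two distinct accumulation points by Corollary \ref{cor:altMin}, and therefore does not converge.

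The main obstacle is the bookkeeping in the middle step: verifying that the alternating-minimization orbit genuinely lands on vertices at each step rather than on some interior boundary point, and that the alternation of coordinates is synchronized with the rotation pattern of the polygons so that the active gradient is always axis-aligned. This is a finite geometric verification on the explicit points $A_n,\dots,E_n$ and their rotated copies, using the chosen normals and the strict decrease $T_{n}\subset\inte T_{n-1}$; it is routine but must be done carefully, and it is where the specific numerology of the construction (the $\tfrac14+\tfrac1n$ coordinates, the horizontal normal at $B_n$) is actually used. Everything else — the equivalence of the two recursions once gradients are axis-parallel, well-posedness of the line search, and the transfer of nonconvergence — is immediate from the cited results.
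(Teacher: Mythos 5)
Your proposal is correct and is essentially the paper's own argument: the paper proves Corollary \ref{cor:exactLineSearch} precisely by observing that an exact line-search step coincides with a partial coordinate minimization whenever $\nabla f$ is colinear with a canonical basis vector, and then reusing the construction and nonconvergence of Corollary \ref{cor:altMin} with $z_0=B_2$. The only remark is that your ``main obstacle'' is lighter than you suggest: axis-parallel gradients at all iterates follow automatically from the first-order optimality of each partial minimization (plus the horizontal gradient at $B_2$), so the vertex-by-vertex bookkeeping is already contained in the proof of Corollary \ref{cor:altMin} and need not be redone here.
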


 Convergence failure for  gradient descent with exact line search is new up to our knowledge. Let us mention that despite non convergence, the constructed sequence satisfy sublinear convergence rates in function values \cite{beck2013convergence}.

\subsection{Tikhonov regularization path may have infinite length}

\begin{figure}[]
				\centering
				\includegraphics[width=.38\textwidth]{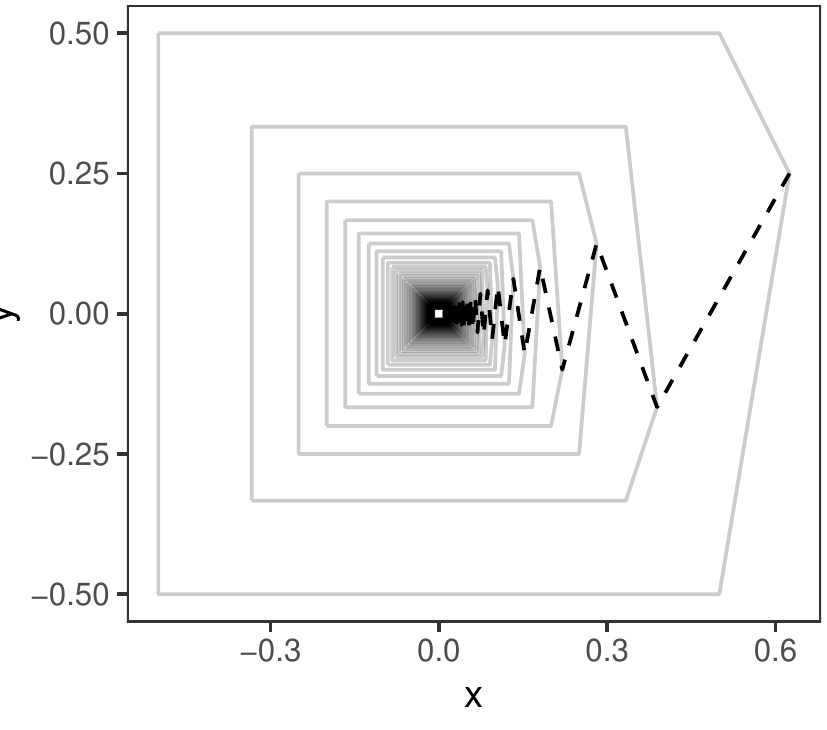}\quad \includegraphics[width=.55\textwidth]{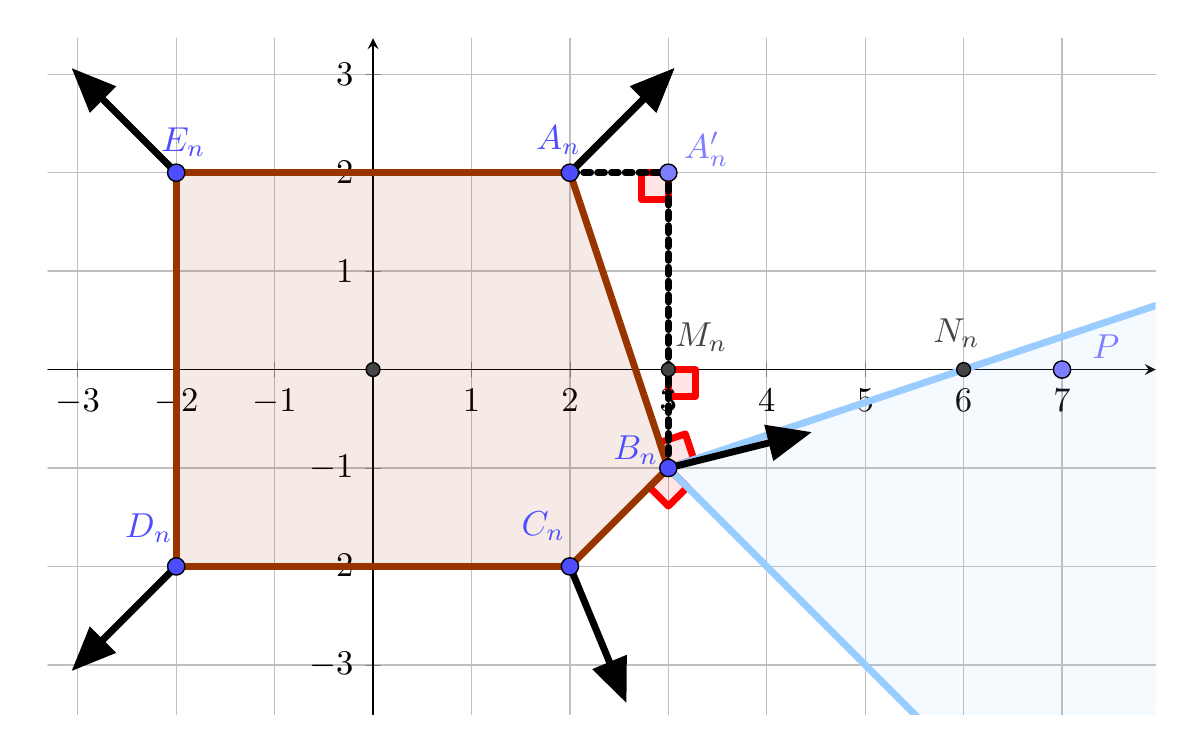}
				\caption{Illustration of the Tikhonov regularization example, on the left in gray, polygons used to build the sublevel sets of the constructed $f$  and the corresponding solutions to \eqref{eq:tikhonovRegularization} for some  values of $r$ (solutions are joined by dotted lines). On the right the normal to be chosen to apply Lemma \ref{lem:polyGon} (for $n = 1$, see main text for details). The point $P$ represents $\bar x$,  it sits on the $x$-axis and is constantly contained in the normal cone at $B_n$ for any $n \geq 1$.}
				\label{fig:regTorralba}
\end{figure}
Following \cite{torralba96}, we consider for any $r > 0$
\begin{align}
				x(r) = \argmin\left\{ f(x) + r \|x - \bar{x}\|_2^2:x\in \RR^2\right\}
				\label{eq:tikhonovRegularization}
\end{align}
where $f$ is  $C^k$ convex and where $\bar x$ is any anchor point. We would like to show that the curve $r \mapsto x(r)$ may have infinite length. Torralba provided a counterexample in his PhD Thesis for {\em continuous} convex functions, see  \cite{torralba96}. This work extends his result to  smooth $C^k$ convex functions in $\RR^2$. 

 For any $n $ in $ \NN^*$, we set
\begin{align*}
				A_n &= \left(\frac{2}{n}, \frac{2}{n}\right)\\
				B_n &= \left(\frac{2}{n} + \frac{1}{n^2}, -\frac{1}{n}  \right)\\
				C_n &= \left(\frac{2}{n},  - \frac{2}{n}\right)\\
				D_n &= \left(- \frac{2}{n}, - \frac{2}{n}\right)\\
				E_n &= \left(- \frac{2}{n}, \frac{2}{n}\right).
\end{align*}
This is depicted in Figure \ref{fig:regTorralba}. For all $n \geq 1$, denote by $M_n$ the point on the $x$ axis above $B_n$ and $N_n$, the intersection of the normal cone at $B_n$ and the $x$ axis. We have
\begin{align*}
    \frac{M_nN_n}{M_nB_n} = n \times M_nN_n= \frac{A'_nB_n}{A_nA'_n} = \frac{3/n}{1/n^2} = 3 n,
\end{align*}
so that	for all $n \geq 1$, $M_nN_n = 3$ and $N_n = (3 + 2/n + 1 / n^2, 0)$. Choosing $P = (7,0)$, since for $n \geq 1$, $3 + 2/n + 1 / n^2 \leq 6 < 7$ , this shows that $P$ constantly belongs to the interior of the normal cone at $B_n$ for all $n \geq 1$. The sequence of level sets is constructed as in Figure~\ref{fig:regTorralba} by considering alternating symmetries with respect to the $x$-axis of the sequence of polygons above. It can be checked that the polygons form a strictly decreasing sequence of sets, as for $n>1$, the polygon $A_nB_nC_nD_nE_n$ is contained in the interior of the square $A_{n-1}C_{n-1}D_{n-1}E_{n-1}$.  We choose the normal at $A_n, C_n, D_n, E_n$ to belong to the bisector at the corner and the normal at $B_n$ to be proportional to the vector $B_n P$. Applying Corollary \ref{cor:polygonDecrease}, we construct $f$ and choose $\bar{x} = P$ in \eqref{eq:tikhonovRegularization} to obtain the following:

\begin{corollary}[A bounded infinite length Tikhonov path]
				There exists a $C^k$ strictly convex function $f \colon \RR^2 \mapsto \RR$ and $\bar{x} \in \RR^2$ such that the curve $x((0,1))$ given by \eqref{eq:tikhonovRegularization} has infinite length.
				\label{cor:homotopy}
\end{corollary}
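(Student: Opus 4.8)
The plan is to feed the decreasing polygon sequence described above into Corollary~\ref{cor:polygonDecrease}, taking the prescribed normals (bisectors at $A_n,C_n,D_n,E_n$, and at $B_n$ the direction of $B_nP$ with $P=(7,0)$), and to add a few artificial large exterior polygons so that $\T=\RR^2$. This produces a $C^k$ convex $f\colon\RR^2\to\RR$ with $\argmin f=\bigcap_n T_n=\{0\}$, hence $\nabla^2 f$ is positive definite everywhere and, by Remark~\ref{rem:strictConvexity}, $f$ is strictly convex; we normalise so that $\inf f=0$. The polygons are sublevel sets $\{f\le\lambda_n\}$ with $\lambda_n$ strictly decreasing to $0$, and at the $B$-type vertex $B_n^{\pm}$ of the $n$-th polygon (the sign recording on which side of the $x$-axis it lies after the alternating reflections) Corollary~\ref{cor:polygonDecrease}(iii) gives $\nabla f(B_n^{\pm})=c_n\,(P-B_n^{\pm})$ with $c_n>0$. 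We then take $\bar x=P$ in \eqref{eq:tikhonovRegularization}.

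First I would use the first-order optimality condition for the (unique, by strict convexity and coercivity) minimiser in \eqref{eq:tikhonovRegularization}, namely $\nabla f(x(r))+2r\,(x(r)-P)=0$. Evaluating the left-hand side at $x=B_n^{\pm}$ yields $\nabla f(B_n^{\pm})+2r\,(B_n^{\pm}-P)=(c_n-2r)\,(P-B_n^{\pm})$, which vanishes precisely when $r=c_n/2$; hence $x(c_n/2)=B_n^{\pm}$. Since $B_n^{\pm}\to 0=\argmin f$ we have $c_n=\|\nabla f(B_n^{\pm})\|/\|P-B_n^{\pm}\|\to 0$, so $c_n/2\in(0,1)$ for all $n\ge N$ with $N$ large enough, and each such $B_n^{\pm}$ lies on the curve $x((0,1))$. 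The curve is also bounded: the elementary two-point comparison of the objective at $x(r_1)$ and $x(r_2)$ shows $r\mapsto\|x(r)-P\|$ is non-increasing and $r\mapsto f(x(r))$ is non-decreasing, so $\|x(r)-P\|\le\lim_{s\downarrow 0}\|x(s)-P\|=\|P\|=7$ for $r\in(0,1)$.

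Next I would fix the order of visit. Since $\lambda_n$ is strictly decreasing and $r\mapsto f(x(r))$ is non-decreasing, the equalities $f(x(c_n/2))=f(B_n^{\pm})=\lambda_n$ force $c_n/2$ to be strictly decreasing in $n$; thus, as $r$ decreases, the path passes through $B_N^{\pm},B_{N+1}^{\pm},B_{N+2}^{\pm},\dots$ in this order. Consequently the length of $x((0,1))$ is at least $\sum_{n\ge N}\|B_{n+1}^{\mp}-B_n^{\pm}\|$. By construction the $y$-coordinate of $B_n^{\pm}$ is $\pm\tfrac1n$ with the sign alternating with the parity of $n$ (because of the alternating symmetries about the $x$-axis), so consecutive contact points lie on opposite sides of the $x$-axis and $\|B_{n+1}^{\mp}-B_n^{\pm}\|\ge\tfrac1n+\tfrac1{n+1}\ge\tfrac1n$. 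Since $\sum_{n\ge N}\tfrac1n=+\infty$, the curve $x((0,1))$ has infinite length, which is the claim.

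The only non-routine point is the middle step: one must ensure the path genuinely threads through each $B$-type vertex rather than merely approaching it, and this is exactly why the polygonal interpolation Corollary~\ref{cor:polygonDecrease} is needed instead of the purely set-theoretic Theorem~\ref{th:smoothinterp} — it is the prescribed colinearity $\nabla f(B_n^{\pm})\parallel(P-B_n^{\pm})$, combined with the exactness of the optimality condition, that pins $x(c_n/2)$ to $B_n^{\pm}$. Everything else (boundedness, the monotonicity of $r\mapsto f(x(r))$ and $r\mapsto\|x(r)-P\|$, and the harmonic divergence) is standard; it remains only to verify, as in the discussion preceding the statement, that the chosen normals meet the interpolability angle condition, that $P$ lies in the interior of every normal cone at a $B$-vertex (the text computes $N_n=(3+2/n+1/n^2,0)$ and $3+2/n+1/n^2\le 6<7$), and that the reflected polygons still form a strictly nested sequence.
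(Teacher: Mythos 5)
Your proposal is correct and follows essentially the same route as the paper's proof: interpolate the alternating polygon sequence via Corollary~\ref{cor:polygonDecrease}, use the prescribed colinearity $\nabla f(B_n)\parallel (P-B_n)$ together with the optimality condition $\nabla f(x(r))+2r(x(r)-P)=0$ to place each $B_n$ on the path at $r=\|\nabla f(B_n)\|/(2\|B_n-P\|)$, and conclude infinite length from the non-summability of the alternating second coordinates $\pm 1/n$. You actually supply details the paper leaves implicit (monotonicity of $r\mapsto f(x(r))$ fixing the order of visits, $r_n\in(0,1)$ eventually, boundedness, strict convexity via Remark~\ref{rem:strictConvexity}); the only harmless imprecisions are that Corollary~\ref{cor:polygonDecrease} gives the vertex values and gradient directions of item (iii) rather than making the polygons exact sublevel sets, and that obtaining full domain $\RR^2$ requires adding an infinite expanding family of outer sets, not just ``a few''.
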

\begin{proof}
        We apply Corollary \ref{cor:polygonDecrease} with $I = \ZZ$ and revert the indices set to match the sequence that we have described. For any $n \geq 1$ there exists a value of $\lambda_n$ such that $f(B_n)= \lambda_n$ and $\nabla f(B_n)$ is colinear to the vector $B_n P$. Set 
        \begin{align*}
            r = \frac{\|\nabla f(B_n)\|}{2 B_nP}
        \end{align*}
        we have $\nabla f(B_n) + 2r(B_n - P) = 0$ which is the optimality condition in \eqref{eq:tikhonovRegularization} with $\bar{x} = P$. Hence we have shown that there exists a value of $r$ such that $B_n$ is the solution to \eqref{eq:tikhonovRegularization}. Since $n$ was arbitrary this is true for all $n$ and the curve $r \mapsto x(r)$ has to go through a sequence of points whose second coordinate is of the form $(-1)^n/n$ for all $n \geq 1$. Since this sequence is not absolutely summable, the curve has infinite length.
\end{proof}

This result is in contrast with the definable case for which we have finite length by the monotonicity lemma, since the whole trajectory is definable and bounded. 

\subsection{Secants of gradient curves at infinity may not converge}
\paragraph{Thom's gradient conjecture and Kurdyka-Mostowski-Parusinski's theorem} A theorem of \L ojasiewicz \cite{lojasiewicz1984trajectoires} asserts that bounded solutions  to the gradient system
\begin{align*}
    \dot{x}(t) = - \nabla f(x(t))
\end{align*}
converge when $f$ is a real analytic potential. Thom conjectured in \cite{thom1989problemes} that this convergence should occur in a stronger form:  trajectories converging to a given $\bar{x}$ should admit a tangent at infinity, that is
\begin{align}\label{secants}
    \frac{x(t) - \bar{x}}{\|x(t) - \bar{x}\|}
\end{align}
should have a limit as $t \to \infty$. Lines passing through $\bar x$ and  having \eqref{secants} as a slope are called {\em secants of $x$ at $\bar x$}. This conjecture was proved to be true in \cite{kurdyka2000proof}. In the convex world, it is well known that solutions to the gradient system converge for general potentials (this is a F\'ejer monotonicity argument due to Bruck); see also the original approaches by Manselli and Pucci \cite{Manselli} and Daniilidis et al. \cite{dan}. It is then natural to wonder  whether this convergence satisfies  higher order rigidity properties as in the analytic case. The answer turns out to be negative in general yielding a quite mysterious phase portrait.

\paragraph{Absence of tangential convergence for convex potentials}

\begin{figure}[H]
				\centering
				\includegraphics[width=.6\textwidth]{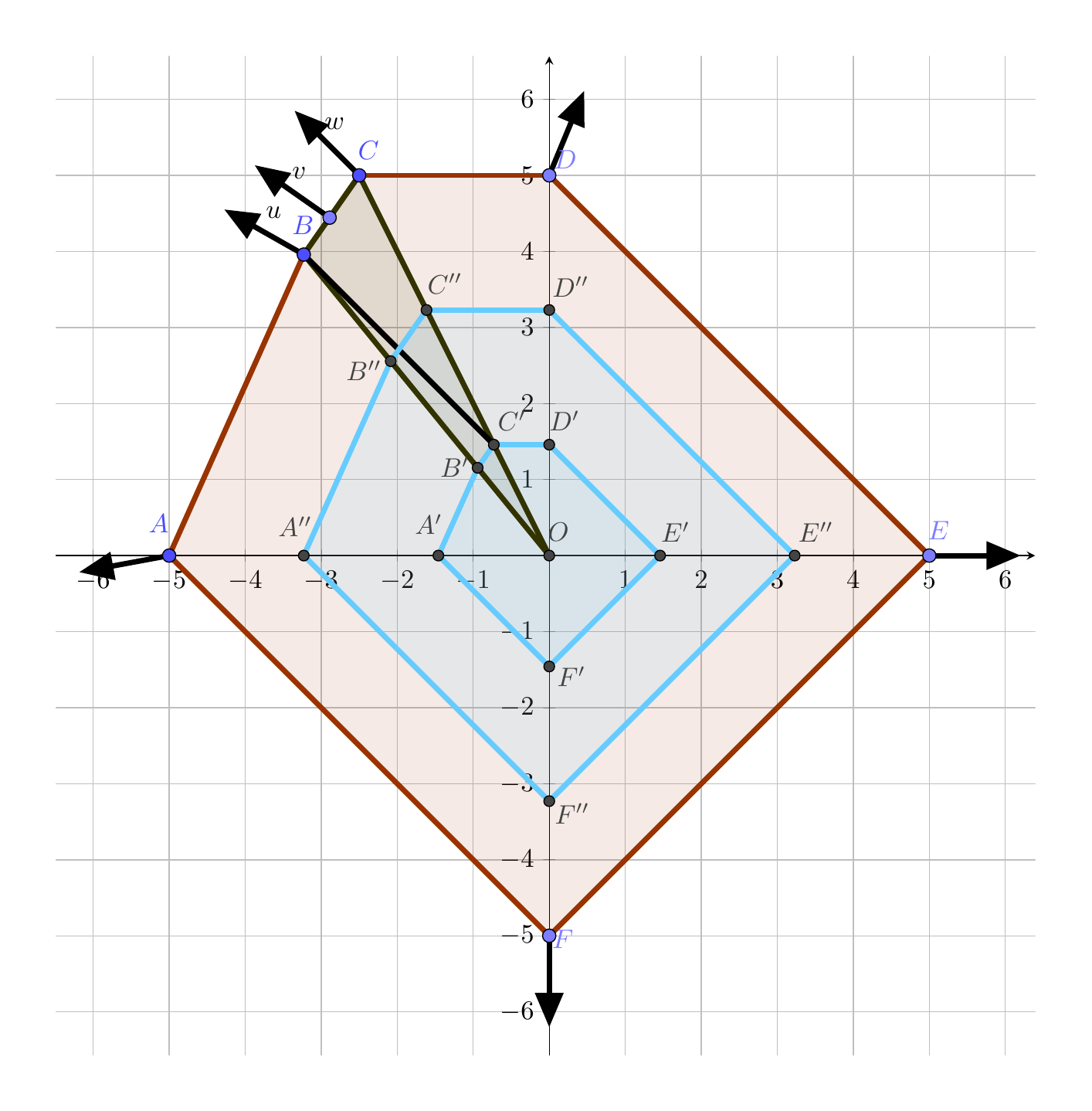}
				\caption{$A = (-5,0)$, $B=\left( - \frac{5}{3} - \frac{25}{16}, \frac{10}{3} + \frac{5}{4} \right)$, $C=\left( \frac{-5}{2},5 \right)$, $D=\left( 0,5 \right)$, $E = \left( 5,0 \right)$, $F = \left( 0, -5 \right)$. All normals are chosen to be bisectors except $w$ which is parallel to the line $(DE)$. The vector $v$ is orthogonal to the segment $[BC]$. The point $C'$ is obtained by considering the intersection between the line $(Bw)$ (starting from $B$ with direction $w$), and the segment $[OC]$. The points $A'$, $B'$, $D'$, $E'$, $F'$ are obtained by performing a scaling of $A,B,D,E,F$ of a factor $\frac{OC'}{OC}$. The polygon $A''B''C''D''E''F''$ is $ABCDEF$ scaled by a factor $\frac{OC' + OC}{2 OC}$.}
				\label{fig:thom}
\end{figure}
The construction given in this paragraph is more complex than the previous ones, we start with a technical lemma which will be the basic building block for our counterexample.
\begin{lemma}
				Let $S$ be a  convex set with $C^k$ boundary interpolating $ABCDEF$ in Figure \ref{fig:thom} and let $g$  be the gauge function associated to $S$. The function $g$ is differentiable outside the origin. Consider any initialization $x_0 $ in $ [BC]$ with corresponding trajectory solution to the  equation
				\begin{align*}
								\dot{x}(t) &= - \nabla g(x(t)), \, t\geq 0,\\
								x(0) &= x_0.
				\end{align*}
				Set $\bar{t} = \sup_{x(t) \in OBC} t$, we have $\bar{t} < + \infty$ and $x(\bar{t}) $ in $ [CC']$.
				\label{lem:techThom}
\end{lemma}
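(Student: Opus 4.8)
The plan is to work entirely with the gauge $g$ of $S$ and to exploit its positive $1$-homogeneity. First I would record the elementary facts: $g$ is convex, positively $1$-homogeneous, of class $C^k$ on $\RR^2\setminus\{0\}$ (because $\bd S$ is $C^k$ with non-vanishing curvature), strictly positive off the origin, with sublevel sets $\{g\le\lambda\}=\lambda S$ for $\lambda>0$. Moreover $\nabla g$ is $0$-homogeneous, so $\nabla g(x)=\nabla g\big(x/g(x)\big)$; Euler's identity gives $\langle\nabla g(x),x\rangle=g(x)$; and $\nabla g(x)$ is a strictly positive multiple of the outward unit normal $n(u)$ to $\bd S$ at the ``foot point'' $u(x):=x/g(x)$. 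Along the flow this yields $\tfrac{d}{dt}g(x(t))=\langle\nabla g(x(t)),\dot x(t)\rangle=-\|\nabla g(x(t))\|^{2}\le 0$, and in particular the trajectory is confined, while it stays in $OBC$, to the angular sector $\Sigma$ bounded by the rays $OB$ and $OC$, with foot points on the compact arc of $\bd S$ joining $B$ to $C$.

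\textbf{Finiteness of $\bar t$.} On that arc the continuous, nowhere-vanishing field $\nabla g$ has norm bounded below by some $c>0$; by $0$-homogeneity $\|\nabla g(x)\|\ge c$ on all of $\Sigma\setminus\{0\}$. Hence, as long as $x(t)\in OBC\subset\Sigma$, one has $\tfrac{d}{dt}g(x(t))\le-c^{2}$, so $g(x(t))\le g(x_0)-c^{2}t$, which is impossible once $t>g(x_0)/c^{2}$; therefore $\bar t\le g(x_0)/c^{2}<\infty$. By continuity of $x(\cdot)$ and closedness of $OBC$, $x(\bar t)\in OBC$. A short computation (for a foot point on the arc $BC$, the normal direction lies strictly between $n_B$ and $n_C$, and $-n$ then has strictly positive pairing with the inner normal of the chord $[BC]$) shows the vector field points strictly into $OBC$ along the relative interior of $[BC]$; consequently $x(t)\in\mathrm{int}(OBC)$ for small $t>0$, and $x(\bar t)$ lies on $[OB]\cup[OC]$.

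\textbf{Localization of the exit point on $[CC']$.} Here the key remark is that on the whole ray $OB$ the foot point equals $B$, so the velocity there is collinear with $-n_B$, which by the very choice of normals for $S$ is the direction $w\parallel(DE)$ used to define $C'$; likewise on the ray $OC$ the velocity is collinear with $-n_C$. The line $(Bw)$ is thus the field line through $B$ in the sense of its tangent, and $C'=(Bw)\cap[OC]$. The plan is then to use that solutions of the autonomous system $\dot x=-\nabla g(x)$ do not cross, so the family of trajectories issued from points of $[BC]$ is totally ordered and is pinched between the two extreme trajectories, the ones issued from $B$ and from $C$; tracking these two together with the monotone rotation of $n(u)$ along the arc $BC$ (positive curvature), and checking the signs of the pairings of $-\nabla g$ with the inner normals of the three sides $[OB]$, $[BC]$, $[OC]$ and with the line $(Bw)$, one shows the trajectory stays on the prescribed side of $(Bw)$ and exits precisely through the subsegment $[CC']$ of $[OC]$ (exit through $[OB]$, or at the vertex $O$ — which would force $g=0$ — being excluded by the same sign analysis). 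This gives $x(\bar t)\in[CC']$.

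\textbf{Main obstacle.} The gauge identities and the finite-time estimate are routine. The delicate part is the last step: verifying that the trajectory cannot cross the barrier line $(Bw)$ onto the wrong side, which forces one to orient correctly the rotation of the normal along the arc $BC$ and to compute, with the explicit coordinates of $A,\dots,F$ and the prescribed normals ($w\parallel(DE)$ at $B$, bisectors at the other vertices), the precise signs of the relevant inner products on $\bd(OBC)$ and on $(Bw)$. This is exactly where the particular numerology of the figure (the positions of $B$, $C$ and the resulting $C'$) enters, and it is the computation I expect to occupy the bulk of the proof.
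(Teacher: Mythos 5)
Your preliminary steps are correct and essentially coincide with the paper's: differentiability of $g$ away from the origin, the identity $\frac{d}{dt}g(x(t))=-\|\nabla g(x(t))\|^{2}$, and a positive lower bound on $\|\nabla g\|$ give $\bar t<+\infty$ (take the lower bound on all of $\R^2\setminus\{0\}$, which holds by $0$-homogeneity and compactness of $\bd S$, rather than only on the sector, so that the estimate survives even if you have not yet excluded excursions out of $OBC$).

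The gap is in the only hard part, namely $x(\bar t)\in[CC']$, which in your sketch is reduced to ``the trajectory stays on the prescribed side of $(Bw)$'', to be obtained from signs of pairings of $-\nabla g$ with the inner normals of the sides and of the line $(Bw)$, together with a non-crossing/pinching remark. Neither ingredient suffices. Non-crossing only orders the trajectories issued from $[BC]$; it gives no information on where the extremal trajectory issued from $B$ exits, and that is precisely the statement to be proved, so the pinching begs the question. A pointwise sign condition would make $(Bw)$ a one-sided barrier only if $-\nabla g$ pointed towards the correct side at every point of that line inside the triangle; with your reading of $w$ as the tangent direction $-\nabla g(B)$ this fails: just past $B$ the radial foot point has rotated from $B$ towards $C$, so by the monotone rotation of the Gauss map the velocity $-\nabla g$ has rotated away from the direction of $(Bw)$ and acquires a component towards the $O$-side of that line; the trajectory through $B$ therefore leaves $(Bw)$ immediately, and no sign-check along the line can confine it to the $C$-side. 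The paper's argument is of a different nature and is the idea missing from your plan: there $w$ is taken to be the (frozen) gradient at $C$, one compares $x(t)$ with the auxiliary affine solution $y(t)=B-t\,\nabla g(C)$, and one proves, using monotonicity of $\nabla g$ along $[BC]$ (convexity) transported to the whole cone by $0$-homogeneity together with the decomposition of $\nabla g$ in the orthogonal frame $(C-B,v)$, that both $\langle C-B,\,x(t)-y(t)\rangle$ and $\langle -v,\,x(t)-y(t)\rangle$ are nondecreasing; it is this integral, coordinatewise domination of $x(t)$ over the comparison point $y(t)$ (not a barrier sign-check, and not the tangent line at $B$) that places $x(t)$ in $BCC'$ and forces the exit through $[CC']$, with the boundary case $x(\bar t)\in[BC]$ handled separately (it can only be $C$). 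As written, your proposal does not contain this comparison mechanism, and the route you describe instead would not go through.
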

\begin{proof}   
                The fact that $g$ is differentiable comes from the fact that its subgradient is uniquely determined by the normal cone to $S$ which has dimension one because  of the smoothness of the boundary of $S$.  Since $S$ is interpolating the polygon, we have $g(B) = g(C) = 1$. Furthermore, we have for all $t$, $\frac{d}{dt} g(x(t)) = -\|\nabla g(x(t))\|^2= -1$, thence  $\bar{t} \leq 1 - g(C')$.
				By homogeneity, for any $x \neq 0$ and $s > 0$, $\nabla g (sx) = \nabla g(x)$. For any $x $ in $ [BC]$, by convexity
				\begin{align*}
				                0 \leq \left\langle C-x,\nabla g(C) -  \nabla g(x) \right\rangle = \left\langle C-B,\nabla g(C) -  \nabla g(x)  \right\rangle \frac{\|C-x\|}{\|C-B\|},
				\end{align*}
				and therefore 
				\begin{align}
								-\left\langle C - B, \nabla g(x) \right\rangle \geq  -\left\langle C - B, \nabla g(C) \right\rangle  
								\label{eq:thomGradient1}
				\end{align}
				By homogeneity of $g$, \eqref{eq:thomGradient1} is true for any $x$ in the triangle $OCB$ (different from $0$) and thus in the triangle $C'CB$ .
				Denote by $y$ the solution to the equation
				\begin{align*}
								\dot{y} &= - \nabla g(C)\\
								y(0) &= B,
 				\end{align*}
				which integrates to $y(t) = B - t w$ for all $t$. Equation \eqref{eq:thomGradient1} ensures that for any $0\leq t \leq \bar{t}$
				\begin{align*}
				                \frac{d}{dt} (\left\langle C - B, x(t) \right\rangle) &\geq \frac{d}{dt} (\left\langle C - B, y(t) \right\rangle)
				\end{align*}
				Hence, we have for any $0\leq t \leq \bar{t}$, integrating on $[0,t]$ 
				\begin{align}
				                \left\langle C - B, x(t) \right\rangle &\geq \left\langle C - B, y(t)  \right\rangle + \left\langle C - B, x_0 - B  \right\rangle \nonumber\\
				            &\geq \left\langle C - B, y(t)  \right\rangle.
                            \label{eq:thomGradient2}
				\end{align}
				Furthermore, for all $x $ in $ [BC]$, we have 
				\begin{align*}
				    1 = \|\nabla g(x)\|^2 = \frac{1}{\|C - B\|^2}\left\langle C - B, \nabla g(x) \right\rangle^2 + \left\langle v, \nabla g(x) \right\rangle^2,
				\end{align*}
				because $v$ is orthogonal to $C-B$. The first term is maximal for $x = C$ and thus the second term is minimal for $x = C$, we have thus for all $x $ in $ [BC]$
				\begin{align}
				    0 < \left\langle \nabla g(C), v \right\rangle = \left\langle - \nabla g(C), -v \right\rangle \leq \left\langle \nabla g(x), v \right\rangle =  \left\langle -\nabla g(x), -v \right\rangle   \leq 1.
				    \label{eq:thomGradient3}
				\end{align}
				Equation \eqref{eq:thomGradient3} holds for all $x$ in $OCB$ different from $O$ by homogeneity. 
				We deduce that for all $0 \leq t \leq \bar{t}$, we have
				\begin{align*}
				                \frac{d}{dt} (\left\langle -v, x(t) \right\rangle) &\geq \frac{d}{dt} (\left\langle -v, y(t) \right\rangle)
				\end{align*}
				and by integration
				\begin{align}
				            \left\langle -v, x(t) \right\rangle &\geq \left\langle -v, y(t)  \right\rangle + \left\langle -v, x_0 - B  \right\rangle \nonumber\\
				            &= \left\langle -v, y(t)  \right\rangle.
                            \label{eq:thomGradient4}
				\end{align}
				Hence, in the coordinate system $(C - B, -v)$, which is orthogonal, for all $t $ in $ [0,\bar{t}]$, $x(t)$ has larger coordinates than $y(t)$.
				
				The trajectory $y(t)$, of equation $t \mapsto B - t w$ is the line going from $B$ to $C'$. From equations \eqref{eq:thomGradient2} and \eqref{eq:thomGradient4}, we may write for all $t$ in $ [0,\bar{t}]$, $x(t) = y(t) + \alpha (t) (C-B) + \beta(t) (-v)$ where $\alpha$ and $\beta$ are positive functions. Since $y(t)$ belongs to the line $(BC')$, this shows that $x(t)$ has to be above this line for all $t \geq 0$, $t\leq \bar{t}$ and actually, $x(\bar{t}) $ in $ BCC'$. Hence at time $\bar{t}$, we have $x(\bar{t}) $ in $ [CC']$. This holds true because $x(\bar{t})$ is on the boundary of $OCB$ and on the boundary of $BCC'$. Hence either $x(\bar{t}) $ in $ [CC']$, either $x(\bar{t}) $ in $ [BC]$. Equation  \eqref{eq:thomGradient3} ensures that if $x(\bar{t}) $ in $ [BC]$ then $x(\bar{t}) = C$ which concludes the proof.
\end{proof}

\begin{corollary}[Secants of gradient curves may all fail to converge]
				There exists a $C^k$ strictly convex function on $\RR^2$ with a unique minimizer $\bar{x}$, such that any nonconstant solution to the gradient flow equation
				\begin{align*}
								\dot{x}(t) = -\nabla f(x(t))
				\end{align*}
				is such that
				\begin{align*}
								\frac{x(t) - \bar{x}}{\|x(t) - \bar{x}\|}
				\end{align*}
				does not have a limit as $t \to \infty$.\\
				The function $f$ has a positive definite Hessian  everywhere except at $0$.
				\label{cor:Thom}
\end{corollary}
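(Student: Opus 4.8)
The plan is to iterate the building block of Lemma~\ref{lem:techThom} infinitely often, at geometrically shrinking scales that are slightly rotated with respect to one another, so that the gradient curve is forced to wind around the origin infinitely many times while still converging to it (a finite-length, self-contracted spiral, cf.~\cite{dan}), which makes the secant direction $x(t)/\|x(t)\|$ oscillate forever. Concretely, using Lemma~\ref{lem:polyGon} and Remark~\ref{rem:noNormal} (with the normal at the $B$-vertex taken to be $w$), I would fix a compact convex set $S$ with $C^k$ boundary, positive curvature, interpolating the hexagon $ABCDEF$ of Figure~\ref{fig:thom}, and then build a strictly decreasing sequence $(T_i)_{i\in\ZZ}$ as follows. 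Choose a small angle $\beta>0$, smaller than the angle subtended at $0$ by the edge $[BC]$. Group the negative indices into consecutive \emph{blocks}; inside block $j$ all the $T_i$ are homothets, with centre $0$, of $S_j:=\mu_j\,R_{-j\beta}(S)$, with enough homothets in the block that the whole nested configuration appearing in Lemma~\ref{lem:techThom} fits inside it after rescaling; pass from block $j$ to block $j+1$ by the rotation $R_{-\beta}$ and a further contraction, tuned so that the exit segment $[C_jC_j']$ of block $j$ is carried into the entrance edge $[B_{j+1}C_{j+1}]$ of block $j+1$ (possible precisely because $\beta$ is smaller than the angular width of $[BC]$, so the radial direction of $C_j$ lands inside the angular range of $[B_{j+1}C_{j+1}]$). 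For the positive indices let $T_i$ grow to exhaust $\RR^2$. Then $\bigcap_i T_i=\{0\}$, $\operatorname{int}\bigcup_i T_i=\RR^2$, and each consecutive pair satisfies Assumption~\ref{ass:curvature}; Theorem~\ref{th:smoothinterp} with $I=\ZZ$ produces a $C^k$ convex $f\colon\RR^2\to\RR$ having the $T_i$ as sublevel sets, with $\argmin f=\{0\}$ and $\nabla^2 f$ positive definite on $\RR^2\setminus\{0\}$, hence strictly convex by Remark~\ref{rem:strictConvexity}; moreover, by Remark~\ref{rem:alignedLevelSets}, inside each block the sublevel sets of $f$ are homothets of $S_j$, so $f$ restricted to block $j$ is an increasing reparametrisation of the gauge $g_j$ of $S_j$.

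Next I would show that every nonconstant trajectory winds infinitely. Let $x(\cdot)$ solve $\dot x=-\nabla f(x)$, non-stationary. By convexity and uniqueness of the minimiser, $x(t)\to 0$, so $x$ eventually meets every $T_i$ and in particular traverses every block from the outside in. Since $\nabla g_j$ and $\nabla f$ are positively proportional on block $j$, the geometric image of $x$ inside block $j$ is a trajectory of $\dot x=-\nabla g_j$, to which Lemma~\ref{lem:techThom} applies once $x$ has entered the block through its edge $[B_jC_j]$. A sweeping argument shows this occurs for all large $j$: the sectors $OB_jC_j$ rotate by $-\beta$ from one block to the next, an amount smaller than their angular width, so they overlap and sweep the whole circle; the inward-moving trajectory must therefore fall into one of them, and by the matching of exit segments to entrance edges built into the construction it stays caught in the pattern thereafter. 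At each such block, $x$ leaves the sector $OB_jC_j$ on the radial segment $[C_jC_j']$, i.e.\ at the angular position of $C_j$, equal to $\mathrm{arg}(C)-j\beta$; between two consecutive such exits the angular coordinate of $x$ changes by a bounded amount (it stays inside one block's thin nest of homothets plus one transition ring). Hence any continuous lift of the angle of $x(t)/\|x(t)\|$ tends to $-\infty$.

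It then follows, since $\bar x=0$ and $x(t)\to 0$, that the secant direction $\dfrac{x(t)-\bar x}{\|x(t)-\bar x\|}=\dfrac{x(t)}{\|x(t)\|}$ has an angular lift going to $-\infty$ as $t\to\infty$, so it has no limit; as $x(\cdot)$ was an arbitrary nonconstant solution, this is the assertion, and the positive definiteness of $\nabla^2 f$ on $\RR^2\setminus\{0\}$ is part of Theorem~\ref{th:smoothinterp}(iii).

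The main obstacle is the geometric bookkeeping of the second paragraph: choosing $\beta$ and the contraction ratios so that Lemma~\ref{lem:techThom} genuinely chains (exit segment $[C_jC_j']\mapsto$ entrance edge $[B_{j+1}C_{j+1}]$), making the sweeping argument rigorous so that it captures \emph{every} trajectory regardless of its starting point, and controlling the bounded angular wiggle occurring within a block and across a transition ring. Everything else --- $C^k$ regularity, convexity, strict convexity, definiteness of the Hessian away from $0$, and $\argmin f=\{0\}$ --- is delivered directly by Theorem~\ref{th:smoothinterp}.
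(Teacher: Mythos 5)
Your construction (the hexagon of Figure~\ref{fig:thom}, its smooth interpolation, homothetic blocks, small rotations between blocks, interpolation by Theorem~\ref{th:smoothinterp} with $I=\ZZ$, Remark~\ref{rem:alignedLevelSets} to reduce the flow to a gauge flow inside each block) is essentially the paper's. But your concluding argument is genuinely different and it is exactly where the gap lies: you try to prove the much stronger statement that every trajectory is \emph{captured} by the chain of sectors and that its angular lift tends to $-\infty$. Neither the capture nor the monotone winding is justified by Lemma~\ref{lem:techThom}. First, the lemma controls a trajectory only once it crosses the chord $[B_jC_j]$ at the scale of the corresponding homothetic ring; your ``sweeping'' argument only gives that the trajectory lies in \emph{some} sector $OB_jC_j$ at some instant, which is not the same as crossing that block's entrance chord (it may already be deeper than the chord when it enters the angular range, or be in the sector during a ring of the wrong orientation). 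Second, after the controlled exit through the radial segment $[C_jC_j']$, the trajectory must still traverse the remaining homothets of block $j$ and the non-homothetic transition ring before reaching block $j+1$; in these regions the angular drift is uncontrolled in both size and sign, so nothing guarantees it reaches the chord $[B_{j+1}C_{j+1}]$, stays ``caught in the pattern'', or that the net angular change per block is negative. Hence the key claim ``any continuous lift of the angle tends to $-\infty$'' is unsupported; this is not bookkeeping but the heart of your route, and Lemma~\ref{lem:techThom} alone does not give the needed estimates.

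The paper avoids this entirely by a soft contradiction argument, which you could adopt with almost no change to your construction. Rotate by the fixed angle $2\pi/m<\alpha$ (so only finitely many orientations occur and each recurs periodically at arbitrarily small scales); the interiors of the rotated triangles cover all directions because $2\pi/m<\alpha$. If for some nonconstant trajectory the secant direction converged to $\theta$, the trajectory would eventually remain in the fixed rotated triangle whose interior contains the half-line of direction $\theta$; but that orientation's homothetic rings occur at arbitrarily small scales, and Lemma~\ref{lem:techThom} (after scaling) forces the trajectory to cross the scaled chord and then leave the triangle through the scaled $[CC']$, a contradiction. This only requires showing that no trajectory stays \emph{forever} in one fixed sector, not that trajectories wind monotonically. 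Your $\beta$-rotation construction would also support this contradiction argument (every direction lies in the interior of infinitely many sectors since $\beta<\alpha$), provided you verify that sectors of (approximately) a fixed orientation come with homothetic rings at arbitrarily small scales; but as written, the capture-and-chaining proof does not go through.
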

\begin{proof}
				We assume without loss of generality that $\bar{x} = O$ is the origin. Writting $x(t) = (r(t),\theta(t))$ in polar coordinate, we will construct a function $f$ such that each solution to the ODE produces nonconverging trajectories $\theta(t)$. 
				
				We start with an interpolating set $S_0 = ABCDE$ as in Lemma \ref{lem:techThom} and  let $S_1 = A'B'C'D'E'$  be its scaled version as described in Figure~\ref{fig:thom}. 
				
				Let $\alpha$  be the value of the angle $\widehat{BOC}$ and $m = \left\lceil \frac{2 \pi}{\alpha} \right\rceil + 1$. We have
				\begin{align*}
				    \frac{2\pi}{m} < \alpha.
				\end{align*}
				To obtain $S_{2}$, we rotate $S_0$ by an angle $2\pi / m$, we denote $S'_0$ the resulting set. We rescale $S'_0$ by a factor $\beta $ in $ (0,1)$ so  that $\beta S'_0$ lies in the interior of $S_1$. Call the resulting set $S_2$ and $S_3$ is obtained from $S_2$ exactly the same way as $S_1$ is obtained from $S_0$. We repeat the same process indefinitely to obtain a strictly decreasing sequence of $C^k$ sets. Note that for any $k $ in $ \NN$, $S_{2km}$ and $S_{2km + 1}$ are homothetic to $S_0$. 
				
				We now invoke Corollary \ref{cor:polygonDecrease} (with $I = \ZZ$ and revert the indices) to obtain a $C^k$ function $f$ with those prescribed level sets. Using Remark \ref{rem:alignedLevelSets} it turns out that the level sets of $f$ between $S_0$ and $S_1$ are simple scalings of $S_0$. Hence the gradient curves of $f$ and those of the gauge function of $S$ are the same between $S_0$ and $S_1$, up to time reparametrization. 

				Using Lemma \ref{lem:techThom} any trajectory crossing $[BC]$ in Figure \ref{fig:thom}, must also be crossing $[CC']$ and leave the triangle $BOC$ in finite time. The same statement holds after scaling the level sets and since for all $k $ in $ \NN$, $S_{2km}$ and $S_{2km+1}$ are homothetic to $S_0$, this shows that no solution stays indefinitely in the triangle $BOC$.
				
				Lemma \ref{lem:techThom} still holds after rotations and by our construction, for any triangle $T$ obtained by rotating $BOC$ by a multiple of $2\pi/m$, no trajectory stays indefinitely  within $T$. Since $2\pi / m < \alpha$, the union of these triangles $U$ contains $O$ in its interior. 
				
			Note first that any gradient curve converges to $\bar x$. Let us argue by contradiction and assume that there exists a continuous gradient curve $t \mapsto z(t)$  distinct from the stationary solution  $\bar{x}$, such that 
                \begin{align*}
                    \frac{z(t) - \bar{x}}{\|z(t) - \bar{x}\|}
                \end{align*}				
converges. This exactly means that the angle  $\theta(t)$ of the curve has a limit in $[0,2\pi)$ as $t$ goes to infinity. There is a rotation of $BOC$ by a multiple of $2\pi/m$ whose  interior intersects  the half line given by the direction $\theta$, call this triangle $T$. The directional convergence entails that there exists $t_0 \geq 0$ such that $z(t)$ belongs to $T$ for all $t \geq t_0$. Hence $z$ can not be a gradient curve. To complete the proof, we may add disks of increasing size to the list of sets to obtain a full domain function and invoke  Theorem \ref{th:smoothinterp} with $I = \ZZ$.
\end{proof}

\subsection{Newton's flow may not converge}
Given a twice differentiable convex function  $f$, we define the open set $\Omega:=\{x\in\R^2:\nabla^2f \mbox{ is invertible}\}$ and we consider maximal solutions to the differential equation
\begin{align}
     \dot{x}(t) = - \nabla^2 f(x(t))^{-1} \nabla f(x(t)),
     \label{eq:newton}
\end{align}
on $\Omega$. This is the continuous counterpart of Newton's method, it has been studied in \cite{aubin1984differential} and \cite{alvarez1998dynamical}. Let $x_0$ be in $\Omega$, there exists a unique maximal nontrivial interval $I$ containing $0$ and a unique solution $x$ to \eqref{eq:newton} on $I$ with $x(0) = x_0$. Equation \eqref{eq:newton} may be rewritten as 
\begin{align*}
    \frac{d}{dt} \nabla f(x(t)) = - \nabla f(x(t))
\end{align*}
and thus for all $t $ in $ I$, we have
\begin{align}
    \nabla f(x(t)) = e^{-t} \nabla f(x_0).
    \label{eq:newtonIntegrated}
\end{align}
If we could ensure that $I = \RR$ and $f$ has oscillating gradients close to its minimum, then \eqref{eq:newtonIntegrated} entails that the direction of the gradient is constant along the solution, which requires oscillations in space to compensate for gradient oscillations. 

\begin{corollary}[A bounded Newton's  curve that oscillates at infinity]
				For any $k\geq2$, there exists a $C^k$  convex coercive function $f \colon \RR^2 \mapsto \RR$ and an initial condition $x_0 $ in $ \RR^2$ such that the solution to \eqref{eq:newton} is bounded, defined on $\RR$ and has at least two distinct accumulation points.\label{cor:newton}
\end{corollary}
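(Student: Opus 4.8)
The plan is to reduce the statement to a planar geometric construction and then exploit the integrated form \eqref{eq:newtonIntegrated} of Newton's equation. The crucial point is that along any maximal solution of \eqref{eq:newton} the \emph{direction} of $\nabla f(x(t))$ is frozen: $\nabla f(x(t)) = e^{-t}\nabla f(x_0)$, so with $n_0 := \nabla f(x_0)/\|\nabla f(x_0)\|$, the point $x(t)$ is, for every $t$, the point of the sublevel set $\{f\le f(x(t))\}$ at which the outer normal equals $n_0$, i.e. $x(t) = \argmax\{\langle n_0,y\rangle : f(y)\le f(x(t))\}$, the maximizer being \emph{unique} because the level sets have positive curvature away from $\argmin f$ (Proposition \ref{prop:diffeoMorphism}). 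Since $f$ is coercive and $t\mapsto f(x(t))$ is strictly decreasing along the flow, $x([0,\infty))$ stays in a fixed compact sublevel set and never meets $\argmin f$ (there $\nabla f=0$, contradicting $\nabla f(x(t))\ne 0$), hence stays in $\Omega$; by standard ODE theory the solution is global in forward time, and, arranging $f$ to grow superlinearly at infinity (so that $\|\nabla f\|\to\infty$ at infinity and a finite-time blow-up would contradict the boundedness of $\|\nabla f(x(t))\|=e^{-t}\|\nabla f(x_0)\|$), it is defined on all of $\RR$. Thus everything reduces to: construct $f$ whose level-set family $\{f\le\lambda\}$, as $\lambda\downarrow\inf f$, has an \emph{oscillating} family of $n_0$-maximizers.

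I would realize this with Corollary \ref{cor:polygonDecrease} with $I=\ZZ$. Fix $n_0=(0,1)$ and let the intersection of the level sets be the horizontal segment $K=[-1,1]\times\{0\}$. By a straightforward induction (choosing all parameters small enough at each step) I build a strictly decreasing sequence of convex polygons $(T_i)_{i\ge 0}$ with $0\in\inte T_i$, $K\subset\inte T_i$ for each $i$, and $\bigcap_i T_i=K$, each $T_i$ carrying a distinguished ``peak'' vertex $B_i$ whose two adjacent edges both head strictly downward, so that $n_0$ lies in the interior of the normal cone $N_{T_i}(B_i)$; equip $B_i$ with the normal $V_i:=n_0$ and the remaining vertices with their bisector normals, so that each pair is interpolable (Definition \ref{def:interpolable}, Remark \ref{rem:noNormal}). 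The key design feature is that the $B_i$ \emph{oscillate}: $B_{2k}\approx(1-\eta_k,h_k)\to(1,0)$ while $B_{2k+1}\approx(-1+\eta_k,h_k)\to(-1,0)$, with $\eta_k,h_k\downarrow 0$. For the remaining (outer) indices one appends Euclidean balls of growing radius so that $\mathcal{T}=\RR^2$ and $f$ is coercive and superlinear, exactly as in the previous counterexamples of this section. Corollary \ref{cor:polygonDecrease} (after reverting indices) then yields a $C^k$ convex coercive $f$ and an increasing sequence $(\lambda_i)$ with $\inf_i\lambda_i>-\infty$, $\argmin f=K$, $\nabla^2 f$ positive definite off $K$, and $f(B_i)=\lambda_i$ with $\nabla f(B_i)$ colinear to $n_0$.

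To conclude, take $x_0:=B_0$, so $\nabla f(x_0)=c\,n_0$ for some $c>0$. By the first paragraph the solution of \eqref{eq:newton} is bounded, defined on $\RR$, and whenever $f(x(t))=\lambda_i$ — which happens for every $i\ge 0$, since $t\mapsto f(x(t))$ decreases continuously from $\lambda_0$ and tends to $\inf f$ — one has $x(t)=\argmax\{\langle n_0,y\rangle : f(y)\le\lambda_i\}$. But $B_i\in\{f=\lambda_i\}$ and $\nabla f(B_i)\parallel n_0$, together with positive curvature, force this maximizer to be exactly $B_i$. Hence the trajectory — a continuous curve, since the level sets and their $n_0$-maximizers vary continuously — passes through every $B_i$, $i\ge 0$; visiting neighborhoods of both $(1,0)$ and $(-1,0)$ infinitely often, it has at least these two accumulation points, so it does not converge. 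This proves the corollary.

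The main obstacle is the geometric bookkeeping behind the polygonal family: one must simultaneously guarantee (a) strict nesting with intersection \emph{exactly} $K$, (b) interpolability with $n_0$ strictly inside each normal cone $N_{T_i}(B_i)$, which pins down the local shape of $T_i$ near $B_i$, and (c) genuine oscillation, $B_i$ alternating between the two ends of $K$ while its height shrinks to $0$. These are compatible — the ``peak'' description does it — but writing an explicit nested family and checking the normal-cone and interpolability inequalities at every step, together with verifying that appending the large balls preserves coercivity, superlinearity and positive-definiteness of the Hessian off $\argmin f$, is where the actual work lies; the rest is an immediate combination of \eqref{eq:newtonIntegrated} with Corollary \ref{cor:polygonDecrease}.
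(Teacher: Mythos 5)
Your proposal is correct and follows essentially the paper's route: use \eqref{eq:newtonIntegrated} to freeze the gradient direction, then apply Corollary \ref{cor:polygonDecrease} to a nested polygon family whose distinguished vertex, carrying the prescribed normal $n_0$, oscillates between two limit locations, so that the Newton trajectory (being at each level the unique point with outer normal $n_0$) must pass through all these vertices and hence has two accumulation points. The paper realizes the same mechanism with the block-coordinate-descent pentagons reflected across the first axis (horizontal normals, square limit set) instead of your segment-limit ``peak'' family with vertical normals, and it leaves implicit the global-existence and $f(x(t))\to\inf f$ details that you spell out; these differences are only cosmetic.
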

The counterexample is sketched in Figure~\ref{fig:illustrNewton}, the construction is the same as for Corollary~\ref{cor:altMin} but instead of doing quarter rotations, we use symmetry with respect to the first axis. We can then call for Corollary~\ref{cor:polygonDecrease} to construct the function $f$ and equation \eqref{eq:newtonIntegrated} ensures that the solution interval is unbounded.

\begin{figure}[]
				\centering
				\includegraphics[width=.45\textwidth]{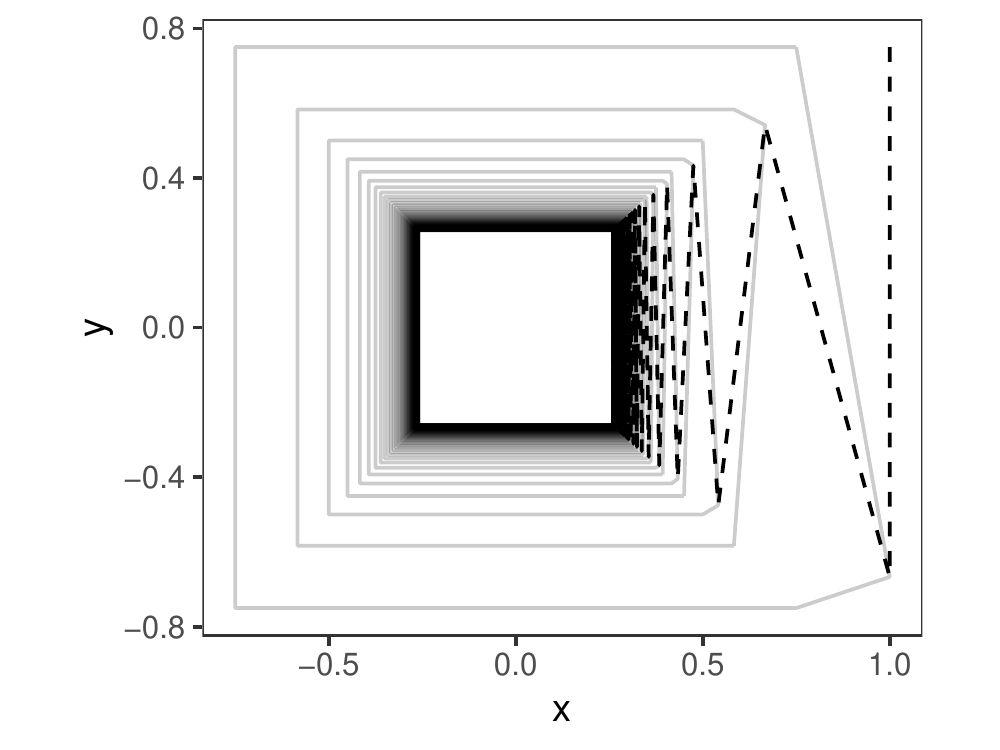}\quad \includegraphics[width=.4\textwidth]{illustrAltMin2}
				\caption{Illustration of the continuous time Newton's dynamics. On the left, the ``skeletons" of the sublevel sets in gray and a sketch of the corresponding curve. On the right,  the normals to be chosen in order to apply Lemma \ref{lem:polyGon}.}
				\label{fig:illustrNewton}
\end{figure}

\subsection{Bregman descent (mirror descent) may not converge}
The mirror descent algorithm was introduced in \cite{newmirovsky1983problem} as an efficient method to solve constrained convex problems. In \cite{beck2003mirror}, this method is shown to be equivalent to a projected subgradient method, using non-Euclidean projections. It plays an important role for some categories of constrained  optimization problem; see e.g., \cite{bauschke2016descent} for recent developments and \cite{Dragomir19} for a surprising example.

Let us recall beforehand some definitions. Given a Legendre function $h$ with domain $\dom h$, define the {\em Bregman distance}\footnote{It is actually not a proper distance.}  associated to $h$ as  $D_h(u,v)=h(u)-h(v)-\langle \nabla h(v),u-v\rangle$ where  $u$ is in $ \dom h$ and $v$ is in the interior of $\dom h$.

Given a smooth convex function $f$ that we wish to minimize on $\overline{ \dom h}$, we consider the Bregman method 
$$x_{i+1}=\argmin\left\{\langle \nabla f(x_i),u-x_i\rangle+\lambda D_h(u,x_i):u\in \R^2\right\},$$
where $x_0$ is in $\inte \dom h$ and $\lambda>0$ is a step size. When the above iteration is well defined, e.g. when $\dom h$ is bounded, it writes:
$$x_{i+1}=\nabla h^*\left(\nabla h(x_i)-\lambda \nabla f(x_i)\right).$$
In \cite{bauschke2016descent} the authors identified a generalized smoothness condition  which confers good minimizing properties to the above method:
\begin{align}
\label{hsmooth}Lh-f \mbox{ convex},\\ \lambda\in(0,L).\label{step}
\end{align}

The corollary below shows that  such an  algorithm may not converge, even though we assume the cost to satisfy \eqref{hsmooth}, the step to satisfy \eqref{step}, and the Legendre function to have a compact domain.

\begin{corollary}[Bregman or mirror descent may not converge]
				There exists  a Legendre function $h\colon D \mapsto \RR$, defined on a closed square $D$, continuous on $D$,  a vector $c$ in $ \RR^2$, and $x_0$ in $\R^2$ such that  the Bregman  descent recursion
\begin{align*}
					x_{i+1} = \nabla h^*\left( \nabla h(x_i) - c  \right),
				\end{align*}
				produces a bounded sequence $(x_i)_{i \in \NN}$ which has at least two distinct accumulation points.
				\label{cor:mirrorDescent}
\end{corollary}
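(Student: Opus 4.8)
The plan is to read the required Legendre function off Corollary~\ref{cor:legendreOscillating} and to choose the cost vector $c$ so that the Bregman iteration advances by one unit along the positive horizontal axis in the \emph{dual} variable $\nabla h(x_i)$; this forces the primal iterates $x_i$ to land alternately on two distinct rays, hence to oscillate.

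First I fix some $\theta\in\left(0,\frac{\pi}{4}\right)$ and apply Corollary~\ref{cor:legendreOscillating} to obtain a Legendre function $h\colon D\to\RR$ with $D$ a closed square, $h$ continuous on $D$ and $C^k$ on $\inte D$, such that $\nabla h^*(i,0)$ is proportional to $(\cos\theta,(-1)^i\sin\theta)$ for every $i\in\NN^*$. Since $D$ is compact, $h^*$ has full domain $\RR^2$, and $\nabla h^*\colon\RR^2\to\inte D$ is a bijection with inverse $\nabla h\colon\inte D\to\RR^2$ (Legendre duality, \cite[Chapter~6]{rockafellar1970convex}). I then set $c=(-1,0)$ and $x_0=\nabla h^*(1,0)\in\inte D$, and run the recursion $x_{i+1}=\nabla h^*(\nabla h(x_i)-c)$. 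Applying $\nabla h$ to both sides and using $\nabla h\circ\nabla h^*=\mathrm{id}$ gives $\nabla h(x_{i+1})=\nabla h(x_i)-c=\nabla h(x_i)+(1,0)$, so a one-line induction (base case $\nabla h(x_0)=(1,0)$) yields $\nabla h(x_i)=(i+1,0)$ for all $i\ge0$. In particular the recursion is well defined, $x_i=\nabla h^*(i+1,0)\in\inte D$, and by the property above $x_i$ is a positive multiple of $(\cos\theta,(-1)^{i+1}\sin\theta)$.

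It remains to show non-convergence. The sequence is bounded, lying in the compact square $D$. The even iterates lie on the ray $\RR_+(\cos\theta,-\sin\theta)$ and the odd iterates on $\RR_+(\cos\theta,\sin\theta)$, and for $\theta\neq0$ these rays meet only at the origin. Passing to convergent subsequences of $(x_{2i})$ and of $(x_{2i+1})$, their limits are distinct unless both equal $0$, that is unless $x_i\to0$. But $0\in\inte D$ (here $D$ is the polar of a polytope containing the origin in its interior, hence itself contains a neighbourhood of $0$), so if $x_i\to0$ then $\nabla h(x_i)\to\nabla h(0)$ by continuity of $\nabla h$ on $\inte D$, contradicting $\nabla h(x_i)=(i+1,0)\to\infty$. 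Hence $(x_i)$ has at least two distinct accumulation points. Finally, the iteration is precisely the Bregman/mirror step for the linear cost $f=\langle c,\cdot\rangle$ with step $\lambda=1$; for every $L>0$ the function $Lh-f$ is convex (convexity of $h$ survives subtracting a linear term), so \eqref{hsmooth}--\eqref{step} hold with $L=2$ and $\lambda=1\in(0,L)$, while $h$ has compact polygonal domain by Corollary~\ref{cor:legendreContinuity}.

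The only delicate ingredient is Corollary~\ref{cor:legendreOscillating} itself, which is already available: one must match the vertex normal of the nested $\ell^1$-polygons $T_i=3iP$ at the rightmost corner with the alternating direction $(\cos\theta,(-1)^i\sin\theta)$ and propagate this through Theorem~\ref{th:Legendre} so that $\nabla h^*$ at the integer points of the positive axis returns exactly those directions. Granting that, the argument above is short and essentially formal; no estimates are needed beyond the boundedness of $D$ and the Legendre structure.
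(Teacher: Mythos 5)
Your proposal is correct and follows essentially the same route as the paper's proof: invoke Corollary~\ref{cor:legendreOscillating}, take $c=(-1,0)$ so the dual iterates $\nabla h(x_i)$ march along the positive horizontal axis, and conclude that the primal iterates alternate between the two rays $\RR_+(\cos\theta,\pm\sin\theta)$. The only (harmless) differences are your choice of $x_0=\nabla h^*(1,0)$ instead of the minimizer of $h$, and that you rule out accumulation at the origin via continuity of $\nabla h$ at $0\in\inte D$, where the paper instead notes that $\|\nabla h^*\|$ is bounded away from zero at infinity.
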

\begin{proof}
                We fix $\theta \in \left( \frac{-\pi}{4},\frac{\pi}{4} \right)$, $\theta \neq 0$, and consider $h$ constructed in Corollary \ref{cor:legendreOscillating} and choose $c = (-1,0)$. In this case  the Bregman  descent recursion writes for all $i$ in $\NN$,
				\begin{align*}
								\nabla h (x_{i+1}) - \nabla h(x_i) = -c  
				\end{align*}
				so that we actually have $\nabla h (x_{i}) - \nabla h(x_{0}) = \nabla h (x_{i}) = - i c$
				and thus  $$x_i = \nabla h^*(-ic) = \nabla h^*(i,0).$$ 
				By Corollary ~\ref{cor:legendreOscillating}, we have for all $i \in \NN$ that $\nabla h^*(i,0)$ proportional to $(\cos(\theta), (-1)^i \sin(\theta))$. Since the norm of the gradient of $h^*$ cannot vanish at infinitiy (no flat direction) and is bounded, this proves that the sequence $(x_i)_{i \in \NN}$ has at least two accumulation points which is the desired result.
\end{proof}

\subsection{Central paths of Legendre barriers may not converge}
Consider the problem
\begin{align}
				\min_{x \in D} \left\langle c, x\right\rangle
				\label{eq:LP}
\end{align}
where $D$ is a subset of $\RR^2$ and $c$. Given a Legendre function $h$ on $D$, we introduce the $h$ central path  through 
\begin{align}
				x(r) = \argmin\left\{ \left\langle c, x\right\rangle + r h(x):x\in\R^2\right\}
				\label{eq:interiorLegendre}
\end{align}
where $r>0$ is meant to tend to $0$. Central paths are one of the essential tools behind interior point methods, see e.g., \cite{NN,Aus99} and references therein.

Note that the accumulation points of $x(r)$ as $r \to 0$, have to be in the the solution set of  \eqref{eq:LP}. It is even  tempting to think that the  convergence of the path to some specific minimizer could occur, as it is the case for many barriers, see e.g. \cite{Aus99}. We have however:

\begin{corollary}[A nonconverging central path]
			There exists  a Legendre function $h\colon D \mapsto \RR$, defined on a closed square $D$, continuous on $D$,  a vector $c$ in $ \RR^2$,  such that the $h$ central path  $r \mapsto x(r)$ has two distinct accumulation points.
				\label{cor:interiorPoint}
\end{corollary}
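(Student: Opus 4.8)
The plan is to reuse the pathological Legendre function of Corollary~\ref{cor:legendreOscillating} and to notice that the $h$ central path is, up to the change of variable $r=1/t$, exactly the curve $t\mapsto\nabla h^*(t,0)$, whose oscillation at infinity has already been engineered.

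\textbf{Setup.} Fix $\theta\in\left(-\frac\pi4,\frac\pi4\right)$ with $\theta\neq0$ and let $h\colon D\to\RR$ be the Legendre function given by Corollary~\ref{cor:legendreOscillating} for this $\theta$, so that $D$ is a closed square, $h$ is continuous on $D$ and $C^k$ on $\inte D$, and $\nabla h^*(i,0)$ is proportional to $(\cos\theta,(-1)^i\sin\theta)$ for every $i\in\NNs$. Take $c=(-1,0)$. Since $h$ is a Legendre function with compact domain $D$, the function $x\mapsto\langle c,x\rangle+rh(x)$ is again closed, essentially smooth and essentially strictly convex with domain $D$; being continuous on the compact set $D$ it attains its infimum over $\RR^2$, and by the Legendre calculus of \cite[Chapter~6]{rockafellar1970convex} this minimizer $x(r)$ is unique and lies in $\inte D$. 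Writing the optimality condition $c+r\nabla h(x(r))=0$ and using $\nabla h^*=(\nabla h)^{-1}$, we obtain for all $r>0$
\begin{align*}
x(r)=\nabla h^*\!\left(-\tfrac1r\,c\right)=\nabla h^*\!\left(\tfrac1r,0\right).
\end{align*}

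\textbf{Oscillation.} Specialising to $r=1/n$, $n\in\NNs$, gives $x(1/n)=\nabla h^*(n,0)=s_n\,(\cos\theta,(-1)^n\sin\theta)$ for some scalar $s_n$. All these points lie in the bounded set $\inte D$, so the two subsequences $(x(1/(2j)))_{j}$ and $(x(1/(2j+1)))_{j}$ each admit accumulation points, and it remains to separate these two families of limits. Since $0\in\inte D$, pick $\rho>0$ with $\rho B\subset D$, where $B$ is the closed unit Euclidean ball; then $h^*(n,0)\geq\rho n-\sup_{\rho B}h\to+\infty$, whereas $h^*(n,0)=n\,[x(1/n)]_1-h(x(1/n))$ with $h$ bounded on $D$, so $[x(1/n)]_1\geq\rho-O(1/n)$. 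Hence for $n$ large the first coordinate of $x(1/n)$ is bounded below by a positive constant, which together with $\cos\theta>0$ forces $s_n$ to be positive and bounded away from $0$. Consequently every accumulation point of $(x(1/(2j)))_j$ has the form $s^*(\cos\theta,\sin\theta)$ with $s^*>0$ — in particular its second coordinate has the sign of $\sin\theta$ — whereas every accumulation point of $(x(1/(2j+1)))_j$ has second coordinate of the opposite sign; since $\theta\neq0$, the two sets of limits are disjoint. As $1/(2j)\to0$ and $1/(2j+1)\to0$, all these points lie on the central path, which therefore has at least two distinct accumulation points as $r\to0$.

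\textbf{Where the work is.} The genuinely delicate steps are the soft ones: checking that $\langle c,\cdot\rangle+rh$ is still of Legendre type, so that $x(r)$ is well defined, single-valued and interior (pure convex-analytic book-keeping), and the lower bound on $[x(1/n)]_1$, whose sole role is to rule out the even and odd subsequential limits collapsing onto the common point $0$. Everything else is a direct transcription of the oscillation property of $\nabla h^*$ established in Corollary~\ref{cor:legendreOscillating}; note that one does not even need continuity of $r\mapsto x(r)$, since the two sequences $(x(1/(2j)))_j$ and $(x(1/(2j+1)))_j$ already witness the two accumulation points.
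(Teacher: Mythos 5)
Your proposal is correct and follows essentially the same route as the paper: write the optimality condition as $x(r)=\nabla h^*(-c/r)$ with $c=(-1,0)$ and the oscillating Legendre function of Corollary~\ref{cor:legendreOscillating}, exactly as the paper does by reducing to the construction of Corollary~\ref{cor:mirrorDescent}. Your support-function lower bound on $[x(1/n)]_1$ is just a more explicit version of the paper's remark that $\nabla h^*$ is bounded and cannot vanish at infinity, so the even and odd limits cannot collapse.
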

\begin{proof}
				The optimality condition which characterizes $x(r)$ for any $r > 0$ writes,
				\begin{align*}
								x(r)= \nabla h^*\left( \frac{c}{r} \right),
				\end{align*}
				and the construction is the same as in Corollary \ref{cor:mirrorDescent}.
\end{proof}

\subsection{Hessian Riemannian gradient dynamics may not converge}
The construction of this paragraph is similar to  the two previous paragraphs.
Consider a $C^k$ ($k\geq 2$) Legendre function $h \colon D \mapsto \RR$ and the continuous time dynamics
\begin{align}
    \dot{x}(t) = - \nabla_H f(x(t)), \, t\geq 0,
    \label{eq:hessianRiemannian}
\end{align}
where $H = \nabla^2 h$ is the Hessian of $h$ and $\nabla_H f = H^{-1}  \nabla f$ is the gradient of some differentiable function $f$ in the Riemannian metric induced by $H$ on $\inte D$. Such dynamics were considered in \cite{BT03,alvarez2004hessian}.

We have the following result:
\begin{corollary}[Nonconverging Hessian Riemannian gradient dynamics]
			There exists  a Legendre function $h\colon D \mapsto \RR$, defined on a closed square $D$, continuous on $D$,  a vector $c$ in $ \RR^2$, and $x_0$ in $\R^2$ such that  the solution to \eqref{eq:hessianRiemannian} with $f=\langle c,\cdot\rangle$ has two distinct accumulation points.
				\label{cor:hessianRiemannian}
\end{corollary}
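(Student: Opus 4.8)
```latex
The plan is to reduce Corollary \ref{cor:hessianRiemannian} to the pathological Legendre function of Corollary \ref{cor:legendreOscillating} exactly as was done for the mirror descent and central path counterexamples, the only new ingredient being that we work in continuous rather than discrete time. First I would rewrite the Hessian-Riemannian dynamics \eqref{eq:hessianRiemannian} in the dual variable. Since $f = \langle c,\cdot\rangle$ we have $\nabla f \equiv c$, so the system becomes $\dot x(t) = -[\nabla^2 h(x(t))]^{-1} c$. Now observe that $\frac{d}{dt}\nabla h(x(t)) = \nabla^2 h(x(t)) \dot x(t) = -c$, so that along any solution $\nabla h(x(t)) = \nabla h(x_0) - t\, c$. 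Because $h$ is Legendre, $\nabla h^*$ is the inverse of $\nabla h$ and is a diffeomorphism from $\inte D^*$ (where $D^* = \dom h^*$) onto $\inte D$; hence the solution is explicitly
\begin{align*}
    x(t) = \nabla h^*\!\left( \nabla h(x_0) - t\, c \right),
\end{align*}
and it is well defined and remains in $\inte D$ for all $t \geq 0$ as long as $\nabla h(x_0) - t c$ stays in $\inte D^*$ for all $t\geq 0$.

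Next I would pick the data. Take $\theta \in \left( -\frac{\pi}{4}, \frac{\pi}{4}\right)$, $\theta \neq 0$, let $h$ be the Legendre function of Corollary \ref{cor:legendreOscillating} with domain the closed square $D$, set $c = (-1,0)$, and choose $x_0$ so that $\nabla h(x_0) = (0,0)$ (this is legitimate: $0$ lies in the interior of $\dom h^*$ by construction, so $x_0 := \nabla h^*(0,0)$ is an interior point of $D$). Then $\nabla h(x_0) - t c = (t,0)$, which stays in $\inte D^*$ for all $t \geq 0$ since $\dom h^*$ is a full square containing the positive horizontal axis in its interior (recall from Corollary \ref{cor:legendreContinuity} / Theorem \ref{th:Legendre} that after the final rescaling the domain is a square and the relevant half-line $\{(t,0):t\geq 0\}$ lies in its interior). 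Consequently the solution is defined on all of $[0,\infty)$, stays bounded in $D$, and satisfies $x(t) = \nabla h^*(t,0)$.

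Finally I would extract the oscillation. By Corollary \ref{cor:legendreOscillating}, for every $i \in \NN^*$ the vector $\nabla h^*(i,0)$ is proportional to $(\cos\theta, (-1)^i \sin\theta)$; since $h^*$ is Legendre on a compact domain its gradient has no flat direction and is bounded, so $\|\nabla h^*(i,0)\|$ is bounded away from $0$ and from $+\infty$. Passing to subsequences along even and odd indices, the sequence $x(2i)$ accumulates on the ray $\RR_+(\cos\theta,\sin\theta)$ scaled into $D$, while $x(2i+1)$ accumulates on the ray $\RR_+(\cos\theta,-\sin\theta)$; as $\theta \neq 0$ these two limit sets are distinct, so $x(\cdot)$ has at least two distinct accumulation points, which is the claim. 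I expect the only genuinely delicate point to be the global existence check — namely verifying that the dual trajectory $t\mapsto \nabla h(x_0) - tc$ never exits $\inte \dom h^*$ — but this is precisely guaranteed by the compact polygonal (square) domain of $h^*$ established in Corollary \ref{cor:legendreContinuity} together with the explicit placement of the oscillating corner on the horizontal axis in the proof of Corollary \ref{cor:legendreOscillating}; everything else is the same bookkeeping already carried out for Corollaries \ref{cor:mirrorDescent} and \ref{cor:interiorPoint}.
```
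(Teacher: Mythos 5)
Your proposal is correct and follows essentially the same route as the paper: rewrite \eqref{eq:hessianRiemannian} as $\frac{d}{dt}\nabla h(x(t)) = -\nabla f(x(t)) = -c$, integrate to get $\nabla h(x(t)) = \nabla h(x_0)+(t,0)$ with $c=(-1,0)$, and then reuse the construction of Corollary \ref{cor:mirrorDescent} built on the oscillating Legendre function of Corollary \ref{cor:legendreOscillating}. One small slip worth fixing: you have swapped the roles of $h$ and $h^*$ in the justification --- it is $h$ that has the compact square domain $D$ while $\dom h^* = \RR^2$ (the globally Lipschitz interpolant of Theorem \ref{th:Legendre}), so the dual ray $\{(t,0): t\ge 0\}$ trivially stays in $\inte \dom h^*$ and the boundedness of $\nabla h^*$ comes from its range lying in the compact set $D$; as written, ``a full square containing the positive horizontal axis in its interior'' is contradictory, but the conclusion you need is still true.
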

\begin{proof}
				Equation \eqref{eq:hessianRiemannian} may be rewritten
				\begin{align*}
								\frac{d}{dt} \nabla h(x(t))= - \nabla f(x(t)),
				\end{align*}
				so choosing $c = (-1,0)$, we have for all $t \in \RR$, $\nabla h(x(t)) = \nabla h(x(0)) + (t,0) = (t,0)$ and the construction is the same as in Corollary \ref{cor:mirrorDescent}.
\end{proof}

\section{Appendix}

\begin{lemma}[Smooth concave interpolation: in between square root and affine]
				There exists a $C^\infty$ strictly increasing concave function $\phi\colon [0,1] \mapsto [0,1]$ such that 
				\begin{align*}
								\phi(t) &= \sqrt{2t/3} \quad \forall t \leq 1/6\\
								\phi(1) &= 1 \\
								\phi'(1) &= 2/3\\
								\phi^{(m)}(1) &= 0, \quad \forall m \geq 2
				\end{align*}
				\label{lem:interpolationAroundZero}
\end{lemma}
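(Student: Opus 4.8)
The plan is to reduce the statement to the construction of $\phi'$, and then to build $\phi'$ by a soft intermediate value argument, matching the required integral only at the end.

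First I would fix the shape. Put $\psi(t):=\frac{d}{dt}\sqrt{2t/3}=(6t)^{-1/2}$ and look for $\phi$ with $\phi(t)=\sqrt{2t/3}$ on $[0,1/6]$ and $\phi(t)=\frac13+\int_{1/6}^{t}\rho(s)\,ds$ on $[1/6,1]$, for a $\rho\in C^\infty([1/6,1])$ to be chosen; since $\sqrt{2(1/6)/3}=\frac13$ the two pieces agree at $t=1/6$. Then $\phi$ is $C^\infty$ across $t=1/6$ precisely when $\rho$ and all its derivatives agree at $1/6$ with $\psi$ and its derivatives, and the remaining requirements on $\phi$ become: $\rho>0$ on $[1/6,1]$ (strict monotonicity of $\phi$), $\rho$ non-increasing (concavity of $\phi$), $\rho\equiv 2/3$ to infinite order at $t=1$ (so $\phi'(1)=2/3$ and $\phi^{(m)}(1)=0$ for $m\ge2$), and $\int_{1/6}^{1}\rho=2/3$ (so $\phi(1)=1$). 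With such a $\rho$, $\phi$ is $C^\infty$ on $(0,1]$, equals $\sqrt{2t/3}$ on $[0,1/6]$, and maps $[0,1]$ onto $[0,1]$ automatically since $\phi(0)=0<\phi(1)=1$ and $\phi$ is increasing. So the statement reduces to producing such a $\rho$.

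Next I would exhibit a one-parameter family of candidates. Fix a small $\eta>0$ and, on each transition window below, a $C^\infty$ non-increasing cutoff $u$ equal to $1$ near its left end and $0$ near its right end. For a level $\ell\in[\,2/3,\ \psi(1/6+2\eta)\,]$, let $\rho_\ell$ equal $\psi$ on $[1/6,1/6+\eta]$; then $u\psi+(1-u)\ell$ on $[1/6+\eta,1/6+2\eta]$; then $\ell$ on $[1/6+2\eta,1-2\eta]$; then $\frac23+u(\ell-\frac23)$ on $[1-2\eta,1-\eta]$; then $2/3$ on $[1-\eta,1]$. Each $\rho_\ell$ is $C^\infty$ (all five junctions match to infinite order), equals $\psi$ near $1/6$ hence carries its full jet there, equals $2/3$ near $1$, and lies in $[2/3,1]$ so is positive; it is non-increasing because $\psi'<0$, because $\frac{d}{dt}(u\psi+(1-u)\ell)=u'(\psi-\ell)+u\psi'\le0$ (using $u'\le0$, $\psi'<0$ and $\psi\ge\psi(1/6+2\eta)\ge\ell$ on that window), and because $u'(\ell-\frac23)\le0$. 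It then remains to match the integral: $\ell\mapsto\int_{1/6}^{1}\rho_\ell$ is affine, hence continuous; for $\eta$ small one has $\int\rho_{2/3}\le\frac23\cdot\frac56+O(\eta)<2/3$, while with $\ell_{\max}:=\psi(1/6+2\eta)$ one has $\rho_{\ell_{\max}}\ge\ell_{\max}$ on $[1/6,1-2\eta]$, so $\int\rho_{\ell_{\max}}\ge\ell_{\max}(\frac56-2\eta)$, which exceeds $2/3$ for $\eta$ small because $\ell_{\max}\to\psi(1/6)=1$. Fixing such an $\eta$, the intermediate value theorem yields $\ell^{*}$ with $\int_{1/6}^{1}\rho_{\ell^{*}}=2/3$; take $\rho:=\rho_{\ell^{*}}$ and $\phi$ as in the first step.

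The hard part is keeping $\phi'$ simultaneously $C^\infty$ (infinite-order matching at the interior junctions), non-increasing and positive. Gluing $\psi$ directly onto the constant $2/3$ breaks monotonicity past $t=3/8$, where $\psi<2/3$; and $\phi'$ cannot hover at the value $\psi(1/6+\eta)$ either, since a non-increasing smooth function that leaves a value with nonzero slope never returns to it. The device above circumvents this by letting $\phi'$ descend to, and hold, an intermediate level $\ell^{*}$ close to $1$: this makes every cutoff gluing monotone for free and at the same time supplies enough mass to reach $\int_{1/6}^{1}\phi'=2/3$, the precise value being pinned by the soft argument of the previous paragraph. Everything else — existence of monotone smooth cutoffs, and the elementary facts $\psi(1/6)=1>2/3$ and $\psi(3/8)=2/3$ — is routine.
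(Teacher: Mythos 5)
Your construction is correct, but it takes a genuinely different route from the paper. You work directly on the derivative: you keep $\sqrt{2t/3}$ on $[0,1/6]$ and on $[1/6,1]$ glue $\psi(t)=(6t)^{-1/2}$ to a tunable plateau at level $\ell$ and then down to the constant $2/3$, using monotone smooth cutoffs so that each junction matches to infinite order and the glued derivative stays positive and non-increasing; the endpoint constraint $\phi(1)=1$ is then met by a shooting argument, since $\ell\mapsto\int_{1/6}^{1}\rho_\ell$ is affine and straddles $2/3$ for small $\eta$, so the intermediate value theorem pins $\ell^{*}$. The checks you give (monotonicity of $u\psi+(1-u)\ell$ via $u'\le 0$, $\psi'\le 0$, $\psi\ge\psi(1/6+2\eta)\ge\ell\ge 2/3$ on the relevant windows; the two integral estimates for small $\eta$) are all sound. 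The paper instead builds the \emph{inverse} function: it integrates a symmetrized smooth step twice to get a convex increasing $\phi_1$ that equals $3t^2/2$ near $0$ and is affine with slope $3/2$ near $1$, with the normalization $\phi_1(1)=1$ coming out of explicit integral identities (no parameter tuning), and then takes $\phi=\phi_1^{-1}$, so concavity, the square-root behaviour near $0$, and the flat jet at $1$ are automatic. What each approach buys: the paper's route avoids any shooting step and gets all the shape constraints from a single nonnegative integrand, while yours is more elementary (no inversion) at the cost of the monotone-gluing verifications and the one-parameter adjustment. Note that, like the paper's own function, yours is $C^\infty$ only on $(0,1]$ (the square-root cap forbids smoothness at $t=0$), which is the intended reading of the statement, so this is not a gap.
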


\begin{proof}
				Consider a $C^\infty$ function $g_0 \colon \RR \mapsto [0,1]$ such that $g_0 = 1$ on $(-\infty,-1)$, $g_0 = 0$ on $(1, +\infty)$ (for example convoluting the step function with a smooth bump function). Set $g(t) = \frac{1}{2}\left( g_0(t) + 1 - g_0(-t) \right)$ we have that $g$ is $C^\infty$,  $g = 1$ on $(-\infty,-1)$, $g = 0$ on $(1, +\infty)$ and $g(t) + g(-t) = 1$ for all $t$. We have
				\begin{align*}
								\int_{-1}^1 g(s) ds = 1\\
								\int_{-1}^1 \left( \int_{-1}^t g(s)ds \right) dt = 1
				\end{align*}
				Set $\phi_0 \colon [-3,3] \mapsto \RR$, such that
				\begin{align*}
								\phi_0(t) = \int_{-3}^t \left( \int_{-3}^r g(s) ds\right)dr.
				\end{align*}
				For all $r $ in $ [-3,3]$, we have
				\begin{align*}
								\int_{-3}^r g(s) ds = 
								\begin{cases}
												r+3 & \text{ if } r \leq -1\\
												2 + \int_{-1}^r g(s)ds & \text{ if } -1 \leq r \leq 1\\
												3 & \text{ if } r \geq 1
								\end{cases}
				\end{align*}
				and thus
				\begin{align*}
								\phi_0(t) = 
								\begin{cases}
												\frac{t^2}{2} - 9/2 + 3(t+3) & \text{ if } t \leq -1\\
												2 + 2(t + 1) + \int_{-1}^t \left( \int_{-1}^r g(s)ds \right)dr& \text{ if } -1 \leq t \leq 1\\
												6 + 3(t-1) & \text{ if } 1 \geq t
								\end{cases}
				\end{align*}
				and in particular $\phi_0(3) = 12$ and $\phi_0'(3) = 3$. Set $\phi_1(s) = \phi_0(6 s -3)/12$. 
				\begin{align*}
								\phi_1(0) &= 0\\
								\phi_1(t) &= \left(\frac{(6t-3)^2}{2} - 9/2 + 2(3t )\right)/12 =  3 t^2 / 2 =  \text{ if } t \leq 1/3\\
								\phi_1(1) &= 1\\
								\phi_1'(1) &= 3/2.
				\end{align*}
				$\phi_1$ is stricly increasing, let $\phi \colon [0,1] \mapsto [0,1]$ denote the inverse of $\phi_1$, we have 
				\begin{align*}
								\phi(1) &= 1\\
								\phi'(1) & = 2 / 3\\
								\phi(t) &=  \sqrt{2t/3} \text{ if } t\leq 1/6.
				\end{align*}
\end{proof}

\begin{lemma}[Interpolation inside a sublevel set]
				Consider any strictly increasing $C^k$ function $\phi \colon (0,2) \mapsto \RR$ such that $\phi(1) = 1$ and $\phi^{(m)}(1) = 0$, $m = 2,\ldots k$. Then the function
				\begin{align*}
								G \colon (0,2) \times \RR / 2 \pi \Z  & \mapsto \RR^2 \\
								(s,\theta) &\mapsto \phi(s) n(\theta)
				\end{align*}
				is diffeomorphism which satisfies for any $m=1 \ldots,k$ and $l =2,\ldots, k$,
				\begin{align*}
								&\frac{\partial^m G}{\partial \theta^m}(1,\theta) = n^{(m)}(\theta)\\
								&\frac{\partial^{m+1} G}{\partial \lambda\partial \theta^m} (1,\theta) = \phi'(1)n^{(m)}(\theta) \\
								&\frac{\partial^{l+m} G}{\partial \lambda^l \partial \theta^m }(\lambda_-,\theta) = 0.
				\end{align*}
				\label{lem:diffGauge2}
\end{lemma}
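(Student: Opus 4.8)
The plan is to read off everything from the product structure $G(s,\theta)=\phi(s)\,n(\theta)$, where throughout I write $s$ for the first variable (it is denoted $\lambda$ in the derivative formulae of the statement). Since $n(\theta)=(\cos\theta,\sin\theta)^T$ is $C^\infty$ with $n'(\theta)=\tau(\theta)$, and $\phi\in C^k((0,2))$, the map $G$ is $C^k$, and separating variables gives, for all $a,b\in\NN$,
\[
\frac{\partial^{a+b}G}{\partial s^a\,\partial\theta^b}(s,\theta)=\phi^{(a)}(s)\,n^{(b)}(\theta).
\]
Evaluating at $s=1$ and using $\phi(1)=1$ together with $\phi^{(l)}(1)=0$ for $2\le l\le k$, the three displayed identities follow at once: take $a=0$ for $\partial_\theta^m G(1,\theta)=n^{(m)}(\theta)$, take $a=1$ for $\partial_s\partial_\theta^m G(1,\theta)=\phi'(1)\,n^{(m)}(\theta)$, and take $a=l\ge2$ for $\partial_s^l\partial_\theta^m G(1,\theta)=0$.

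Next I would verify that $G$ is a local diffeomorphism by computing its Jacobian in the moving orthonormal frame $(n(\theta),\tau(\theta))$: there $\partial_s G=\phi'(s)\,n(\theta)$ and $\partial_\theta G=\phi(s)\,n'(\theta)=\phi(s)\,\tau(\theta)$, so the Jacobian is the diagonal matrix $\mathrm{diag}(\phi'(s),\phi(s))$. Its entries are strictly positive: $\phi'(s)>0$ since $\phi$ is strictly increasing, and $\phi(s)>0$ — this is the one place one uses that $\phi$ is positive on $(0,2)$, which is automatic in our applications since there $\phi$ coincides near $0$ with a square root (Lemma \ref{lem:interpolationAroundZero}). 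Hence $J_G$ is everywhere invertible and $G$ is a $C^k$ local diffeomorphism.

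Finally, for global injectivity, suppose $\phi(s)\,n(\theta)=\phi(s')\,n(\theta')$; taking Euclidean norms gives $\phi(s)=\phi(s')$, whence $s=s'$ by strict monotonicity, and then $n(\theta)=n(\theta')$ forces $\theta=\theta'$ in $\RR/2\pi\Z$. An injective $C^k$ local diffeomorphism is a $C^k$ diffeomorphism onto its (open) image, namely the annular region $\{\,r\,n(\theta): r\in\phi((0,2)),\ \theta\in\RR/2\pi\Z\,\}$. There is no genuine obstacle here; the only care needed is bookkeeping — reconciling the variable name and recording the implicit positivity of $\phi$.
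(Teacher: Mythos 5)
Your proof is correct. Note that the paper does not actually supply a proof of Lemma \ref{lem:diffGauge2}: it is stated as an immediate consequence of the product form $G(s,\theta)=\phi(s)\,n(\theta)$, and your verification — the separated mixed partials $\partial_s^a\partial_\theta^b G=\phi^{(a)}(s)\,n^{(b)}(\theta)$ evaluated at $s=1$, the Jacobian $\mathrm{diag}(\phi'(s),\phi(s))$ in the frame $(n(\theta),\tau(\theta))$, and the norm argument for global injectivity — is exactly the routine computation the authors leave implicit. You also correctly handle the two points that genuinely need care: the evaluation point written $\lambda_-$ in the last display is a typo inherited from Proposition \ref{prop:diffeoMorphism} and should be the gluing value $s=1$, the only point where the hypotheses constrain the higher derivatives of $\phi$; and strict positivity of $\phi$ on $(0,2)$, while not literally among the stated hypotheses, is needed both for invertibility of the Jacobian and for injectivity (otherwise $\phi(s)n(\theta)=\phi(s')n(\theta')$ could occur with opposite signs), and it does hold in the intended application where $\phi$ is built from Lemma \ref{lem:interpolationAroundZero}.
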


\begin{lemma}{\bf: Combinatorial Arbogast-Fa\`a di Bruno Formula (from \cite{ma2009higher}).}
				Let $g \colon \RR \mapsto \RR$ and $f \colon \RR^p \mapsto [0, +\infty)$ be $C^k$ functions. Then we have for any $m \leq k$ and any indices $i_1,\ldots,i_m \in \left\{ 1,\ldots, p \right\}$.
				\begin{align*}
								\frac{\partial^m}{\prod_{l=1}^{m}\partial x_{i_l}} g \circ f(x) = \sum_{\pi \in \PP} g^{(|\pi|)}(f(x)) \prod_{B \in \pi} \frac{\partial^{|B|} f}{\prod_{l \in B}\partial x_{i_l}}(x),
				\end{align*}
				where $\PP$ denotes all partitions of $\left\{ 1,\ldots, m \right\}$, the product is over subsets of $\left\{ 1,\ldots,m \right\}$ given by the partition $\pi$ and $|\cdot|$ denotes the number of elements of a set. We rewrite this as follows
				\begin{align*}
								\frac{\partial^m}{\prod_{l=1}^{m}\partial x_{i_l}} g \circ f(x) = \sum_{k = 1}^m\sum_{\pi \in \PP_k} g^{(k)}(f(x)) \prod_{B \in \pi} \frac{\partial^{|B|} f}{\prod_{l=1}^{m}\partial x_{i_l}}(x),
				\end{align*}
				where $\PP_k$ denotes all partitions of size $k$ of $\left\{ 1,\ldots, m \right\}$.
				\label{lem:faaDiBruno}
\end{lemma}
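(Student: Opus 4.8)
The plan is to prove the multivariate (set-partition) Arbogast--Fa\`a di Bruno formula by induction on the differentiation order $m$, matching each differentiation step with the recursive way in which partitions of $\{1,\dots,m\}$ are built up. First I would record that, since $f$ and $g$ are $C^k$, every partial derivative of order at most $k$ occurring in the formula exists and is continuous; in particular mixed partials commute, so the left-hand side does not depend on the order of the indices $i_1,\dots,i_m$ and both the product rule and the chain rule may be applied freely at each stage. This is exactly the role of the regularity hypothesis.

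The base case $m=1$ is just the chain rule $\partial_{i_1}(g\circ f)=g'(f)\,\partial_{i_1}f$: the unique partition of $\{1\}$ is $\{\{1\}\}$, which has one block, so the right-hand side reads $g^{(1)}(f)\,\partial_{i_1}f$, as required. For the inductive step I would assume the formula for order $m<k$ and differentiate the right-hand side once more in $x_{i_{m+1}}$. The derivative passes through the finite sum over $\pi\in\PP$, and for a fixed $\pi$ the product rule applied to $g^{(|\pi|)}(f)\prod_{B\in\pi}\partial^{|B|}_B f$ produces exactly two kinds of terms: differentiating the leading factor $g^{(|\pi|)}(f)$ yields, by the chain rule, $g^{(|\pi|+1)}(f)\,\partial_{i_{m+1}}f\cdot\prod_{B\in\pi}\partial^{|B|}_B f$, which is the term attached to the partition $\pi\cup\{\{m+1\}\}$ of $\{1,\dots,m+1\}$ (a fresh singleton block, $|\pi|+1$ blocks); differentiating a single factor $\partial^{|B|}_B f$ yields the term attached to the partition obtained from $\pi$ by inserting $m+1$ into the block $B$ (same number of blocks).

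The core of the argument is then the combinatorial bookkeeping: every partition $\pi'$ of $\{1,\dots,m+1\}$ arises in exactly one of these two ways --- from $\pi'\setminus\{\{m+1\}\}$ if $\{m+1\}$ is a singleton block of $\pi'$, and otherwise from the unique pair $(\pi,B)$ obtained by removing $m+1$ from the ($\ge2$)-element block containing it. Summing all the contributions therefore reproduces $\sum_{\pi'}g^{(|\pi'|)}(f)\prod_{B'\in\pi'}\partial^{|B'|}_{B'}f$ over all partitions of $\{1,\dots,m+1\}$, which closes the induction; the ``size-graded'' second form follows by grouping partitions according to their number of blocks. I expect this combinatorial matching to be the only delicate point --- one must check that each partition is counted once and only once, and with coefficient exactly $1$ (which is special to the set-partition formulation, in contrast to Bell-polynomial forms that carry multinomial weights) --- together with the entirely routine verification that the $C^k$ hypothesis makes every intermediate derivative legitimate. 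Alternatively, the statement may simply be cited from \cite{ma2009higher}.
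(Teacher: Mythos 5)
Your proof is correct, but it is worth noting that the paper itself gives no argument for this lemma at all: it is stated and used as an imported result, with the proof delegated entirely to the cited reference \cite{ma2009higher} (your closing remark ``alternatively, cite it'' is in fact exactly what the paper does). Your induction on $m$ is the standard self-contained derivation and all the essential points are in place: the base case is the chain rule; the inductive step applies the product rule to each term $g^{(|\pi|)}(f)\prod_{B\in\pi}\partial^{|B|}_B f$, producing the partition $\pi\cup\{\{m+1\}\}$ from the factor $g^{(|\pi|)}(f)$ and, for each block $B_0\in\pi$, the partition obtained by inserting $m+1$ into $B_0$; and the decisive bookkeeping --- that every partition $\pi'$ of $\{1,\ldots,m+1\}$ arises from exactly one pair (either $\pi'=\pi\cup\{\{m+1\}\}$ when $\{m+1\}$ is a singleton block, or the unique $(\pi,B_0)$ obtained by deleting $m+1$ from its block) --- is precisely what forces all coefficients to equal $1$, as you observe. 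Your use of the $C^k$ hypothesis is also the right one: since $m+1\leq k$, all intermediate derivatives of $f$ and $g$ exist, are continuous, and mixed partials commute, so the symbol $\partial^{|B_0|+1}f/\prod_{l\in B_0\cup\{m+1\}}\partial x_{i_l}$ is unambiguous. The grouping by number of blocks giving the second displayed form is immediate. In short, what the paper's approach buys is brevity (one citation); what yours buys is a self-contained and elementary verification, at the cost of the one genuinely delicate step, the bijective correspondence in the inductive step, which you have handled correctly.
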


\begin{lemma}{From \cite[Lemma 45]{bolte2010characterization}}
				Let $h $ in $ C^0\left( (0,r_0],\RR_+^* \right)$ be an increasing function. Then there exists a function $\psi $ in $ C^\infty(\RR,\RR_+)$ such that $\psi = 0$ on, $\RR_-$ and $0 < \psi(s) \leq h(s)$ for any $s$ in $(0,r_0]$ and $\psi$ is increasing on $\RR$	
				\label{lem:CinfiniteLowerBound}
\end{lemma}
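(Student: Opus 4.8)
The plan is to interpolate $h$ from below along a dyadic sequence $s_n\downarrow 0$, and then smooth the resulting staircase by a convergent series of rescaled $C^\infty$ ramps whose amplitudes are chosen small enough to both stay under $h$ and to tame the blow‑up of the derivatives at the origin.

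\textbf{Reduction.} First I would fix $s_n=r_0\,2^{-n}$ for $n\in\NN$, so that $s_0=r_0$ and $s_n\downarrow 0$, and set $b_n=h(s_{n+1})>0$; since $h$ is increasing, $(b_n)$ is non‑increasing. Since $(0,r_0]=\bigcup_{n\ge 0}[s_{n+1},s_n]$, it suffices to construct a non‑decreasing $\psi\in C^\infty(\RR,\RR_+)$ vanishing on $\RR_-$, strictly positive on $(0,+\infty)$, with $\psi(s_n)\le b_n$ for every $n$: then for $s\in[s_{n+1},s_n]$ monotonicity of $\psi$ and of $h$ give $\psi(s)\le\psi(s_n)\le b_n=h(s_{n+1})\le h(s)$, which is the desired inequality $0<\psi\le h$ on $(0,r_0]$.

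\textbf{Construction.} Let $\rho\colon\RR\to[0,1]$ be a fixed $C^\infty$ non‑decreasing function with $\rho\equiv 0$ on $(-\infty,0]$ and $\rho\equiv 1$ on $[1,+\infty)$ (for instance $\rho(t)=\beta(t)/(\beta(t)+\beta(1-t))$ with $\beta(t)=e^{-1/t}\mathbf 1_{t>0}$), so that $\rho^{(k)}$ vanishes outside the open interval $(0,1)$ for every $k\ge 1$. Then I would set
\begin{align*}
\epsilon_n=\min\bigl\{\,2^{-(n+1)}b_n,\ (s_n-s_{n+1})^{\,n}\,\bigr\}>0,\qquad \psi(s)=\sum_{n=0}^{\infty}\epsilon_n\,\rho\!\left(\frac{s-s_{n+1}}{\,s_n-s_{n+1}\,}\right).
\end{align*}
Each summand is non‑negative, non‑decreasing, and identically zero on $\RR_-$, and $\sum_n\epsilon_n\le\sum_n 2^{-(n+1)}b_n\le b_0<\infty$, so the series converges and $\psi$ is a non‑decreasing, non‑negative function vanishing on $\RR_-$.

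\textbf{Verification.} Each bullet is then routine. Evaluating at $s_n$, the $k$‑th summand equals $\epsilon_k$ when $k\ge n$ (argument $\ge 1$) and $0$ when $k<n$ (argument $\le 0$), so $\psi(s_n)=\sum_{k\ge n}\epsilon_k\le\sum_{k\ge n}2^{-(k+1)}b_k\le b_n\sum_{k\ge n}2^{-(k+1)}\le b_n$ using $b_k\le b_n$ for $k\ge n$; in particular $\psi(s_n)>0$, whence by monotonicity $\psi>0$ on $(0,+\infty)$. For smoothness away from $0$: near any $s_*>0$ choose $N$ with $s_N<s_*$; on $(s_N,+\infty)$ every summand with $n\ge N$ is the constant $\epsilon_n$, so there $\psi$ is a finite sum of $C^\infty$ functions plus a constant, hence $C^\infty$, and $\psi$ is trivially $C^\infty$ on $(-\infty,0)$. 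For smoothness at $0$: the intervals $(s_{n+1},s_n)$ are pairwise disjoint and for $k\ge 1$ the $k$‑th derivative of the $n$‑th summand is supported in $(s_{n+1},s_n)$, so for $s\in(s_{m+1},s_m)$ only the $m$‑th term contributes and $|\psi^{(k)}(s)|\le\epsilon_m(s_m-s_{m+1})^{-k}\|\rho^{(k)}\|_\infty\le(s_m-s_{m+1})^{\,m-k}\|\rho^{(k)}\|_\infty$, which tends to $0$ as $s\to 0^+$ because $s_m-s_{m+1}=r_0\,2^{-(m+1)}\to 0$; likewise $\psi(s)\le\psi(s_m)=\sum_{k\ge m}\epsilon_k\to 0$. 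Since $\psi$ is $C^\infty$ on $(0,+\infty)$, identically zero on $(-\infty,0]$, and all its right derivatives at $0$ tend to $0$, the standard one‑variable argument (mean value theorem and induction on the order) gives $\psi\in C^\infty(\RR)$ with $\psi^{(k)}(0)=0$ for all $k$.

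\textbf{Main obstacle.} The only delicate point is reconciling the two opposing constraints on the amplitudes $\epsilon_n$: they must be small enough that the tails $\sum_{k\ge n}\epsilon_k$ stay below $b_n$ — which may force very fast decay when $h$ itself is tiny near $0$ — while the $k$‑th derivative of the $n$‑th ramp has size of order $\epsilon_n(s_n-s_{n+1})^{-k}$, which explodes as $n\to\infty$ unless $\epsilon_n$ beats every geometric rate. Inserting the factor $(s_n-s_{n+1})^{n}$ in the $\min$ defining $\epsilon_n$ is precisely what makes both requirements compatible; once this is in place the rest is bookkeeping.
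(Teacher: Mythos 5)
Your construction is correct and complete. A few checks worth making explicit, all of which pass: the supports of the ramps' derivatives of order $\ge 1$ are the disjoint open intervals $(s_{n+1},s_n)$, so on $(0,\infty)$ the function is locally a finite sum of $C^\infty$ pieces plus constants, and at each junction $s_m$ all one-sided derivatives of order $\ge 1$ vanish on both sides (argument $1$ from the left, $0$ from the right); the extra factor $(s_n-s_{n+1})^{n}$ in the definition of $\epsilon_n$ does exactly what you claim, since for fixed $k$ and $m>k$ one has $|\psi^{(k)}|\le(r_0 2^{-(m+1)})^{m-k}\|\rho^{(k)}\|_\infty\to 0$ on $(s_{m+1},s_m)$, and the mean-value-theorem induction then yields $\psi\in C^\infty(\RR)$ with flat derivatives at $0$; the evaluation $\psi(s_n)=\sum_{k\ge n}\epsilon_k\le b_n\sum_{k\ge n}2^{-(k+1)}=2^{-n}b_n\le b_n$ is right (the $n$-th ramp evaluates to $\epsilon_n$ at $s_n$ since its argument is exactly $1$); and ``increasing'' in the statement must mean non-decreasing, since $\psi\equiv 0$ on $\RR_-$ rules out strict monotonicity on all of $\RR$, which is also all that the downstream use in Lemma~\ref{lem:CkSmoothing} requires.

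Note that the paper does not reprove this lemma; it is imported verbatim from \cite[Lemma 45]{bolte2010characterization}, so there is no in-paper argument to compare against. Your dyadic-ramp construction with amplitudes throttled both by the target $h$ (via $2^{-(n+1)}b_n$) and by the shrinking step size (via $(s_n-s_{n+1})^n$) is a clean self-contained proof of the cited fact.
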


\begin{lemma}[High-order smoothing near the solution set]
				Let $D \subset \RR^p$ be a nonempty compact convex set and $f \colon D \mapsto \RR$ convex, continuous on $D$ and $C^k$ on $D \setminus \argmin_{D} f$. Assume further that $\argmin_D f \subset \mathrm{int}(D)$, $k \geq 1$, with $\min_D f = 0$. Then there exists $\phi \colon \RR \mapsto \RR_+$, $C^k$, convex and increasing with positive derivative on $(0,+\infty)$, such that $\phi\circ f$ is convex and $C^k$ on $D$.
				\label{lem:CkSmoothing}
\end{lemma}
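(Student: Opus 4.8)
Write $Z=\argmin_D f$, a compact convex subset of $\inte D$, and set $r_0 = 1+\sup_D f$, so $f(D)\subset[0,r_0)$. The plan is to build $\phi$ so flat at $0$ that composing with $f$ annihilates the blow-up of the derivatives of $f$ near $Z$. To quantify that blow-up I would introduce, for $s\in(0,r_0]$,
\[
 m(s)\;=\;1+\sup\big\{\,|\partial^\alpha f(x)|\;:\;x\in D,\ f(x)\ge s,\ 1\le|\alpha|\le k\,\big\}.
\]
Since $\{f\ge s\}$ is a compact subset of $D\setminus Z$, on which $f$ is $C^k$, we have $m(s)<\infty$; moreover $m$ is non-increasing, $m\ge1$, and $|\partial^\alpha f(x)|\le m(f(x))$ for all $x\notin Z$ and $1\le|\alpha|\le k$.

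\textbf{Construction of $\phi$.} I would first choose a continuous non-decreasing positive function $h$ on $(0,r_0]$ with $h(s)\le s/m(s)^{k}$ — for instance the running average $h(s)=\frac{2}{s}\int_{s/2}^{s}\frac{t}{m(t)^{k}}\,dt$, which is continuous and non-decreasing, and $\le s/m(s)^{k}$ because $t\mapsto t/m(t)^{k}$ is non-decreasing. Applying Lemma \ref{lem:CinfiniteLowerBound} yields $\psi\in C^\infty(\RR,\RR_+)$ with $\psi=0$ on $\RR_-$, $0<\psi\le h$ on $(0,r_0]$, and $\psi$ non-decreasing. I then define $\phi\equiv0$ on $\RR_-$, let $\phi$ on $[0,r_0]$ be the $k$-fold iterated integral of $\psi$ based at $0$ (so $\phi^{(k)}=\psi$ and $\phi^{(j)}(0)=0$ for $0\le j\le k$), and extend $\phi$ beyond $r_0$ as any $C^k$ convex increasing function matching its $k$-jet at $r_0$. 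Because $\psi$ is smooth and flat at $0$, $\phi\in C^k(\RR)$; because $\psi\ge0$ and $\psi$ is non-decreasing, $\phi$ is convex; and $\phi$ is increasing with $\phi'>0$ on $(0,\infty)$ and $\phi\ge0$. Hence $\phi\circ f$ is convex on $D$ ($\phi$ convex non-decreasing, $f$ convex). An induction on $r$, using $\int_0^s u^{a}m(u)^{-k}\,du\le s^{a+1}m(s)^{-k}$ for $a\ge0$ (valid since $m$ is non-increasing), gives the key bound
\[
 0\le\phi^{(r)}(s)\le\frac{s^{\,k-r+1}}{m(s)^{k}},\qquad s\in(0,r_0],\ r=0,1,\dots,k .
\]

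\textbf{Regularity of $\phi\circ f$.} Put $g=\phi\circ f$ on $D$; it is $C^k$ on $D\setminus Z$ (composition), $C^k$ on a neighbourhood of $\bd D$ (since $Z\subset\inte D$), and $g\equiv0$ on $Z$, so $g$ is $C^k$ on $\inte D\setminus\bd Z$. For $x\in\inte D\setminus Z$ and $1\le|\alpha|\le k$, the Arbogast–Fa\`a di Bruno formula (Lemma \ref{lem:faaDiBruno}) expresses $\partial^\alpha g(x)$ as a finite sum of terms $\phi^{(r)}(f(x))\prod_{B\in\pi}\partial^{|B|}f(x)$ over partitions $\pi$ of $\{1,\dots,|\alpha|\}$ into $r$ blocks, $1\le r\le|\alpha|\le k$. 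Bounding each factor $\partial^{|B|}f(x)$ by $m(f(x))$ and using the estimate above,
\[
 |\partial^\alpha g(x)|\ \le\ C_k\sum_{r=1}^{|\alpha|}\frac{f(x)^{\,k-r+1}}{m(f(x))^{k}}\,m(f(x))^{r}\ =\ C_k\sum_{r=1}^{|\alpha|}f(x)^{\,k-r+1}\,m(f(x))^{r-k}\ \le\ C_k\sum_{r=1}^{|\alpha|}f(x)^{\,k-r+1},
\]
since $m\ge1$ and $r\le k$; each exponent $k-r+1\ge1$, so this tends to $0$ as $x\to Z$ (equivalently $f(x)\to0$). Thus each partial $\partial^\alpha g$, $|\alpha|\le k$, extends continuously to $\inte D$ by $0$ on $Z$; in particular $g$ is locally Lipschitz near $Z$. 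Since $\bd Z$ is Lebesgue-null and $g$ is $C^k$ off $\bd Z$ with its partials extending continuously across $\inte D$, a classical extension argument (integrate the continuous candidate derivatives along segments, using $g\in W^{1,\infty}_{\mathrm{loc}}$ and Rademacher) upgrades this to $g\in C^k(\inte D)$ with those derivatives; combined with the regularity near $\bd D$ and away from $Z$, $g=\phi\circ f$ is $C^k$ on $D$.

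\textbf{Where the difficulty lies.} The two delicate points are: (a) calibrating the decay of $\phi$ against $m$ — one must spend an extra power of $s$ (hence $h\le s/m(s)^{k}$, not merely $h\le1/m(s)^{k}$) so that even the "all singletons" term with $r=|\alpha|=k$ in Fa\`a di Bruno genuinely vanishes rather than only staying bounded; and (b) passing from "all partials extend continuously" to "$C^k$" across $Z$ when $Z$ may be fat, the only real work being localized on the null set $\bd Z$.
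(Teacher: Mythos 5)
Your construction of $\phi$ (the modulus $m$, the minorant $h\le s/m(s)^k$, Lemma \ref{lem:CinfiniteLowerBound}, the $k$-fold integral, and the Fa\`a di Bruno bound $|\partial^\alpha(\phi\circ f)(x)|\le C_k\sum_{r\le|\alpha|}f(x)^{k-r+1}$) is correct, and it is a genuinely different route from the paper, which instead composes $k$ successive smoothing reparametrizations, one per derivative order, each calibrated only against the next order's blow-up. The gap is in your last step. The ``classical extension argument'' you invoke — $\bd Z$ Lebesgue-null, $g\in W^{1,\infty}_{\mathrm{loc}}$, Rademacher, candidate partials extending continuously, hence $g\in C^k$ — is false under exactly those hypotheses once you go past first order. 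Take $g$ to be an antiderivative of the Cantor function and $N$ the Cantor set: $g$ is $C^1$ and Lipschitz, $C^\infty$ off the closed null set $N$, and all candidate derivatives of order $\ge 2$ extend continuously by $0$, yet $g\notin C^2$. The obstruction is that to pass from order $j$ to order $j+1$ by integrating the candidate $(j+1)$-st derivatives along segments you need the \emph{$j$-th order partials themselves} to be absolutely continuous (or locally Lipschitz) across the exceptional set, and nullity of $\bd Z$ plus continuity of the candidates gives you nothing of the sort. Your proposal only secures this for $j=0$ (via convexity/Lipschitzness of $g$), so as written the conclusion ``$g\in C^k(\inte D)$'' does not follow.

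The good news is that the gap is repairable with what you already have, but the repair must use the specific structure rather than a generic null-set lemma. Two ways: (a) difference quotients, as in the paper's proof — your bound gives $|\partial^\alpha g(x)|\le C\,f(x)^{\,k-|\alpha|+1}\le C'\dist(x,Z)^{\,k-|\alpha|+1}$ with exponent $\ge 2$ whenever $|\alpha|\le k-1$ (here you use that $f$ is Lipschitz and vanishes on $Z$), so an induction on the order shows that at every point of $Z$ the partial of order $|\alpha|+1$ exists and equals $0$, and its continuity there follows from your decay estimate off $Z$ together with its vanishing on $Z$; or (b) a segment argument exploiting convexity of $Z$: any segment meets $Z$ in a subsegment, the already-established derivatives vanish identically on $Z$ and have bounded next-order derivatives off $\bd Z$ nearby, so splitting a segment at its intersection with $Z$ yields the local Lipschitz bound (indeed an $o(\dist(\cdot,Z))$ bound via the nearest-point projection) needed to iterate your integration argument. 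Note also two cosmetic points: Lemma \ref{lem:CinfiniteLowerBound} is stated for an increasing $h$ while your running average is only non-decreasing (easily fixed), and you can avoid the ad hoc extension of $\phi$ beyond $r_0$ by simply keeping the iterated-integral formula $\phi(s)=\int_0^s\frac{(s-t)^{k-1}}{(k-1)!}\psi(t)\,dt$ for all $s$, which is automatically $C^\infty$, convex and increasing with $\phi'>0$ on $(0,\infty)$.
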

\begin{proof}
				By a simple translation, we may assume that $\min_D f = 0$ and $\max_D f = 1$.
				Any convex function is locally Lipschitz continuous on the interior of its domain so that $f$ is globally Lipschitz continuous on $D$ and its gradient is bounded. Hence, $f^2$ is $C^1$ and convex on $D$. 
				We now proceed by recursion. For any $m =1,\ldots, k$, we let $Q_m$ denote the $m$-order tensor of partial derivatives of order $m$. Fix $m$ in $\{1,\ldots,k\}$. Assume that $f$ is $C^m$ throughout  $D$ while it is $C^{m+1}$ on $D \setminus \arg\min_D f$. Note that all the derivatives up to order $m$ are bounded. We wish to prove that $f$ is globally $C^{m+1}$.

				Consider the increasing function
				\begin{align*}
								h \colon (0,1] &\mapsto \RR_+^*\\
								s &\mapsto \frac{s}{1 + \sup_{s \leq f(x) \leq 1}\|Q_{m+1}(x)\|_{\infty}}
				\end{align*}
				and set $\psi$ as in Lemma \ref{lem:CinfiniteLowerBound}. Recall that $\psi$ is $C^\infty$ and all its derivative vanish at $0$ and $\psi \leq h$ on $(0,1]$. Let $\phi$ denote the anti-derivative of $\psi$ such that $\phi(0) = 0$. $\phi$ is $C^\infty$ and convex increasing on $\RR$ and, since its  derivatives at $0$ vanish as well, one has, for any $q $ in $ \NN$, $\phi^{(q)}(z) = o(z)$. Consider the function $\phi \circ f$. It is $C^m$ on $D$ and it has bounded derivatives up to order $m$. Furthermore, it is $C^{m+1}$ on $D \setminus \argmin_D f$. Let $\bar{y} $ in $ \argmin_D f$. If $\bar{y} $ in $ \mathrm{int}(\argmin_D f)$, then $f$ and $\phi \,\circ f$ have derivatives of all order vanishing at $\bar{y}$. Assuming that $\bar{y} $ in $ \argmin_D f\setminus \mathrm{int}(\argmin_D f)$. 
				By the induction assumption and Lemma~\ref{lem:faaDiBruno}, we have for any indices $i_1,\ldots,i_m \in \left\{ 1,\ldots, p \right\}$ and any $h $ in $ \RR^p$:
				\begin{align*}
								&\frac{\partial^m}{\prod_{l=1}^{m}\partial x_{i_l}} (\phi \circ f)(\bar{y} + z) - \frac{\partial^m}{\prod_{l=1}^{m}\partial x_{i_l}} (\phi \circ f)(\bar{y}) \\
								=\;& \frac{\partial^m}{\prod_{l=1}^{m}\partial x_{i_l}}( \phi \circ f)(\bar{y} + z) \\
								=\;&\sum_{q = 1}^{m}\sum_{\pi \in \PP_q} \phi^{(q)}(f(\bar{y} + z)) \prod_{B \in \pi} \frac{\partial^{|B|} f}{\prod_{l=1}^{m}\partial x_{i_l}}(\bar{y} + z).
				\end{align*}
			 All the derivatives of $f$ are of order less or equal to $m$ and thus remain bounded as $z \to 0$. Further more $f$ is Lipschitz continuous on $D$ so that  $f(\bar{y} + z) = O(\|z\|)$ near $0$, and, for any $q $ in $ \NN$,  $\phi^{(q)}(f(\bar{y} + z)) = o(\|z\|)$. Hence $\phi \circ f$ has derivative of order $m+1$ at $\bar{y}$ and it is $0$.

				Since $\argmin_D f \subset \mathrm{int}(D)$, we may consider any sequence of point $(y_{j})_{j \in \NN}$ in $D \setminus \argmin_D f$ converging to $\bar{y}$. 
				By Lemma \ref{lem:faaDiBruno}, we have for any indices $i_1,\ldots,i_{m+1} \in \left\{ 1,\ldots, p \right\}$, and any $j $ in $ \NN$,
				\begin{align*}
								\frac{\partial^{(m+1)}}{\prod_{l=1}^{m+1}\partial x_{i_l}} (\phi \circ f)(y_j) &= \phi'(f(y_j)) \frac{\partial^{(m+1)} f}{\prod_{l=1}^{m}\partial x_{i_l}}(y_j) + \sum_{q = 2}^{m+1}\sum_{\pi \in \Pi_q} \phi^{(q)}(f(y_j)) \prod_{B \in \pi} \frac{\partial^{|B|} f}{\prod_{l=1}^{m}\partial x_{i_l}}(x)\\
								&\leq h(f(y_j))\frac{\partial^{(m+1)} f}{\prod_{l=1}^{m}\partial x_{i_l}}(y_j) + \sum_{q = 2}^{m+1}\sum_{\pi \in \Pi_q} \phi^{(q)}(f(y_j)) \prod_{B \in \pi} \frac{\partial^{|B|} f}{\prod_{l=1}^{m}\partial x_{i_l}}(x)\\
								&= f(y_j) \frac{\frac{\partial^{(m+1)} f}{\prod_{l=1}^{m}\partial x_{i_l}}(y_j)}{1 + \sup_{f(y_j) \leq f(x) \leq 1}\|Q_{m+1}(x)\|_{\infty}}  + O(f(y_j))\\
								&= O(f(y_j)),
				\end{align*}
				where  the inequality follows from the construction of $\phi$. The third step follows using the definition of $h$ and the fact that, for any $q \geq 2$, 
				\begin{enumerate}
				    \item Each partition of $\left\{ 1,\ldots,m+1 \right\}$ of size $q$ contains subsets of size at most $m$. Thus in the product, the terms $\partial^{|B|} f$ correspond to bounded derivatives of $f$ by the induction hypothesis.
				    \item  $\phi^{(q)}(a) = o(a)$ as $a \to 0$.
				\end{enumerate}
				The last step stems from the fact that the ratio has asbolute value less than $1$.
				This shows that the derivatives of order $m+1$ of $\phi \circ f$ are decreasing to $0$ as $j \to \infty$ and $\phi \circ f$ is actually $C^{m+1}$ and convex on $D$. The result follows by induction up to $m = k$ and by the fact that a  composition of increasing convex functions is increasing and convex.
\end{proof}

\begin{lemma}
                Let $p \colon \RR_+ \mapsto \RR_+$ be concave increasing and $C^1$ with $p' \geq c$ for some $c > 0$. Assume that there exists $ A > 0$ such that for all $x $ in $ \RR_+$
                \begin{align*}
                    p(x) - x p'(x) \leq A.
                \end{align*}
                Then setting $a= A/c$, we have for all $x \geq a$,
                \begin{align*}
                    p(x-a) - x p'(x-a) \leq 0
                \end{align*}
                \label{lem:techConcaveAsymptotic}
\end{lemma}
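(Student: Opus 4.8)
The plan is to reduce the claimed inequality to the standing hypothesis by a single change of variable. First I would set $y = x - a$; since the claim concerns $x \geq a$, this $y$ lies in $\RR_+$, which is exactly the range on which the hypothesis $p(\cdot) - (\cdot)\,p'(\cdot) \leq A$ is available. With this substitution the quantity to be controlled rewrites as
\begin{align*}
p(x-a) - x\,p'(x-a) = p(y) - (y+a)\,p'(y) = \bigl(p(y) - y\,p'(y)\bigr) - a\,p'(y).
\end{align*}

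Next I would bound the two pieces separately. The first bracket is at most $A$ by the assumption applied at the admissible point $y \in \RR_+$. For the second piece, the lower bound $p' \geq c$ together with the choice $a = A/c$ gives $a\,p'(y) \geq a c = A$. Subtracting, the whole expression is at most $A - A = 0$, which is the desired conclusion.

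There is essentially no obstacle here: the argument is a substitution plus two elementary estimates. The only point requiring a word of care is that the hypothesis must be invoked at $y = x-a$ rather than at $x$, and that the restriction $x \geq a$ is precisely what makes $y$ a legitimate argument; the concavity and monotonicity of $p$ are not needed for this statement (they are presumably relevant only at the point where the lemma is applied, e.g.\ to $p(x) = \langle n(\theta), G(\lambda,\theta)\rangle$ in Proposition~\ref{th:globallyLipshitz}).
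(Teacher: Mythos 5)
Your proof is correct and is essentially identical to the paper's: both apply the hypothesis at the shifted point $x-a$ and then use $p' \geq c$ together with $a = A/c$ to absorb the constant $A$. Your added remark that concavity and monotonicity are not needed for this particular inequality is accurate.
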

\begin{proof}
    For all $x \geq a$, we have
    \begin{align*}
        f(x-a) - (x-a)f'(x-a) \leq A,
    \end{align*}
 hence 
    \begin{align*}
        f(x-a) - xf'(x-a) \leq A - af'(x-a) \leq A - ac = 0.
    \end{align*}
\end{proof}

\noindent
{\bf Acknowledgements.} The authors acknowledge the support of AI Interdisciplinary Institute ANITI funding, through the French ``Investing for the Future -- PIA3'' program under the Grant agreement n°ANR-19-PI3A-0004, Air Force Office of Scientific Research, Air Force Material Command, USAF, under grant numbers FA9550-19-1-7026, FA9550-18-1-0226, and ANR MasDol. J. Bolte acknowledges the support of ANR Chess, grant ANR-17-EURE-0010 and ANR OMS.

\end{document}